\newtheorem{theorem}{Theorem}
\newtheorem{corollary}[theorem]{Corollary}
\newtheorem{proposition}[theorem]{Proposition}
\newtheorem{lemma}[theorem]{Lemma}
\newtheorem{exmp}[theorem]{Example}
\newtheorem{definition}[theorem]{Definition}
\newtheorem{remark}[theorem]{Remark}
\def\@begintheorem#1#2{\trivlist \item[\hskip \labelsep{\bf #1\ #2.}]\sl}
\renewenvironment{abstract}
 {\begin{center}\normalsize\textsc{Abstract}%
 \end{center}\begin{quote}\normalsize}
 {\end{quote}}
\renewcommand{\epsilon}{\varepsilon}
\newcommand{\xmark}{\ding{55}}%
\DeclareFontFamily{U}{mathx}{}
\DeclareFontShape{U}{mathx}{m}{n}{<-> mathx10}{}
\DeclareSymbolFont{mathx}{U}{mathx}{m}{n}
\DeclareMathAccent{\widecheck}{0}{mathx}{"71}
\title{{\sc\Large Bounds on the Distribution of a Sum of Two Random Variables:}\\
{\it\Large Revisiting a problem of Kolmogorov with application to\\[-8pt]
Individual Treatment Effects}} 
\date{\today}
\author{Zhehao Zhang and Thomas S. Richardson\\
University of Washington, Seattle \\
Department of Statistics}
\begin{document}

\maketitle
\thispagestyle{empty}

\begin{abstract}
\thispagestyle{plain}
\noindent
 We revisit the following problem, proposed by Kolmogorov: given prescribed marginal distributions $F$ and $G$ for random variables $X,Y$ respectively, characterize the set of compatible distribution functions for the sum $Z=X+Y$. Bounds on the distribution function for $Z$ were first given by \cite{makarov1982estimates} and \cite{ruschendorf1982random} independently. \cite{frank1987best} provided a solution to the same problem using copula theory. However, though these authors obtain the same bounds, they make different assertions concerning their sharpness. In addition, their solutions leave some open problems in the case when the given marginal distribution functions are discontinuous. These issues have led to some confusion and erroneous statements in subsequent literature, which we correct.

Kolmogorov's problem is closely related to inferring possible distributions for individual treatment effects $Y_1 - Y_0$  given the marginal distributions of $Y_1$ and $Y_0$; the latter being identified from a randomized experiment. We use our new insights to sharpen and correct the results due to \cite{fan2010sharp} concerning individual treatment effects, and to fill some other logical gaps.

\end{abstract}



\section{Introduction}
The question of the best possible bounds for the distribution function of the sum of two random variables whose individual distribution functions are fixed was originally raised by A.N. Kolmogorov and was first solved by \cite{makarov1982estimates} and (independently)  \cite{ruschendorf1982random}. Using copula theory \citep{sklar1959fonctions}, \cite{frank1987best} reframed this question and provided an elegant proof that the bounds were achievable in certain settings. 

Makarov-type bounds are widely studied in the
field of optimal transport, quantitative
risk management, and banking (see \citealt{puccetti2015extremal}, Example 4.2 and \citealt{puccetti2024beautiful}, Section 3 for a detailed discussion). Generalizations of the bounds to the sum of more than $2$ random variables or vector-valued random variables can be found in \cite{embrechts2013model} and \cite{li1996bounds}. Computational aspects of the bounds have been discussed in \cite{puccetti2024beautiful}, \cite{hofert2017improved}, \cite{puccetti2012computation} and \cite{kreinovich2006computing}. 

Notwithstanding these extensions, here we will focus on the original bivariate problem concerning the relationship of two scalar-valued random variables with fixed marginals. In this vein, \cite{williamson1990probabilistic} generalized the bounds in \cite{frank1987best} to other arithmetic operations of two random variables including subtraction, multiplication and division and claimed that these bounds are sharp. 
More recently, \cite{fan2010sharp} introduced the bounds in \cite{williamson1990probabilistic} into the context of causal inference and concluded sharp bounds on the distribution function of the additive treatment effect contrast (which corresponds to a difference between two random variables). The bounds proposed in \cite{fan2010sharp} have gained widespread traction in the literature on causal inference and  econometrics, in works such as \citealt{chiba2017sharp}, \citealt{huang2017inequality}, \citealt{lu2018treatment} and \citealt{mullahy2018individual}.

In this paper we revisit the connection between the construction of \cite{frank1987best} and Kolmogorov's original question. We distinguish between bounds that are {\em achievable} and those that although they provide an infimum or supremum -- and hence cannot be improved -- are {\em not attained} by {\em any} distribution. We characterize the circumstances under which the bounds are achievable. 

Building on \cite{makarov1982estimates} and \cite{frank1987best}, we formulate sharp bounds on the distribution function of the difference of two random variables with fixed marginals; we show how these differ from those that have appeared previously.
We further identify and address logical gaps in \cite{williamson1990probabilistic} that have propagated to some of the later literature, leading to incorrect or imprecise statements. In particular, for the distribution function of the difference of two random variables, the lower bound proposed by \cite{williamson1990probabilistic} is not sharp for measures that are not absolutely continuous with respect to the Lebesgue measure. We also identify an unnecessary exclusion in the argument given by \cite{williamson1990probabilistic}.  Finally, we apply the new bounds in the context of treatment effects and  calculate the bounds in an illustrative example.

In addition to the new results provided in the paper, we also hope to make the theorems in the existing literature more accessible. The prior results are stated in different papers that are hard to relate because there are at least four `dichotomies' that specify the nature of the bound and the probability (or feature of the distribution function) that is being bounded:
\begin{itemize}
\item[(i)] Upper bounds versus lower bounds;
\item[(ii)] Whether the `distribution function' of the sum is defined in terms of $P(Z<z)$ or $P(Z\leq z)$;
\item[(iii)] Whether stated bounds are on the left or right limits of the distribution function;
\item[(iv)] Whether a lower (upper) bound is considered to be `sharp' if it represents the infimum (supremum) of the attainable probabilities versus, in addition, requiring that the bound be {\em achievable} in that it is attained by a joint distribution with the specified margins.
\end{itemize}

These choices interact in that, for example, a bound on the left limit of `a distribution function evaluated at $z$' may correspond to a bound on $P(Z<z)$ or $P(Z\leq z)$, depending on which definition of a distribution function is used. 
Similarly, as we show in Section \ref{sec:sharpness} below, though all the bounds that we give are sharp in the sense of being supremums or infimums, only certain combinations are always achievable; in particular, there are cases where the bound on a left (right) limit is attainable, but the bound on the right (left) limit is not; see Figure \ref{fig:summary_ex11_12}. We also present new results giving general conditions on the margins under which all the bounds are achievable. 

The above distinctions can also be important in cases where, at first sight, it might not be expected. Obviously, if $Z=X+Y$ is a continuous random variable, then the left and right limits of its distribution function will agree (and hence distinctions (ii) and (iii) are moot). However, 
it is {\em not} the case, in general, that if $X$ and $Y$ are continuous random variables, then the distributions for $Z$ at which the bounds on the distribution function for $Z$ are attained will be continuous. As a consequence, even though $X$ and $Y$ may be continuous random variables, the bounds on the left and right limits of the distribution function for their sum, $Z$, may differ in terms of attainability.

As a further surprise, as we show in Theorem \ref{thm:discrete_ach}, if at least one of $X$ or $Y$ is a discrete random variable, then although the distributions for $Z$ that attain the bounds will be discontinuous,  in fact the upper and lower bounds on the left and right limits are all attainable.

In summary, we believe that the complexity within the existing literature, arising from these four dichotomies, may partly explain the origin of the suboptimal bounds stated by \cite{williamson1990probabilistic} for example. Table \ref{tab:literature} in Appendix \ref{app:table_prior} relates this paper to the notation, terminology, and results in prior works.

\medskip

The rest of the paper is organized as follows. In Section \ref{sec:sum}, we review the proof of the best possible bounds proposed in \cite{frank1987best}. In Section \ref{sec:sharpness}, we revisit the connection of \cite{frank1987best}'s result to Komogorov's question. We provide characterizations of when the bounds proposed in \cite{frank1987best} for the sum of two random variables are achievable. In Theorems \ref{thm:fns_exten_low} and \ref{thm:fns_exten_up} we summarize our new results and relate them to those of \cite{frank1987best}. Figure \ref{fig:summary_ex11_12}  displays the bounds arising from the illustrative Examples \ref{ex:one} and \ref{ex:two} considered in this section. In Section \ref{sec:frank}, we give the resulting bounds for the difference of two random variables whose individual distribution functions are fixed. We then discuss the implications for some of the results given previously by  \cite{williamson1990probabilistic}. In Section \ref{sec:fan}, we set up the problem of causal inference and revisit the bounds in \cite{fan2010sharp}. We demonstrate that with discrete random variables the previously stated bounds are not sharp.


\section{Bounds on the sum of two random variables}
\label{sec:sum}
Given marginal distribution functions $F,G$ for random variables $X,Y$ respectively, Kolmogorov's question (restated here in terms of right-continuous distribution functions) is to find functions $\underline{J}$ and $\overline{J}$ such that for all $z\in\mathbb{R}$,
\begin{align*}
    \underline{J}(z)&=\inf P(X+Y\leq z),\\
    \overline{J}(z)&=\sup P(X+Y\leq z),
\end{align*}
where the infimum and supremum are taken over all possible joint distribution functions $H(x,y)$ having the marginal cdfs $F(x)$ and $G(y)$. 

First, we review some existing results on probability distributions and copulas.

\begin{definition}\label{def:cdf}
Let $X$ be a random variable. The distribution function (or cumulative distribution function, cdf) $F$ of $X$ is defined to be $F(x)=P(X\leq x)$ for $x\in \mathbb{R}$.
\end{definition}
Note that under Definition $\ref{def:cdf}$, for any random variable $X$, $F(\cdot)$ is a right-continuous function. \cite{frank1987best} and \cite{williamson1990probabilistic} used a left-continuous version of the definition of distribution functions where they replace $P(X\leq x)$ with $P(X<x)$.\footnote{Some of the results we cited in this paper was originally defined in terms of the left-continuous functions $\widetilde{F}(x) = P(X<x)$ and $\widetilde{G}(x) = P(X<x)$ (which somewhat confusingly they also call ``distribution functions''). 
To be consistent in notation, $F,G$ in this paper always refer to the right-continuous distribution functions given by $F(x) = P(X\leq x)$ and likewise for $G$ and $Y$. We reserve  $\widetilde{F},\widetilde{G}$ when we need to talk about the functions given by $P(X<x)$ and $P(Y<y)$, equivalently the left hand limits of $F(x)$ and $G(y)$.}

\begin{definition}[\citealt{embrechts2013note}]\label{def:g_inv}
Let $X$ be a random variable with distribution function $F$. The generalized inverse (also known as the quantile function) $F^{-1}:[0,1]\to \overline{\mathbb{R}}=[-\infty, \infty]$ of $F$ is defined as:
$$F^{-1}(u)=\inf\{x\in\mathbb{R}, F(x)\geq u\}, u\in [0,1],$$
with $\inf\emptyset=\infty$.
\end{definition}
\begin{definition}
A two dimensional copula is a mapping $C$ from $[0,1]^2$ to $[0,1]$ satisfying the conditions:
\begin{enumerate}
\item $C(a,0)=C(0,a)=0$ and $C(a,1)=C(1,a)=a$, for all $a$ in $[0,1]$; 
\item $C(a_2,b_2)-C(a_1,b_2)-C(a_2,b_1)+C(a_1,b_1)\geq 0$ for all $a_1, a_2, b_1, b_2$ in $[0,1]$ such that $a_1\leq a_2, b_1\leq b_2$.
\end{enumerate}
\end{definition}
\begin{proposition}
According to the definition, any copula $C$ is nondecreasing in each argument, and, 
\begin{align*}
W(a, b) \leq C(a, b) \leq M (a, b)
\end{align*}
where
\[W(a, b) = \max (a + b- 1, 0),\quad M(a,b)=\min(a,b).\]
The bounds $W$ and $M$ are known as Fr\'{e}chet-Hoeffding copula bounds.
\end{proposition}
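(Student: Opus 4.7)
The plan is to verify the three claims (monotonicity in each argument, the upper bound by $M$, and the lower bound by $W$) using only the two defining properties of a copula: the boundary conditions in (1) and the 2-increasing property in (2). Each claim reduces to a careful choice of the rectangle corners $(a_1, a_2, b_1, b_2)$ that appear in condition (2), combined with the boundary values supplied by condition (1).

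First, I would establish that $C$ is nondecreasing in its first argument. Fix $0 \leq a_1 \leq a_2 \leq 1$ and any $b \in [0,1]$, and apply condition (2) with $b_1 = 0$, $b_2 = b$. Since condition (1) gives $C(a_1, 0) = C(a_2, 0) = 0$, the inequality collapses to $C(a_2, b) - C(a_1, b) \geq 0$, as required. The argument for the second coordinate is symmetric, exchanging the roles of the two arguments.

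Next, for the upper bound, I would combine monotonicity with the boundary values $C(a, 1) = a$ and $C(1, b) = b$ from condition (1): monotonicity in the second argument gives $C(a, b) \leq C(a, 1) = a$, and monotonicity in the first argument gives $C(a, b) \leq C(1, b) = b$, so $C(a, b) \leq \min(a, b) = M(a, b)$. For the lower bound, I would first apply condition (2) to the rectangle with corners $(a, b)$ and $(1, 1)$. Using $C(1, 1) = 1$, $C(a, 1) = a$, and $C(1, b) = b$, this yields $1 - a - b + C(a, b) \geq 0$, i.e., $C(a, b) \geq a + b - 1$. To see separately that $C(a, b) \geq 0$, I would apply condition (2) to the rectangle with corners $(0, 0)$ and $(a, b)$, using $C(0, 0) = C(a, 0) = C(0, b) = 0$. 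Combining the two lower estimates gives $C(a, b) \geq \max(a + b - 1, 0) = W(a, b)$.

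The result is classical and presents no genuine obstacle. The only thing to be careful about is the bookkeeping: choosing in each case the rectangle whose boundary corners are supplied by condition (1) in just the right way to collapse the four-term inequality of condition (2) into a two-term inequality involving $C(a, b)$ and the desired bound.
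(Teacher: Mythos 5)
Your proof is correct and is the standard argument; the paper states this proposition without proof (it is the classical Fr\'echet--Hoeffding result), and your derivation from the boundary conditions and the 2-increasing property is exactly the argument the paper implicitly relies on.
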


\begin{theorem}[Sklar 1959]
Consider a 2-dimensional cdf $H$ with marginals $F, G$. There exists a copula $C$, such that
$$
H(x,y)=C(F(x),G(y))
$$
for all $x,y$ in $[-\infty, \infty]$. If $F, G$ are both continuous, then $C$ is unique\footnote{Note that the condition here relates to the cdfs $F,G$ viewed as functions. We are not assuming that the random variables $X,Y$ are (absolutely) continuous (with respect to Lebesgue measure). The latter is a sufficient but not necessary condition for $F,G$ to be continuous.}; otherwise $C$ is uniquely determined only on $\operatorname{Ran} F\times \operatorname{Ran} G$, where $\operatorname{Ran} F,\operatorname{Ran} G$ denote respectively the range of the cdfs $F$ and $G$.
\end{theorem}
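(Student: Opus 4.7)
The plan is to construct $C$ first on the restricted domain $\operatorname{Ran} F \times \operatorname{Ran} G$, where its values are forced by the identity $H(x,y) = C(F(x),G(y))$, and then extend to all of $[0,1]^2$. Uniqueness on the restricted domain is automatic from the construction, while the full uniqueness claim under continuity of $F,G$ will come from a density argument.

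First I would set $C(u,v) := H(x,y)$ for any $(u,v) \in \operatorname{Ran} F \times \operatorname{Ran} G$ and any $x,y$ with $F(x)=u$, $G(y)=v$. Well-definedness is the only nontrivial check here: if $F(x_1)=F(x_2)$ for $x_1<x_2$, then $P(x_1<X\leq x_2)=0$ forces $H(x_1,y)=H(x_2,y)$ for every $y$, and symmetrically in $y$. The grounding at $0$, the marginal normalization at $1$ (obtained by sending $x$ or $y$ to $\pm\infty$, using $F(\pm\infty)\in\{0,1\}$), and the 2-increasing rectangle inequality then transfer directly from the analogous properties of $H$, so $C$ is a well-defined ``subcopula'' on the restricted domain that satisfies $H(x,y)=C(F(x),G(y))$ there by construction.

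Second I would extend $C$ to $[0,1]^2$. Using monotonicity I pass to one-sided limits along each coordinate to extend $C$ to $\overline{\operatorname{Ran} F}\times\overline{\operatorname{Ran} G}$, filling in the left-limit values at the jumps of $F$ and $G$. On each maximal open rectangle in the complement, whose four corners now carry fixed values, I would define $C$ by the unique bilinear function matching those corners. For uniqueness, when $F,G$ are both continuous the intermediate value theorem forces $\operatorname{Ran} F\supseteq(0,1)$ and likewise for $G$, so $\operatorname{Ran} F\times\operatorname{Ran} G$ is dense in $[0,1]^2$; since every copula is Lipschitz in each argument (a consequence of the 2-increasing property together with the boundary normalizations), density then pins $C$ down uniquely. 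In the discontinuous case the defining identity is silent off $\operatorname{Ran} F\times\operatorname{Ran} G$, so the bilinear interpolation is merely one valid extension among many.

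The main obstacle is verifying that the bilinear extension preserves the 2-increasing property on \emph{every} sub-rectangle of $[0,1]^2$, including ones whose corners straddle the interpolated regions and points of $\operatorname{Ran} F\times\operatorname{Ran} G$. The cleanest route is to observe that bilinear interpolation is a tensor-product construction, so 2-increasingness on any sub-rectangle reduces -- after decomposing along the interpolation grid -- to 2-increasingness of the underlying subcopula on the grid vertices, which is already established in the first step.
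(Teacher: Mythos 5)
The paper does not prove this statement: it is Sklar's theorem, quoted as a known result with citations to \cite{sklar1959fonctions}, \cite{embrechts2013note}, and \cite{schmidt2007coping}, so there is no in-paper proof to compare against. Your argument is, in substance, the standard subcopula-extension proof (as in Nelsen's Lemma 2.3.4--2.3.5 and the Schweizer--Sklar treatment), and it is essentially sound: the well-definedness step via $H(x_2,y)-H(x_1,y)=P(x_1<X\leq x_2,\,Y\leq y)\leq F(x_2)-F(x_1)$ is the right one; the forced values on $\operatorname{Ran} F\times\operatorname{Ran} G$ give uniqueness there; and the density-plus-Lipschitz argument correctly yields full uniqueness when $F,G$ are continuous (noting that $0,1\in\operatorname{Ran} F$ here because the domain includes $\pm\infty$, and $(0,1)\subseteq\operatorname{Ran} F$ by the intermediate value theorem). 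Two small points of care: first, the complement of $\overline{\operatorname{Ran} F}\times\overline{\operatorname{Ran} G}$ in $[0,1]^2$ is not a disjoint union of maximal open rectangles (it contains strips where only one coordinate lies in a gap), so the extension is better phrased as: for each $(a,b)$ interpolate bilinearly between the nearest points of $\overline{\operatorname{Ran} F}$ below and above $a$ and of $\overline{\operatorname{Ran} G}$ below and above $b$, which handles all cases uniformly. Second, you correctly identify the real work as verifying 2-increasingness of the extension on arbitrary sub-rectangles; the grid-decomposition you sketch does close this, since the $C$-volume of any rectangle becomes a combination of subcopula volumes with coefficients that are products of interpolation weights in $[0,1]$, hence nonnegative. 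With those clarifications your proposal is a correct proof of the cited theorem.
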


See \cite{sklar1959fonctions}, \cite{embrechts2013note}, \cite{schmidt2007coping} for more discussion of general $n$-dimensional copulas and Sklar's Theorem. \\

 Now we want to bound the cdf of the sum of two random variables using copulas.  We reprise the argument given by \cite{frank1987best} which 
 gives lower bounds on $P(X+Y<z)$ and upper bounds on $P(X+Y\leq z)$, and further establishes by construction that these are {\em achievable}.
 
 Let $H$ be a two-dimensional cumulative distribution function for random variables $X,Y$ with marginals $F,G$ respectively so that $H(x,y)=P(X\leq x,Y\leq y)$. By Sklar's theorem \citep{sklar1959fonctions}, there exists a copula $C$ such that $H(x,y)=C(F(x),G(y))$. Note that the cdf for the sum of two random variables $X,Y$ is fully characterized by their joint cdf $H$. Let $Z:=X+Y$ and $J$ be the cdf for $Z$. Then for any $z\in [-\infty,\infty]$,
\begin{align*}
J(z)=\iint_{x+y\leq z}dH(x,y).
\end{align*}
For the copula $C$ and marginal distribution functions $F, G$, let $\sigma_C(F,G)$ be the function defined by $\sigma_C(F,G)(-\infty)=0, \sigma_C(F,G)(\infty)=1$ and 
$$
\sigma_{C}(F, G)(z)=\iint_{x+y\leq z} d C(F(x), G(y)), \quad \text { for }-\infty<z<\infty.
$$
Since $H(x,y)=C(F(x),G(y))$, $\sigma_{C}(F, G)(z)=J(z)$ for all $z\in [-\infty,\infty]$.
We let
\begin{align*}
\tau_{C}(F, G)(z)&=\sup _{x+y=z} C(F(x), G(y)),\\
\rho_{C}(F, G)(z)&=\inf _{x+y=z} \widecheck{C}(F(x), G(y)),
\end{align*}
where
\[\widecheck{C}(a, b)=a + b-C(a, b).\]
\begin{theorem}[\citeauthor{frank1987best} Theorem 2.14]\label{Thm:franks}
We have the following bounds for any copula $C$ and arbitrary given distribution functions $F,G$ and $z\in[-\infty,\infty]$:
$$
\tau_{W} (F,G)(z) \leq \tau_{C}(F,G)(z) \leq \sigma_{C}(F,G)(z) \leq \rho_{C}(F,G)(z) \leq \rho_{W}(F,G)(z).\footnote{Proof of these bounds and visualizations can be found in \cite{frank1987best}, \cite{nelsen2006introduction}. Here only $\sigma_C$ can be interpreted as the probability that $Z\leq z$. Therefore, there are no explicit constructions establishing whether or not the bounds on $\tau_W, \rho_W$ can be achieved.}
$$
\end{theorem}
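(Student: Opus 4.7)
The plan is to prove the chain of five inequalities by splitting it into three parts: the two middle inequalities $\tau_C \leq \sigma_C \leq \rho_C$, which are the probabilistic heart of the argument, and the two outer inequalities $\tau_W \leq \tau_C$ and $\rho_C \leq \rho_W$, which are immediate consequences of the Fr\'{e}chet-Hoeffding bound $W \leq C \leq M$. The boundary cases $z = \pm\infty$ are handled by the convention $\sigma_C(F,G)(\pm\infty) \in \{0,1\}$, so I would treat only $z \in \mathbb{R}$ in detail.

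First I would fix a copula $C$ and marginals $F,G$, and let $(X,Y)$ have the joint cdf $H(x,y) = C(F(x),G(y))$ guaranteed by Sklar's theorem. By construction, $C(F(x),G(y)) = P(X \leq x, Y \leq y)$ and $\sigma_C(F,G)(z) = P(X+Y \leq z)$. For the lower middle inequality $\tau_C(F,G)(z) \leq \sigma_C(F,G)(z)$, I would observe that whenever $x+y = z$, the event $\{X \leq x, Y \leq y\}$ is contained in $\{X + Y \leq z\}$, so
\[
C(F(x),G(y)) = P(X \leq x, Y \leq y) \leq P(X+Y \leq z) = \sigma_C(F,G)(z),
\]
and then take the supremum over $\{x+y=z\}$ on the left.

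For the upper middle inequality $\sigma_C(F,G)(z) \leq \rho_C(F,G)(z)$, the key set-theoretic observation is the dual containment: if $x+y = z$ and neither $X \leq x$ nor $Y \leq y$ holds, then $X > x$ and $Y > y$ imply $X+Y > z$. Hence
\[
\{X+Y \leq z\} \subseteq \{X \leq x\} \cup \{Y \leq y\}.
\]
Applying inclusion-exclusion and the definition of $\widecheck{C}$ gives
\[
\sigma_C(F,G)(z) \leq P(X \leq x) + P(Y \leq y) - P(X \leq x, Y \leq y) = \widecheck{C}(F(x),G(y)),
\]
and taking the infimum over $\{x+y=z\}$ yields $\sigma_C(F,G)(z) \leq \rho_C(F,G)(z)$.

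Finally, the outer inequalities follow pointwise from the Fr\'{e}chet-Hoeffding bound. Since $W(a,b) \leq C(a,b)$ for all $(a,b) \in [0,1]^2$, evaluating at $(F(x),G(y))$ and taking $\sup_{x+y=z}$ of both sides gives $\tau_W(F,G)(z) \leq \tau_C(F,G)(z)$. For the upper end, the same inequality $W \leq C$ rearranges to $\widecheck{C}(a,b) = a+b-C(a,b) \leq a+b-W(a,b) = \widecheck{W}(a,b)$, so taking $\inf_{x+y=z}$ yields $\rho_C(F,G)(z) \leq \rho_W(F,G)(z)$. The main (mild) obstacle, as I see it, is not in any single step but in making clear at the outset that $C(F(\cdot),G(\cdot))$ may be interpreted as a genuine joint cdf, so that the set-containment arguments driving the middle two inequalities are legitimate; once that probabilistic dictionary is in place, the remaining work is purely monotonicity of $\sup$ and $\inf$.
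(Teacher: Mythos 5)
Your proof is correct, and it is essentially the standard argument: the paper itself does not prove Theorem \ref{Thm:franks} but defers to \cite{frank1987best} and \cite{nelsen2006introduction}, whose proof rests on exactly the two set containments you identify, namely $\{X\leq x, Y\leq y\}\subseteq\{X+Y\leq z\}$ and $\{X+Y\leq z\}\subseteq\{X\leq x\}\cup\{Y\leq y\}$ for $x+y=z$, together with the pointwise Fr\'{e}chet--Hoeffding inequality $W\leq C$ for the outer bounds. Your explicit flagging of the one genuinely load-bearing step --- that Sklar's theorem lets you read $C(F(\cdot),G(\cdot))$ as a bona fide joint cdf so that $\sigma_C(F,G)(z)=P(X+Y\leq z)$ --- is exactly the right point to emphasize.
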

Note that $\tau_W, \rho_W$ are known functions that depend solely on the marginal cdfs $F$ and $G$.
We obtain the following bounds:
\begin{align*}
\tau_{W} (F,G)(z)\leq \sigma_{C}(F,G)(z)\leq \rho_{W}(F,G)(z).
\end{align*}
Here we will write the bounds $\tau_W, \rho_W$ explicitly. 
\begin{align}
    \tau_W(F,G)(z)&=\sup_{x+y=z}\max(F(x)+G(y)-1,0); \label{eq:low_valid}\\
    \rho_W(F,G)(z)&=\inf_{x+y=z}\left(F(x)+G(y)-\max(F(x)+G(y)-1,0)\right)\\
    &=\inf_{x+y=z}\left(F(x)+G(y)+\min(-F(x)-G(y)+1,0)\right)\\
    &=\inf_{x+y=z}\min(1,F(x)+G(y))\\
    &=1+\inf_{x+y=z}\min(0,F(x)+G(y)-1). \label{eq:up_valid}
\end{align}

Theorem \ref{Thm:franks} establishes the validity of the bounds in Equation (\ref{eq:low_valid}) and (\ref{eq:up_valid}). Thereafter, we call the bounds $\tau_W(F,G)(z)$ and $\rho_W(F,G)(z)$ the Makarov bounds
on $P(X+Y \leq z)$. \cite{frank1987best} shows that these bounds are equivalent to those given in \cite{makarov1982estimates}, which is the first paper to prove the bounds. Independently, \cite{ruschendorf1982random} also proves similar bounds and relates the bounds to the literature on infimal convolution \citep{rockafellar1997convex}.\footnote{In Appendix \ref{app:ruschendorf_vs_makarov}, we discuss the relationship between the bounds in \cite{ruschendorf1982random} and the Makarov bounds presented in this section.}
\section{Sharpness of the bounds}\label{sec:sharpness}
 To investigate the tightness of the bounds, we first distinguish three notions of sharpness.

For two random variables $X,Y$ with fixed marginals $F,G$, respectively, let $J(\cdot)$ be the distribution function of $X+Y$ and let 
$ J_{\ell}(\cdot )$ and $J^u(\cdot )$ be bounding functions such that
$ J_{\ell}(z)\leq J(z)\leq  J^u(z)$ for all $z\in\mathbb{R}$.
\begin{definition}[Achievability at a point]
    We say the lower bound $J_{\ell}(\cdot)$ is \textup{achievable at $z=z_0$} if there exists a joint distribution $H$ of $X,Y$ satisfying the marginals such that under $H$, $J(z_0)=J_{\ell}(z_0)$. The upper bound $J^u(\cdot)$ is achievable at $z=z_0$ if there exists a joint distribution $H$ of $X,Y$ satisfying the marginals such that under $H$, $J(z_0)=J^u(z_0)$.
\end{definition}
\begin{definition}[Pointwise Best-Possible]\label{def:best_p}
       We say the lower bound $J_{\ell}(\cdot)$ is \textup{pointwise best-possible} if for all $z_0\in \mathbb{R}$ and $\epsilon>0$, $J_{\ell}(z_0)+\epsilon$ will not be a valid lower bound for $J(z_0)$. In other words, for all $z_0\in \mathbb{R}$ and $\epsilon>0$, there exists a joint distribution $H$ of $X,Y$ satisfying the marginals such that under $H$, $J(z_0)< J_{\ell}(z_0)+\epsilon$. The upper bound $J^u(\cdot)$ is pointwise sharp if for all $z_0\in \mathbb{R}$ and $\epsilon>0$, $J^u(z_0)-\epsilon$ will not be a valid lower bound for $F(z_0)$.\footnote{Unlike \cite{firpo2019partial}, we differentiate between achievability and pointwise sharpness because a bound can be pointwise sharp but not necessarily achievable.}
\end{definition}
\begin{definition}[Uniformly Sharp]
    We say the lower bound $J_{\ell}(\cdot)$ is \textup{uniformly sharp of $H$} if there exists a single joint distribution $H$ of $X,Y$ satisfying the marginals such that under $H$, $J(z)=J_{\ell}(z)$ for all $z\in \mathbb{R}$. The upper bound $J^u(\cdot)$ is uniformly sharp if there exists a single joint distribution $H$ of $X,Y$ satisfying the marginals such that under $H$, $J(z)=J^u(z)$ for all $z\in \mathbb{R}$.
\end{definition}

Following these definitions, if a bound is uniformly sharp, then it is achievable for all $z\in \mathbb{R}$ and also pointwise sharp. If a bound is achievable for all $z\in \mathbb{R}$, then it is pointwise sharp. However, a pointwise sharp bound may not be achievable for all $z\in \mathbb{R}$.

\subsection{Prior results on achievability of bounds}
We first state a theorem given in \cite{nelsen2006introduction}.\footnote{\cite{frank1987best} uses an example of a degenerate distribution to show that $\tau_W$ and $\rho_W$ can be achieved for certain $F,G$ (where the bounds are uniformly sharp in the example). However, it is not clear that for arbitrary $F,G$, Kolmogorov's question is answered.} 
\begin{theorem}[\citeauthor{frank1987best}  Theorem 3.2, \citeauthor{nelsen2006introduction} Theorem 6.1.2]\label{thm:optimal}
Let $F$ and $G$ be two fixed distribution functions. For any $z\in(-\infty, \infty)$:\\
(i) There exists a copula $C_{t}$, dependent only on the value $t$ of $\tau_W(F, G)$ at $z-$, such that
\begin{align}
    \sigma_{C_{t}}(F, G)(z-)=\tau_W(F, G)(z-)=t,\label{eq:frank-lower}
\end{align}
where $z-$ is the left hand limit of the functions $
\sigma_{C_{t}}$ and $\tau_W$ as they approach $z$.\footnote{\cite{frank1987best} state their result in terms of $\widetilde{F}(z) = P(Z\!<\!z)$,
which is left-continuous. When translating the result, we need to be careful about the implications of this notational difference. In general, $\tau_W(\widetilde{F},\widetilde{G})(z-)\leq \tau_W(\widetilde{F},\widetilde{G})(z)= \tau_W(F,G)(z-)\leq \tau_W(F,G)(z)$ and $\rho_W(\widetilde{F},\widetilde{G})(z-)\leq \rho_W(\widetilde{F},\widetilde{G})(z)= \rho_W(F,G)(z-)\leq \rho_W(F,G)(z)$. See Appendix \ref{app:relation_left_right_cdf_bounds} where we prove these results.}

\noindent(ii) There exists a copula $C_r$, dependent only on the value $r$ of $\rho_W(F, G)(z)$, such that
\begin{align*}
   \sigma_{C_r}(F, G)(z)=\rho_W(F, G)(z)=r.
\end{align*}

\end{theorem}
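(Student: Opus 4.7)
The plan is to prove both parts by explicitly constructing the extremal copulas $C_t$ and $C_r$. The key observation is that the Makarov bounds $\tau_W(F,G)(z-)$ and $\rho_W(F,G)(z)$ are themselves suprema/infima over the antidiagonal $\{(x,y):x+y=z\}$, so by picking a witness pair $(x_0,y_0)$ (or a sequence of such pairs when the extremum is not attained), the problem reduces to designing a coupling that, in quantile coordinates $(u,v)\in[0,1]^2$, concentrates the appropriate mass on rectangles of the form $[0,F(x_0)]\times[0,G(y_0)]$.

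For part (i), I would select $(x_n,y_n)$ with $x_n+y_n<z$ such that $\max(F(x_n)+G(y_n)-1,\,0)\to t:=\tau_W(F,G)(z-)$. The copula $C_t$ is then constructed as an anti-comonotone ``shuffle'': all mass is placed on two antidiagonal segments, one inside $[0,t]\times[1-t,1]$ (pairing low $U$ with high $V$) and another inside $[t,1]\times[0,1-t]$ (pairing high $U$ with low $V$). Under this coupling, the preimage under the quantile transforms $F^{-1},G^{-1}$ of the event $\{X+Y<z\}$ is a subset of $[0,1]^2$ of Lebesgue measure exactly $t$, yielding $\sigma_{C_t}(F,G)(z-)\leq t$. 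The matching lower bound $\sigma_{C_t}(F,G)(z-)\geq \tau_W(F,G)(z-)=t$ then comes for free from Theorem \ref{Thm:franks}. Because the construction is specified entirely in quantile coordinates by the single threshold $t$, the resulting copula depends only on $t$.

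Part (ii) proceeds by a symmetric construction: for $r=\rho_W(F,G)(z)$, pick $(x_n,y_n)$ with $x_n+y_n=z$ such that $\min(F(x_n)+G(y_n),\,1)\to r$, then put all copula mass on two antidiagonal segments arranged so that the preimage of $\{X+Y\leq z\}$ occupies measure exactly $r$ in $[0,1]^2$. Again the upper Makarov bound from Theorem \ref{Thm:franks} pins down equality, and the construction is parametrized solely by $r$.

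The main obstacle, I expect, is the case where the sup in $\tau_W$ (resp.\ inf in $\rho_W$) is \emph{not} attained at any pair $(x_0,y_0)$; this is precisely why part (i) is stated in terms of the left limit at $z-$. When $F$ or $G$ has jumps or flat regions, one must pass to a weak limit of the constructions $C_{t_n}$ as $t_n\to t$. This requires (a) invoking tightness of the set of couplings with fixed marginals $F,G$ to extract a weak subsequential limit that is itself a valid copula, and (b) arguing that $\sigma_{\,\cdot\,}(F,G)(z-)$ is well-behaved under the limit, which is delicate because $\{X+Y<z\}$ is open and hence weak convergence gives only lower semicontinuity of its probability. A secondary technical nuisance is checking the 2-increasing property of the piecewise copula on rectangles that straddle the boundaries of the two mass-bearing segments, which reduces to a finite case analysis.
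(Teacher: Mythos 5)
Your overall strategy -- exhibit an explicit extremal copula parametrized by the scalar bound value and let Theorem \ref{Thm:franks} supply the reverse inequality -- is the right one, and it is the approach behind the cited result. But the construction you describe is not the correct copula, and as written it fails. The antidiagonal of $[0,t]\times[1-t,1]$ is the segment from $(0,1)$ to $(t,1-t)$ and the antidiagonal of $[t,1]\times[0,1-t]$ is the segment from $(t,1-t)$ to $(1,0)$; both lie on the single line $u+v=1$, so your ``$C_t$'' is just the Fr\'echet--Hoeffding lower bound $W$ for every $t$. That copula neither depends on $t$ nor achieves the bound: take $F=G=\mathrm{Unif}[0,1]$ and $z=3/2$, so $t=\tau_W(F,G)(z-)=1/2$; under $W$ we have $Y=1-X$, hence $X+Y\equiv 1$ and $P(X+Y<3/2)=1\neq 1/2$. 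The correct construction (written out in this paper in the proof of Theorem \ref{thm:discrete_ach}, following \cite{frank1987best} and \cite{nelsen2006introduction}) is
\[
C_t(u,v)=\begin{cases}\max(u+v-1,\,t), & (u,v)\in[t,1]^2,\\ \min(u,v), & \text{otherwise,}\end{cases}
\]
whose mass sits on the \emph{comonotone} diagonal $\{u=v\le t\}$ (mass $t$) and the antidiagonal $\{u+v=1+t,\ t\le u\le 1\}$ (mass $1-t$) -- i.e.\ the second segment lives in $[t,1]^2$ on the line $u+v=1+t$, not in $[t,1]\times[0,1-t]$ on $u+v=1$. The substantive step you would then need, and which is missing from your sketch, is to show that every $(u,v)$ with $u+v=1+t$ satisfies $F^{-1}(u)+G^{-1}(v)\ge z$; this follows from $t$ being the supremum of $F(x)+G(y)-1$ over $x+y=z$, and it is exactly what forces $P(X+Y<z)\le t$ under $C_t$. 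Part (ii) has the mirror-image issue: $C_r$ places mass $r$ on the antidiagonal $u+v=r$ inside $[0,r]^2$ and mass $1-r$ on the comonotone diagonal of $[r,1]^2$.

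Your anticipated ``main obstacle'' is also a misdiagnosis. No witness pair $(x_0,y_0)$, approximating sequence, or weak-compactness/Prokhorov argument is needed here: the copula depends only on the number $t$, which is precisely the content of the phrase ``dependent only on the value $t$'' in the statement. (A tightness argument of the kind you describe does appear in this paper, but only in Appendix \ref{App:C}, as an alternative non-constructive route to achievability of the upper bound.) The reason part (i) is stated at $z-$ is not that the supremum defining $\tau_W$ may fail to be attained; it is that the antidiagonal mass of $C_t$ can land exactly on the set $\{x+y=z\}$, so the construction controls $P(X+Y<z)$ but not $P(X+Y\le z)$ -- the gap between these two being the central theme of Section \ref{sec:sharpness}.
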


Theorem \ref{thm:optimal} rephrases the result of \cite{frank1987best} in terms of the more common definition of right continuous distribution functions. The proof of Theorem \ref{thm:optimal} is sketched in  \cite{nelsen2006introduction}. \cite{embrechts2002correlation} Theorem 5 also sketches the proof for the lower bound (\ref{eq:frank-lower}).

Kolmogorov's question concerns the pointwise sharp bounds on the distribution function for the sum of two random variables. Statement (ii) of Theorem \ref{thm:optimal} along with Sklar's Theorem shows that the Makarov upper bound (\ref{eq:up_valid}) 
is achievable for all $z\in \mathbb{R}$ and thus is pointwise sharp. The achievability of the Makarov upper bounds can also be proved using a result in \cite{ruschendorf1983solution}. We provide a complete proof based on \cite{ruschendorf1983solution}'s argument in Appendix \ref{App:C}. We will next consider two examples which
show that statement (i) of Theorem \ref{thm:optimal} does not completely address Kolmogorov's question.
\begin{exmp}\label{ex:one}
    Let $X,Y$ be Bernoulli random variables with $p_1=0.5$ and $p_2=0.4$. Let $F,G$ be the distribution functions of $X,Y$ respectively. Suppose that we are interested in obtaining a lower bound for $P(X+Y\leq 1)$. Clearly, $\tau_W(F,G)(1)=0.6$ while $\tau_W(F,G)(1-)=0.1$. Statement $(i)$ of Theorem \ref{thm:optimal} does not provide lower bound $\tau_W(F,G)(1)$ on $P(X+Y\leq 1)=J(1)$.
\end{exmp}
\begin{exmp}[\citealp{nelsen2006introduction}] \label{ex:two}
    Let $X,Y$ be random variables with uniform distributions on $[0,1]$. Let $F,G$ be distribution functions of $X,Y$ respectively. Suppose that we wish to obtain a lower bound for $P(X+Y\leq 1)$. In this example, $\tau_W(F,G)(1)=\tau_W(F,G)(1-)=0$. We obtain the lower bound for $J(z)=P(X+Y\leq 1)=0$. Statement $(i)$ of Theorem \ref{thm:optimal} tells us that there exists a joint distribution such that $P(X+Y<1)=0$. In fact, the construction in the proof of Theorem \ref{thm:optimal}, given by \citeauthor{nelsen2006introduction}
    will result in $P(X+Y\leq 1)=1$ and $P(X+Y<1)=0$ when $X=1-Y$. However, this leaves open the question of whether $\tau_W(F,G)(1)=0$ is a tight bound on $P(X+Y\leq 1)$, either in the sense of being achievable, or more weakly, best possible.
\end{exmp}

 To summarize, statement $(i)$ of Theorem \ref{thm:optimal} leaves two questions unanswered: First, as shown in 
 Example \ref{ex:one}, when $F,G$ are not continuous, $\tau_W(F,G)(z-)$ can be different from $\tau_W(F,G)(z)$. Thus when $F$
 and $G$ are not continuous Theorem \ref{thm:optimal}
 $(i)$ does not provide any information regarding whether the bound $\tau_W(F,G)(z)$ is either achievable or best possible vis a vis $J(z)=P(X+Y\leq z)$. Second, even when $F,G$ are continuous, so that
 $\tau_W(F,G)(z-)= \tau_W(F,G)(z)$, 
 it is still possible that $\sigma_{C_t}(F,G)(z-)<\sigma_{C_t}(F,G)(z)$. Consequently, even in the continuous case, the existence of the copula $C_t$ given in
 statement $(i)$ of Theorem \ref{thm:optimal}
merely establishes that $\tau_W(F,G)(z-)$ is achievable as a lower bound
on $P(X+Y<z)$; it says nothing about the achievability (or otherwise) of  $\tau_W(F,G)(z)$ as a lower bound on $J(z)$. 

\subsection{When are the bounds achievable?}
 
 The next Theorem establishes that 
 that for all $F$, $G$ (possibly discontinuous)
 $\tau_W(F,G)(z)$ is best possible -- thus addressing the unanswered question of Kolmogorov.
 We will return to the issue of achievability of $\tau_W(F,G)(z)$ as a lower bound on $P(X+Y\leq z)$, 
in Theorems \ref{thm:discrete_ach} (discrete case) to \ref{thm:when_achi} (general case) below.

\begin{theorem}[No loose end to Kolmogorov's question]\label{thm:no_loose_end}
Let $F$ and $G$ be two fixed distribution functions. For any $z\in(-\infty, \infty)$, let $s=\tau_W(F, G)(z)$. For any $\epsilon>0 $, there exists a copula $C_{s,\epsilon}$ such that
$$
\sigma_{C_{s,\epsilon}}(F, G)(z)< s+\epsilon .
$$
In other words, the lower bound $\tau_W(F,G)(z)$ on $J(z)$ cannot be improved for any $F,G$, and is thus pointwise best possible in terms of $z$.
\end{theorem}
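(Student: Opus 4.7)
My plan is to apply Theorem~\ref{thm:optimal}(i) not at $z$ itself but at the slightly shifted point $z+\delta$, and then transfer the conclusion back to $z$ using that $\sigma_C$ is a (right-continuous, non-decreasing) cdf. Specifically, for each $\delta>0$, Theorem~\ref{thm:optimal}(i) furnishes a copula $C_\delta$ with $\sigma_{C_\delta}(F,G)((z+\delta)-)=\tau_W(F,G)((z+\delta)-)$, so that by monotonicity of $\sigma_{C_\delta}(F,G)$ and $\tau_W(F,G)$,
\[\sigma_{C_\delta}(F,G)(z)\;\leq\;\sigma_{C_\delta}(F,G)((z+\delta)-)\;=\;\tau_W(F,G)((z+\delta)-)\;\leq\;\tau_W(F,G)(z+\delta).\]
It will then be enough to show that $\tau_W(F,G)(z+\delta)\downarrow s=\tau_W(F,G)(z)$ as $\delta\downarrow 0$, i.e., that $\tau_W(F,G)(\cdot)$ is right-continuous at $z$. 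Given that, for any $\epsilon>0$ I pick $\delta>0$ small enough that $\tau_W(F,G)(z+\delta)<s+\epsilon$ and set $C_{s,\epsilon}:=C_\delta$.

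\textbf{The hard part.} The right-continuity of $\tau_W(F,G)$ is not spelled out in Frank's argument and will be the substantive step. I plan to argue by contradiction. If $\tau_W(F,G)(z+)\geq s+\eta$ for some $\eta>0$, I can choose a sequence $z_n\downarrow z$ together with decompositions $x_n+y_n=z_n$ satisfying $F(x_n)+G(y_n)-1\geq s+\eta/2$. Because $F,G\leq 1$, this forces both $F(x_n)$ and $G(y_n)$ to exceed $s+\eta/2$, and hence $x_n\geq F^{-1}(s+\eta/2)$ and $y_n\geq G^{-1}(s+\eta/2)$; combined with $x_n+y_n\leq z+1$ this confines $(x_n,y_n)$ to a bounded rectangle. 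Passing to a subsequence, $x_{n_k}\to x^*$ and $y_{n_k}\to y^*:=z-x^*$, both finite.

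The key input is that right-continuity of the cdfs $F,G$ makes them upper semi-continuous, i.e.\ $\limsup_k F(a_k)\leq F(a)$ whenever $a_k\to a$: terms with $a_k>a$ converge to $F(a)$ by right-continuity, while terms with $a_k\leq a$ are bounded by $F(a-)\leq F(a)$. Applying this coordinatewise,
\[1+s+\eta/2\;\leq\;\limsup_{k}\bigl(F(x_{n_k})+G(y_{n_k})\bigr)\;\leq\;F(x^*)+G(y^*).\]
But $(x^*,y^*)$ is a valid decomposition of $z$, so the right-hand side minus $1$ is at most $\tau_W(F,G)(z)=s$, contradicting the previous display. This establishes right-continuity of $\tau_W(F,G)$ at $z$, completing the argument.
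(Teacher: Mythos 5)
Your proposal is correct and follows essentially the same route as the paper: apply Theorem~\ref{thm:optimal}(i) at a shifted point $z+\delta$, chain $\sigma_{C_\delta}(F,G)(z)\leq\sigma_{C_\delta}(F,G)((z+\delta)-)=\tau_W(F,G)((z+\delta)-)\leq\tau_W(F,G)(z+\delta)$, and invoke right-continuity of $\tau_W(F,G)$ to push $\tau_W(F,G)(z+\delta)$ below $s+\epsilon$. The only difference is that the paper delegates the right-continuity step to Lemma~\ref{lemma: rc_c} (cited from prior work), whereas you supply a complete and correct compactness/upper-semicontinuity proof of it, which is a welcome addition rather than a deviation.
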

Before proving Theorem \ref{thm:no_loose_end}, we first state a useful lemma.
\begin{lemma}[\citeauthor{firpo2019partial} Theorem 2]\label{lemma: rc_c}
For fixed distribution functions $F,G$, the function
$$\tau_W(F,G)(z)=\sup_{x+y=z}\max\{F(x)+G(y)-1,0\}$$
is right continuous and non-decreasing for all $z$.
\end{lemma}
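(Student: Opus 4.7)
The plan is to handle the two claims separately. For monotonicity, reparametrize $y = z-x$ to write $\tau_W(F,G)(z) = \sup_{x \in \mathbb{R}} \max\{F(x) + G(z-x) - 1, 0\}$. For each fixed $x$, $z \mapsto F(x) + G(z-x)$ is non-decreasing in $z$ because $G$ is non-decreasing; applying $\max\{\cdot, 0\}$ and then $\sup_x$ preserves this, so $\tau_W(F,G)(\cdot)$ is non-decreasing.

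For right continuity, define $T(z) := \sup_{x \in \mathbb{R}}[F(x) + G(z-x)] - 1$, so that $\tau_W(F,G)(z) = \max\{0, T(z)\}$. Since taking the positive part preserves right continuity, it suffices to show $T$ is right continuous. Being non-decreasing, $T$ has a right limit $T(z+) \geq T(z)$, and I argue by contradiction that $\delta := T(z+) - T(z) > 0$ is impossible. Pick $z_n \downarrow z$, so that $T(z_n) \geq T(z+) = T(z) + \delta$, and for each $n$ pick $x_n \in \mathbb{R}$ with $F(x_n) + G(z_n - x_n) > T(z_n) + 1 - 1/n$. For all sufficiently large $n$,
\[
F(x_n) + G(z_n - x_n) > T(z) + 1 + \delta/2. \quad (\star)
\]

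Next, pass to subsequences of $\{x_n\}$. If some subsequence $x_{n_k}$ converges to an $x^* \in \mathbb{R}$, set $y^* := z - x^*$, so $y_{n_k} := z_{n_k} - x_{n_k} \to y^*$. Using right continuity and monotonicity of $F$: for any $\varepsilon > 0$, $x_{n_k} < x^* + \varepsilon$ eventually, hence $F(x_{n_k}) \leq F(x^* + \varepsilon)$; letting $\varepsilon \downarrow 0$ and applying right continuity at $x^*$ gives $\limsup_k F(x_{n_k}) \leq F(x^*)$, and analogously $\limsup_k G(y_{n_k}) \leq G(y^*)$. Summing and using $x^* + y^* = z$ yields $F(x^*) + G(y^*) - 1 \leq T(z)$, contradicting $(\star)$. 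Otherwise $|x_{n_k}| \to \infty$ along some further subsequence; then one of $F(x_{n_k}), G(z_{n_k} - x_{n_k})$ tends to $0$ and the other to $1$, so $F(x_{n_k}) + G(z_{n_k} - x_{n_k}) \to 1$. Combined with $T(z) \geq 0$ (which follows from letting $x \to \pm\infty$ in the defining supremum of $T(z)+1$), this forces $1 \geq T(z) + 1 + \delta/2 \geq 1 + \delta/2$, again a contradiction.

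The main obstacle is the unbounded case: near-maximizing sequences $\{x_n\}$ may escape to $\pm\infty$, and one cannot simply invoke compactness. This is resolved by the explicit observation that $F(x) + G(z-x)$ approaches exactly $1$ (never more) at the two corners $\pm\infty$, so any escaping subsequence cannot witness a value exceeding $T(z) + 1$. The bounded case is then a routine $\limsup$ argument, enabled by the conjunction of right continuity and monotonicity of the marginals.
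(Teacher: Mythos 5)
Your proof is correct. The paper itself does not prove this lemma: it cites \citeauthor{firpo2019partial} and disposes of it in one sentence (``follows from the fact that $F,G$ are right continuous and nondecreasing''), so your argument is a genuine filling-in rather than a parallel to an existing proof. The monotonicity half and the reduction $\tau_W(F,G)(z)=\max\{T(z),0\}$ are exactly the route that one-liner gestures at. The part that actually requires care --- and that you handle correctly --- is right continuity of $T$: the $\limsup$ bound $\limsup_k F(x_{n_k})\leq F(x^*)$ genuinely needs \emph{right} continuity together with monotonicity (approaching $x^*$ from below is where a jump of $F$ could otherwise break the argument, and your $\varepsilon$-device sidesteps this), and the escaping-subsequence case is correctly closed by the observation that $F(x)+G(z-x)\to 1$ at $\pm\infty$ while $T(z)\geq 0$. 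One could shorten the contradiction by choosing, for each fixed $\eta>0$, a single $x_0$ with $F(x_0)+G(z-x_0)>T(z+)+1-\eta$ valid for all $z_n$ simultaneously is not available, so the subsequence extraction you use is essentially unavoidable; the proof is sound as written.
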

Lemma $\ref{lemma: rc_c}$ follows from the fact that $F,G$ are right continuous and nondecreasing. 
\begin{proof}

The proof of Theorem \ref{thm:no_loose_end} uses a similar construction as in \cite{makarov1982estimates}.
Now suppose that for a given $z$, $\tau_W(F,G)(z)=s$. Following Lemma $\ref{lemma: rc_c}$, for all $\epsilon>0$, there exists $\delta>0$ such that $\tau_W(F,G)(z+m)-s<\epsilon$ for all $0<m<\delta$. Pick any $0<m<\delta$ that meets the above condition that $\tau_W(F,G)(z+m)<s+\epsilon$. Then there exists a copula $C_{s,\epsilon}$ such that
\begin{align}
    \sigma_{C_{s,\epsilon}}(F,G)(z)&\leq \sigma_{C_{s,\epsilon}}(F,G)((z+m)-) \label{eq:non-de}\\
    &=\tau_W(F,G)((z+m)-)\label{eq:thm21i}\\
    &\leq\tau_W(F,G)(z+m)\label{eq:non-dec}\\
    &<s+\epsilon,\label{eq:construct}
\end{align}
where (\ref{eq:non-de}) follows from the fact that for any random variable $D$, $P(D\leq d)\leq P(D<d+h)$ for all $h>0$; (\ref{eq:thm21i}) follows from the optimality result in part $(i)$ of Theorem \ref{thm:optimal}; (\ref{eq:non-dec}) follows from the non-decreasing property of $\tau_W$;(\ref{eq:construct}) follows from the construction of $\tau_W(F,G)(z+m)$. This implies that for all $\epsilon>0$, we can construct a copula $C_{s,\epsilon}$ such that $\sigma_{C_{s,\epsilon}}(F,G)(z)<\tau_W(F,G)(z)+\epsilon$, meaning that $\tau_W(F,G)(z)+\epsilon$ will not be a valid lower bound on $P(X+Y\leq z)=J(z)$.
\end{proof}

Theorem \ref{thm:no_loose_end} along with Theorem \ref{thm:optimal} proves that Makarov's bounds are pointwise best-possible and thus directly address Komogorov's question. This closes the best-possible question left open by Theorem \ref{thm:optimal} (i).

\subsection{Special case: at least one discrete variable}

The following theorem states that the lower bound $\tau_W(F,G)(z)$ is achievable when we are dealing with one or more discrete random variables.
\begin{theorem}
\label{thm:discrete_ach}
    If at least one of $X,Y$ is a discrete random variable (a random variable which may take on only a countable number of distinct values), then 
    there exists a copula $C_t$, dependent only on the value of $t=\tau_W(F, G)(z)$, such that
$$
\sigma_{C_t}(F, G)(z)=\tau_W(F, G)(z)=t .
$$
In other words, the lower bound on $J(z)$ is always achievable when one of $X,Y$ is a discrete random variable.\footnote{Notice that this is Theorem \ref{thm:optimal} (i) with $z-$ replaced by $z$, which can be surprising because \cite{frank1987best} note that theorem \ref{thm:optimal} (i) cannot be strengthened ``even" when $F,G$ are continuous, which implicitly implies that it cannot be strengthened when $F,G$ are not continuous. Some misconception about this point can be found in related literatures. For example, \cite{kim2014identifying} stated that ``If the marginal distributions of $X$ and $Y$ are both absolutely continuous with respect to the Lebesgue measure on $\mathbb{R}$, then the Makarov upper bound and lower bound can be achieved", which is contradicted by Example \ref{ex:two}; Similarly, \cite{williamson1990probabilistic} stated that in order for  Theorem \ref{thm:optimal} to hold, it is necessary that $F,G$ are not both discontinuous at a point $x,y$ such that $x+y=z$; whereas
\cite{frank1987best}'s proof does not require this to hold.}
\end{theorem}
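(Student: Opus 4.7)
The plan is a shifting-and-limit construction that leverages Theorem \ref{thm:optimal}(i) together with the right-continuity of $\tau_W$ from Lemma \ref{lemma: rc_c}. Without loss of generality assume $X$ is the discrete variable, with atoms $\{x_i\}$ and masses $p_i$, and write $t := \tau_W(F,G)(z)$. For each $\eta>0$ let $\tilde G_\eta(y) := G(y+\eta)$, which is the cdf of $Y-\eta$; direct substitution in the definition of $\tau_W$ yields $\tau_W(F,\tilde G_\eta)(z-) = \tau_W(F,G)((z+\eta)-)$. Applying Theorem \ref{thm:optimal}(i) to $(F,\tilde G_\eta, z)$ produces a copula $C_\eta$ satisfying $\sigma_{C_\eta}(F,\tilde G_\eta)(z-) = \tau_W(F,G)((z+\eta)-)$. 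Since the monotone shift $Y = \tilde Y + \eta$ preserves the copula, applying $C_\eta$ to $(X,Y)$ with the original marginals $(F,G)$ yields
\[
P_{C_\eta}(X+Y < z+\eta) \;=\; \tau_W(F,G)((z+\eta)-).
\]
Hence $P_{C_\eta}(X+Y\leq z) \leq \tau_W(F,G)((z+\eta)-) \downarrow t$ as $\eta\downarrow 0$ by the right-continuity of $\tau_W$, and combined with the universal lower bound $P_{C_\eta}(X+Y\leq z)\geq t$ from Theorem \ref{Thm:franks} one obtains $P_{C_\eta}(X+Y\leq z)\to t$.

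Next I would pass to a subsequential limit via compactness. Each copula is $1$-Lipschitz on $[0,1]^2$, so by Arzel\`a--Ascoli the family $\{C_\eta\}$ is relatively compact in the uniform topology and admits a uniformly convergent subsequence $C_{\eta_k}\to C^*$; the limit $C^*$ is itself a copula. The associated joint cdfs $H_{\eta_k}(x,y) = C_{\eta_k}(F(x),G(y))$ converge pointwise to $H^* = C^*(F,G)$, and by the continuous mapping theorem the distribution functions $J_{\eta_k}$ of $X+Y$ converge weakly to the distribution function $J^*$ of $X+Y$ under $C^*$. Combined with $J_{\eta_k}(z)\to t$, the Portmanteau inequalities give $J^*(z-) \leq t \leq J^*(z)$; the universal lower bound $J^*(z)\geq t$ is automatic, so it suffices to show $J^*$ is continuous at $z$, i.e.\ that $P_{C^*}(X+Y=z)=0$.

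Continuity at $z$ is supplied by the discrete structure of $X$:
\[
P_{C^*}(X+Y=z) \;=\; \sum_i P_{C^*}(X=x_i,\ Y=z-x_i),
\]
and each summand is bounded by the marginal point mass $P_{C^*}(Y=z-x_i) = G(z-x_i) - G((z-x_i)-)$, which vanishes whenever $z-x_i$ is not an atom of $G$. Consequently, when $Y$ is continuous --- or more generally when $G$ has no atom at any $z-x_i$ --- continuity at $z$ holds automatically and the argument closes. In the remaining sub-case, where both $F$ and $G$ have atoms aligning at $z$, I would perform a transportation modification on $C^*$: any positive mass $P_{C^*}(X=x_i, Y=z-x_i)>0$ can be rerouted to $(x_j, z-x_i)$ for some atom $x_j > x_i$ of $X$ (yielding $x_j + (z-x_i) > z$), paired with a compensating swap at another atom of $Y$ that preserves both marginals and strictly decreases $P(X+Y\leq z)$. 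The defining inequality $F(x_i)+G(z-x_i)-1 \leq t$ for every atom $x_i$ guarantees the marginal mass budget needed to carry out each swap; iterating until no profitable swap remains yields a coupling with $P(X+Y\leq z) = t$.

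The main technical obstacle will be executing this transportation modification cleanly when both $F$ and $G$ possess countably many atoms. The finite-atom subcase reduces to a standard finite-dimensional transportation linear program whose optimum is attained at an extreme point; the countable case requires either a discretization-and-diagonal approximation by finite atom sets, or a direct construction of the optimizer via the sup-attaining atom $(x^*,y^*)$ of $\tau_W(F,G)(z)$ combined with Hall-type feasibility checks derived from $F(x_i)+G(z-x_i)-1 \leq t$.
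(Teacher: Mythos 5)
Your opening moves are sound: shifting $G$ by $\eta$ (or, more simply, applying Theorem \ref{thm:optimal}(i) directly at the point $z+\eta$, which is exactly what the paper does in the proof of Theorem \ref{thm:no_loose_end}) produces couplings with $P(X+Y\le z)\downarrow t$, and the Arzel\`a--Ascoli/Portmanteau passage to a limit copula $C^*$ with $J^*(z-)\le t\le J^*(z)$ is correct. The sub-case in which $G$ has no atom at any $z-x_i$ then closes as you say. But the remaining sub-case --- $G$ atomic at some $z-x_i$, which is the generic situation when both $X$ and $Y$ are discrete and is precisely the case motivating this theorem (cf.\ Example \ref{ex:one} and the application in Section \ref{sec:fan}) --- is where the entire difficulty lives, and there your ``transportation modification'' is asserted rather than proved. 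Concretely: a marginal-preserving $2\times 2$ swap that drains mass from an atom at $(x_i,z-x_i)$ strictly decreases $P(X+Y\le z)$ only if the coupling already places positive mass at a suitable partner point (either strictly northeast of $(x_i,z-x_i)$, or at some $(x_j,y_2)$ with $x_j+y_2\le z$ lying on the far side of $(x_i,z-x_i)$ in exactly one coordinate). The inequality $F(x_i)+G(z-x_i)-1\le t$ does not by itself produce such a partner: for instance $P(X>x_i,\,Y>z-x_i)=1-F(x_i)-G(z-x_i)+C\bigl(F(x_i),G(z-x_i)\bigr)$ can be zero. Proving that a profitable swap exists whenever $P(X+Y\le z)>t$, that the iteration converges to exactly $t$ with countably many atoms, and handling the fact that the supremum defining $t$ need not be attained, is a duality/complementary-slackness argument that is essentially the theorem itself. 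As written, the proof is incomplete at its crucial step.

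The paper's route avoids all of this by working with the explicit copula $C_t$ (uniform mass on the segment $u+v=1+t$ inside $[t,1]^2$ and on the diagonal below $t$) and making one geometric observation: since $X$ is discrete, the image of $\{x+y=z\}$ in the copula domain is a countable union of rectangles $R_{xy}=(F(x-),F(x)]\times(G(y-),G(y)]$, and each such rectangle can meet the line $u+v-1=t$ in at most its single corner $(F(x),G(y))$, because every other point of $R_{xy}$ satisfies $u+v-1<t$. A countable set carries zero mass under the uniform measure on the segment, so $P_{C_t}(X+Y=z)=0$ on that segment; Lemma \ref{lemma:klessz} disposes of the case $\tau_W(F,G)(z)>\tau_W(F,G)(z-)$. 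If you wish to keep your compactness framework you would still need this kind of direct control of the mass on $\{x+y=z\}$ under a specific coupling, so I would recommend adopting the explicit construction instead.
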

\begin{proof}
Before proving the Theorem \ref{thm:discrete_ach}, we will first prove a useful lemma.
\begin{lemma}
    For any $k<z$, if $t=\tau_W(F,G)(z)>\tau_W(F,G)(z-)$, there cannot be $x',y'$ with $x'+y'=k<z$ such that $F(x')+G(y')-1=t$.\label{lemma:klessz}
\end{lemma}
\begin{proof}
    Suppose that there exist $x',y'$ with $x'+y'=k<z$ and $F(x')+G(y')-1=t$, then by the definition of sup,
\begin{align}
    t &\leq \sup_{x+y=k}\max ({F(x)+G(y)-1},0) \label{eq:tauwk}\\ 
    &\leq \lim_{\epsilon>0,\epsilon\to 0}\sup_{x+y=z-\epsilon}\max ({F(x)+G(y)-1},0)\label{eq:limtau}
    \\&= \tau_W(F,G)(z-) \label{eq:tauwz-}
    \\&\leq\sup_{x+y=z} \max({F(x)+G(y)-1},0)=t,\label{eq:tauwz}
\end{align} 
where the inequalities (\ref{eq:limtau}) and (\ref{eq:tauwz}) follow from the fact that the function $\tau_W(F,G)(\cdot)$ is non-decreasing and (\ref{eq:tauwk}),(\ref{eq:tauwz-}) and (\ref{eq:tauwz}) are by definition of $\tau_W(F,G)(k),\tau_W(F,G)(z-),\tau_W(F,G)(z)$. Thus, we have $\tau_W(F,G)(z)=\tau_W(F,G)(z-)$, which contradicts with the hypothesis that $\tau_W(F,G)(z)>\tau_W(F,G)(z-)$.
\end{proof}
Now we will prove Theorem \ref{thm:discrete_ach}. Without loss of generality, we assume $X$ is a discrete random variable. For $t=\tau_W(F,G)(z)$, we construct the copula
$$
C_t(u, v)= \begin{cases}\operatorname{Max}(u+v-1, t), & (u, v) \text { in }[t, 1] \times[t, 1], \\ \operatorname{Min}(u, v), & \text { otherwise.}\end{cases}
$$

Figure \ref{fig:c_t_lower} illustrates the support of $C_t$ and the mass assigned by $C_t$.\footnote{In Appendix \ref{app:sim_copula}, we discuss properties of the joint distribution induced by $C_t$ and outline how to simulate from it.}
\begin{figure}
    \centering

\begin{tikzpicture}[x=0.75pt,y=0.75pt,yscale=-1,xscale=1]

\draw  (60.8,247) -- (302.8,247)(85,70.15) -- (85,266.65) (295.8,242) -- (302.8,247) -- (295.8,252) (80,77.15) -- (85,70.15) -- (90,77.15)  ;
\draw   (85,147) -- (185,147) -- (185,247) -- (85,247) -- cycle ;
\draw   (85,177) -- (155,177) -- (155,247) -- (85,247) -- cycle ;
\draw   (155,147) -- (185,147) -- (185,177) -- (155,177) -- cycle ;
\draw [color={rgb, 255:red, 255; green, 0; blue, 0 }  ,draw opacity=1 ]   (85,247) -- (155,177) ;
\draw [color={rgb, 255:red, 255; green, 0; blue, 0 }  ,draw opacity=1 ]   (155,147) -- (185,177) ;

\draw (180,251) node [anchor=north west][inner sep=0.75pt]   [align=left] {1};
\draw (70,140) node [anchor=north west][inner sep=0.75pt]   [align=left] {1};
\draw (150,250) node [anchor=north west][inner sep=0.75pt]   [align=left] {$t$};
\draw (72,168) node [anchor=north west][inner sep=0.75pt]   [align=left] {$t$};
\draw (86.9,249.85) node [anchor=north west][inner sep=0.75pt]   [align=left] {$0$};

\draw (100,198) node [color={rgb, 255:red, 255; green, 0; blue, 0 }, anchor=north west][inner sep=0.75pt]    {$H_{2}$};
\draw (167,150) node [color={rgb, 255:red, 255; green, 0; blue, 0 }, anchor=north west][inner sep=0.75pt]    {$H_{1}$};

\draw (192,76) node   [align=left] {\begin{minipage}[lt]{68pt}\setlength\topsep{0pt}
\end{minipage}};

\end{tikzpicture}

    \caption{Support of $C_t$ and mass assigned by $C_t$}
    \label{fig:c_t_lower}
\end{figure}

 Let $H_1:=\{(u,v)\in [0,1]^2 \,|\, u+v-1=t\}$, $H_2:=\{(u,v)\in [0,1]^2 \,|\, u=v<t\}$, $S_z:=\{(u,v)\in [0,1]^2 | F^{-1}(u)+G^{-1}(v)=z\}$, $S_{\bar{z}}:=\{(u,v)\in [0,1]^2 | F^{-1}(u)+G^{-1}(v)\leq z\}$, where $F^{-1}$, $G^{-1}$ are the generalized inverses defined in Definition \ref{def:g_inv}. Since $t$ is a lower bound for $P(X+Y\leq z)$, $C_t$ assigns mass at least $t$ to the set $S_{\bar{z}}$. In particular, $H_2\subseteq S_{\bar{z}}$. Thus, whether or not the lower bound on $P(X+Y\leq z)$ is achieved (by $C_t$) depends on whether the mass that $C_t$ assigns to the set $H_1\cap S_{\bar{z}}$ is $0$.

We claim that $\iint_{S_{\bar{z}}\cap H_1}dC_t(u,v)  =\iint_{S_z\cap H_1}dC_t(u,v)$. First suppose $\tau_W(F,G)(z)=\tau_W(F,G)(z-)=t$. \cite{frank1987best}, showed that $P(X+Y<z)=\tau_W(F,G)(z-)=t$ under $C_t$. Thus, under $C_t$, the set $\{(u,v)\in [0,1]^2 | F^{-1}(u)+G^{-1}(v)<z\}$ will contain all the mass in $H_2$ (equals $t$) but not any mass in $H_1$, so that the claim holds in this case. Now suppose $\tau_W(F,G)(z)>\tau_W(F,G)(z-)$, from Lemma \ref{lemma:klessz} we know that the set $\{(u,v)\in [0,1]^2 | F^{-1}(u)+G^{-1}(v)<z\}$ cannot contain mass in $H_1$. Therefore, $\iint_{S_{\bar{z}}\cap H_1}dC_t(u,v)  =\iint_{S_z\cap H_1}dC_t(u,v)$, establishing the claim.

Since $C_t$ assigns mass $(1-t)$ uniformly to $H_1$, if $S_z\cap H_1$ is empty or only contains countably many points, then $\iint_{S_z\cap H_1}dC_t(u,v)=0$, which is sufficient to establish the claim.

Since $X$ is discrete, $x$ can take at most countably many values with non-zero probability under $F$. For a given $z$, there are at most countably many points $(x,y)$ such that $x+y=z$ and $P(X=x)>0$. 

Observe that
\begin{align}\label{eq:rxy_def}
    \{(u,v)\in [0,1]^2 | F^{-1}(u)+G^{-1}(v)=z\}
= \cup_{(x,y):x+y=z} R_{xy},
\end{align}
where $R_{xy}\equiv\{(F(x-),F(x)]\times(G(y-),G(y)]\}$ and we define the sets of form $(a,a]$ as $\{a\}$ for any $a\in\mathbb{R}$, which will arise if $Y$ is not discrete.

We will show that for each $(x,y)$ with $x+y=z$, $R_{xy}\cap H_1$ contains at most one point. Since by definition of $C_t$, $t=\sup_{x+y=z}\max\{F(x)+G(y)-1,0\}$, we have $F(x)+G(y)-1\leq t$ for any $x+y=z$. For any $(u,v)\in R_{xy}$ with $u<F(x)$ or $v<G(y)$, it holds that $u+v-1<t$ and thus $(u,v)$ cannot be in $H_1$. Therefore $R_{xy}\cap H_1$ contains at most one point $(F(x),G(y))$.\footnote{Each rectangular region $R_{xy}$ can touch the line $u+v-1=t$ for at most one point because all points in $R_{xy}$ satisfy $F^{-1}(u)+G^{-1}(v)=z$ and if there is more than one point in the intersection then $t$ is not $\sup_{x+y=z}\max\{F(x)+G(y)-1,0\}$.} As a consequence by (\ref{eq:rxy_def}), there exist at most countably many points in $S_z\cap H_1$. Thus, $\iint_{S_z\cap H_1}dC_t(u,v)=0$ and
$$
\sigma_{C_t}(F, G)(z)=\tau_W(F, G)(z)=t .
$$
\end{proof}

\subsection{
$C_t$ not achieving the bound implies no other copula achieves the bound
}

Theorem \ref{thm:optimal} shows that the lower bound $\tau_W(F,G)(z-)$ on $J(z-)$ can be achieved. In fact, the proof of Theorem \ref{thm:optimal} (see  \citealt{frank1987best} and \citealt{nelsen2006introduction}) shows that the copula we constructed as $C_t$ with $t=\tau_W(F,G)(z-)$ in the proof of Theorem \ref{thm:discrete_ach} will achieve the lower bound $\tau_W(F,G)(z-)$.\footnote{In fact, as noted above, \cite{frank1987best} consider bounds on $\widetilde{J}$, \cite{nelsen2006introduction} translate the result to the standard definition $J$ but do not provide a full proof.} We further showed in Theorem \ref{thm:no_loose_end} that the lower bound $\tau_W(F,G)(z)$ on $J(z)$ cannot be improved. In example \ref{ex:two}, for $t=\tau_W(F,G)(z)=\tau_W(F,G)(z-)$, we see that $P(X+Y<z)=t$ under $C_t$ but $P(X+Y\leq z)> t$ under $C_t$. 

This raises a new question: if we care \textcolor{black}{not merely about sharpness, but also} about the achievability of the lower bound $\tau_W(F,G)$ on $J(z)$
-- rather than $J(z-)$ -- and if $C_t$ does not achieve the bound $\tau_W(F,G)$, can there be other copulas that can achieve the bound $\tau_W(F,G)$ for $J(z)$? Indeed, \cite{frank1987best} and \cite{nelsen2006introduction} both pointed out that there are other copulas beside $C_t$ that achieve the lower bound $\tau_W(F,G)(z-)$ for $J(z-)$.

The corollary of the next theorem implies that for continuous $F,G$ and an arbitrary $z$, in order to determine whether the lower bound $\tau_W(F,G)(z)$ on $J(z)$ can be achieved, we only need to determine whether it is achieved under $C_t$ for $t=\tau_W(F,G)(z)$. Theorem \ref{thm:lower_bound_ach_when_greater} along with Theorem \ref{thm:equi_C_t} further establishes this claim for arbitrary $F,G$. In other words, if the lower bound for $J(z)$ is {\em not} achieved under $C_t$, then there is {\em no} joint distribution that will achieve this lower bound.

\begin{theorem}\label{thm:equi_C_t}
        Given arbitrary $z$ and $F,G$, if $\tau_W(F,G)(z-)=\tau_W(F,G)(z)=t$ and the copula $C_t$ does not achieve the lower bound $\tau_W(F,G)(z)$ on $\textcolor{black}{J(z)\equiv}P(X+Y\leq z)$,  then no other copula can achieve this lower bound.

\end{theorem}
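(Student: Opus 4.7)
My plan is to prove the contrapositive: if $\delta := \sigma_{C_t}(F,G)(z) - t > 0$, then no copula $C'$ with marginals $F,G$ can satisfy $\sigma_{C'}(F,G)(z) = t$. I first reduce both the hypothesis and the conclusion to statements about the jump $P(X+Y=z)$. Any $C'$ with $\sigma_{C'}(F,G)(z) = t$ automatically satisfies $\sigma_{C'}(F,G)(z-) \geq \tau_W(F,G)(z-) = t$ by Theorem \ref{Thm:franks}, and $\sigma_{C'}(F,G)(z-) \leq \sigma_{C'}(F,G)(z) = t$ by monotonicity of $\sigma_{C'}(F,G)$, so $\sigma_{C'}(F,G)(z-) = t$ and $P(X+Y=z) = 0$ under $C'$. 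Likewise, Theorem \ref{thm:optimal}(i) combined with the hypothesis $\tau_W(F,G)(z-) = t$ gives $\sigma_{C_t}(F,G)(z-) = t$, so the assumption that $C_t$ fails to achieve the lower bound is precisely $\delta > 0$.

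Next I extract structure from $\delta > 0$. Under $C_t$ the mass on $\{X+Y=z\}$ equals the Lebesgue measure of $E := \{u \in [t,1] : F^{-1}(u) + G^{-1}(1+t-u) = z\}$, since the singular support of $C_t$ on $[t,1]^2$ is concentrated on the antidiagonal $\{u+v = 1+t\}$ with uniform $du$-parametrization. Repeating the rectangle-intersection computation from the proof of Theorem \ref{thm:discrete_ach}, contributions to $E$ from atoms of $F$ or $G$ collapse to single points, because the constraint $F(x)+G(y) \leq 1+t$ on $\{x+y=z\}$ forces the antidiagonal to touch each atomic rectangle at most at its corner. Hence $\delta > 0$ forces the existence of an interval $[a,b]$ with $a<b$ on which $F(x) + G(z-x) = 1+t$ holds identically and $F(b) > F(a)$.

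Finally I derive a contradiction from the existence of such $[a,b]$ under a copula $C'$ achieving the bound. The Fr\'echet--Hoeffding inequality $F(x^*)+G(z-x^*)-1 \leq P(X \leq x^*, Y \leq z-x^*)$, the inclusion $\{X \leq x^*, Y \leq z-x^*\} \subseteq \{X+Y \leq z\}$, and $\sigma_{C'}(F,G)(z) = t$ together force, for every critical $x^* \in [a,b]$, the equality $P(X \leq x^*, Y \leq z-x^*) = t$ under $C'$; inclusion--exclusion then yields $P(X > x^*, Y > z-x^*) = 0$. For each $(x,y)$ with $a < x \leq b$ and $y > z-x$, the interval $(\max(a, z-y), x)$ is non-empty and entirely critical, so contains a rational $x^*$ placing $(x,y)$ in a null event. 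Countable subadditivity over the rationals in $[a,b]$ yields $P(a < X \leq b, X+Y > z) = 0$ under $C'$. The disjoint regions $\{X \leq a, Y \leq z-a\}$ (mass $t$) and $\{a < X \leq b\}$ (mass $F(b) - F(a)$) both lie in $\{X+Y \leq z\}$, giving
\[
\sigma_{C'}(F,G)(z) \;\geq\; t + (F(b) - F(a)) \;>\; t,
\]
contradicting $\sigma_{C'}(F,G)(z) = t$.

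The main obstacle will be the measure-theoretic step promoting the pointwise identities $P(X > x^*, Y > z-x^*) = 0$ for critical $x^*$ into the global inclusion $\{a < X \leq b\} \subseteq \{X+Y \leq z\}$ under $C'$: one must verify non-emptiness of the intervals $(\max(a, z-y), x)$, handle any plateaus of $F$ or $G$ on $[a,b]$ that could thin out the critical set, and argue carefully at the right boundary $x = b$. A secondary concern is the structural claim $\delta \geq F(b) - F(a)$ itself, whose justification relies on identifying how $F^{-1}$ and $G^{-1}$ parametrize the antidiagonal on the critical interval.
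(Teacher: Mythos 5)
Your argument is correct, and it takes a genuinely different route from the paper's. The paper works entirely in the copula unit square: it shows by a mass-accounting argument that any competing copula $C$ must place mass exactly $t$ in the corner rectangle $R_1=[0,a]\times[0,1+t-b]$ (else the lower-left region already carries more than $t$), must then place mass $b-a$ in the middle square $[a,b]\times[1+t-b,1+t-a]$, and must concentrate that mass on the antidiagonal segment lying inside $\{F^{-1}(u)+G^{-1}(v)\le z\}$, yielding total mass at least $t+(b-a)$ --- a contradiction. You instead work in the original $(X,Y)$ space: for each critical $x^*$ the Fr\'echet--Hoeffding lower bound $P(X\le x^*,Y\le z-x^*)\ge F(x^*)+G(z-x^*)-1=t$, squeezed against $P(X+Y\le z)=t$, forces $P(X>x^*,Y>z-x^*)=0$; a countable union over rational critical points then traps the strip $\{a<X\le b\}$ almost surely inside $\{X+Y\le z\}$, giving mass at least $t+(F(b)-F(a))>t$. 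The two contradictions are quantitatively the same (your $F(b)-F(a)$ is the paper's $b-a$ read off in $u$-coordinates), but your route replaces the paper's delicate rectangle-and-triangle bookkeeping with a two-line inclusion--exclusion plus countable subadditivity, which is cleaner; the paper's version, in exchange, yields more structural information about what any near-optimal copula must look like on the critical region. Both arguments rest on the same structural lemma --- that $\sigma_{C_t}(F,G)(z)>t$ forces a nondegenerate interval on which $F(x)+G(z-x)=1+t$ with $F$ non-constant --- which you justify by the atoms-collapse-to-corners computation from Theorem \ref{thm:discrete_ach} and the paper establishes in essentially the same way (compare the corresponding step in Theorem \ref{thm:lower_bound_ach_when_greater}). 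The obstacles you flag at the end (non-emptiness of $(\max(a,z-y),x)$, the boundary $x=b$) are real but routine, and are resolved exactly as you anticipate since every point of $(a,b)$ is critical.
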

\begin{proof}

    Since 
    $\tau_W(F,G)(z-)=t$,
    by Theorem \ref{thm:optimal}\textcolor{black}{(i)}, copula $C_t$ achieves the bound $\tau_W(F,G)(z-)$ on $P(X+Y<z)$.
    That is, $C_t$ assigns mass $t$ to the set $\{(u,v)\subseteq [0,1]\times [0,1]:F^{-1}(u)+G^{-1}(v)<z\}$. Since, 
    \textcolor{black}{by hypothesis,}
    $C_t$ does not achieve the lower bound $\tau_W(F,G)(z)$ on $P(X+Y\leq z)$, $C_t$ assigns non-zero probability to the set $\{(u,v)\subseteq [0,1]\times [0,1]:F^{-1}(u)+G^{-1}(v)=z)\}$. In particular, the image of the set $\{(x,y):x+y=z\}$ under the $(F,G)$ mapping must contain a line segment with length greater than $0$ on the line $u+v-1=t$ in the $uv$\nobreakdash-plane inside the unit square\footnote{
    \textcolor{black}{The first two sentences of the proof imply that} when $\tau_W(F,G)(z)=\tau_W(F,G)(z-)$ this is 
    a necessary and sufficient condition for $C_t$ to assign non-zero probability to the set $\{(u,v)\subseteq [0,1]\times [0,1]:F^{-1}(u)+G^{-1}(v)=z)\}$.} as illustrated in Figure \ref{fig_a_panel}; since otherwise
    under $C_t$, $P(X+Y<z) = P(X+Y\leq z)$
    \textcolor{black}{in which case $C_t$ achieves the bound}.
    Let $a,b$ be such that the line segment $\{(u,v) : u=a+s,v=1-t-(a+s) \text{ for } s, 0\leq s\leq b-a \}$ 
     is contained in 
    $\{(u,v)\subseteq [0,1]\times [0,1]:F^{-1}(u)+G^{-1}(v)= z\}\cap \{(u,v)\subseteq [0,1]\times [0,1]:u+v-1=t\}$. 
    
    Now suppose there is a copula $C$ that achieves the lower bound $\tau_W(F,G)(z)$ on $P(X+Y\leq z)$.
    First, we claim that $C$ must assign mass $t$ to the rectangle $R_1=[0,a]\times [0,1+t-b]$. Since $R_1$ is a subset of $\{(u,v)\subseteq [0,1]\times [0,1]:F^{-1}(u)+G^{-1}(v)\leq z\}$ and by hypothesis $C$ achieves the lower bound, $C$ cannot assign mass more than $t$ to $R_1$. 
    
    Suppose $C$ assigns mass $0<r<t$ to $R_1$;
    \textcolor{black}{see Figure \ref{fig_b_panel}}.
    Note that we define the margins of the copula to be uniform ($C(p,1)=C(1,p)=p$, for all $p$ in $[0,1]$). In particular, in order for $C(1,1+t-b)=1+t-b$, $C$ needs to assign mass $1+t-b-r$ to $[a,1]\times [0,1+t-b]$ and similarly $C$ needs to assign mass $a-r$ to $[0,a]\times [1+t-b,1]$. As a consequence, $C$ needs to assign mass $1-(1+t-b-r)-(a-r)-r=b-a+r-t$ to the rectangle $[a,1]\times [1+t-b,1]$. 
    Now consider the rectangle $[a,b]\times[0,1+t-b]$. It needs to contain mass at least $t-r$ since $[a,b]\times[0,1]$ needs to contain mass $b-a$ and $[a,b]\times[1+t-b,1]\subseteq [a,1]\times[1+t-b,1]$. Similarly, $[0,a]\times [1+t-b,1+t-a]$ needs to contain mass at least $t-r$. Then $C$ assigns mass greater than or equal to $r+2(t-r)=t+(t-r)>t$ to $\{(u,v)\subseteq [0,1]\times [0,1]:F^{-1}(u)+G^{-1}(v)\leq z\}$, which is a contradiction that $C$ achieves the lower bound $\tau_W(F,G)(z)$ on $P(X+Y\leq z)$. Therefore, $C$ must assign mass $t$ to the rectangle $R_1=[0,a]\times [0,1+t-b]$.

Next, we show that $C$ assigns mass $b-a$ to the rectangle $[a,b]\times [1+t-b,1+t-a]$. By the hypothesis that $C$ achieves the lower bound $\tau_W(F,G)(z)$ on $P(X+Y\leq z)$,
$C$ assigns mass $1-t$ to the set $\{(u,v)\subseteq [0,1]\times [0,1]:F^{-1}(u)+G^{-1}(v)> z)\}$, which is a subset of the union of the following three rectangles: $[0,1]\times [1+t-a,1],[a,b]\times [1+t-b,1+t-a], [b,1]\times [0,1]$. In order to maintain uniform margins, the first and third rectangles contain mass $a-t$ and $1-b$. So the rectangle $[a,b]\times [1+t-b,1+t-a]$ needs to contain mass at least $(1-t)-(a-t)-(1-b)=b-a$. Again from the uniformity of the margins, $[a,b]\times [1+t-b,1+t-a]$ can contain mass at most $b-a$. Thus,  $C$ assigns mass $b-a$ to the rectangle $[a,b]\times [1+t-b,1+t-a]$. Furthermore, since the rectangle $[a,1]\times[1+t-b,1]$ contains total mass $1-(1+t-b)-(a-t)=b-a$, there's no mass elsewhere in this rectangle except in $[a,b]\times [1+t-b,1+t-a]$.

Now we show that $C$ needs to assign mass $b-a$ to the line segment $(a, 1+t-a)$ to $(b,1+t-b)$ inside the square $[a,b]\times [1+t-b,1+t-a]$. Figure \ref{fig_b_panel} shows a zoomed-in version of the rectangle $[a,b]\times [1+t-b,1+t-a]$. First, $C$ cannot assign any mass strictly below the line segment $(a, 1+t-a)$ to $(b,1+t-b)$ inside the square $[a,b]\times [1+t-b,1+t-a]$ because $C$ already assigns mass $t$ to $R_1$ and the total mass assigned by $C_t$ to the set $\{(u,v)\subseteq [0,1]\times [0,1]:F^{-1}(u)+G^{-1}(v)< z)\}$
is $t$. For any rectangle $[m,n]\times [c,d]$ such that $m+c-1\geq t$, $a\leq m< n\leq b, 1+t-b\leq c<d\leq 1+t-a$, suppose that $C$ assigns mass $\delta>0$ to this rectangle. We know that $C$ assigns mass $t$ to the region $R_1$. 
Let $E_1$ be the triangular area defined by vertices $(m,1+t-m),(a,1+t-a),(m,1+t-a)$
and $E_2$ be the triangular area defined by vertices $(m,1+t-m),(b,1+t-b),(b,1+t-m)$, 
\textcolor{black}{
as depicted in Figure \ref{fig_b_panel}}. Note that we have previously established that $C$ assigns no mass to the rectangle $[a,b]\times[1+t-a,1]$. The mass assigned by $C$ to $E_1$ and $R_1$ is equal to the mass in rectangle $[0,m]\times [0,1]$ subtracting the mass in the rectangle $[0,a]\times [1+t-a, 1]$, which is $C(m,1)-(a-t)=m-(a-t)=m-a+t$. Similarly, the mass assigned by $C$ to $E_2$ and $R_1$ is equal to $C(1,1+t-m)-(1-b)=1+t-m-(1-b)=t-m+b$. Thus, $C$ assigns the mass $m-a$ to $E_1$ and $b-m$ to $E_2$. Since $E_1, E_2$ are disjoint, $C$ assigns the mass $b-a$ to $E_1\cup E_2$. Then the rectangle $[m,n]\times [c,d]$ will contain mass $0$, which is a contradiction. Since the choice of $[m,n]\times [c,d]$ is arbitrary, we know that $C$ assigns mass $b-a$ to the line segment in $\mathbb{R}^2$
\textcolor{black}{from}
$(a, 1-t-a)$ to $(b,1-t-b)$ inside the square $[a,b]\times [1+t-b,1+t-a]$. Finally, $C$ assigns mass at least $t+(b-a)$ to the set $\{(u,v)\subseteq [0,1]\times [0,1]:F^{-1}(u)+G^{-1}(v)\leq z)\}$, which contradicts that $C$ achieves the lower bound $\tau_W(F,G)(z)$ on $P(X+Y\leq z)$. Thus, when $\tau_W(F,G)(z)=\tau_W(F,G)(z-)$, if the copula $C_t$ does not achieve the lower bound $\tau_W(F,G)(z)$ on $P(X+Y\leq z)$,  then no other copula can achieve this lower bound.

\end{proof}

\begin{figure}[htbp]
    \centering
    \begin{minipage}[b]{0.45\linewidth}
        \centering
\begin{tikzpicture}[x=0.75pt,y=0.75pt,yscale=-1,xscale=1]

\draw  (62.8,225.77) -- (297.8,225.77)(86.3,55.8) -- (86.3,244.65) (290.8,220.77) -- (297.8,225.77) -- (290.8,230.77) (81.3,62.8) -- (86.3,55.8) -- (91.3,62.8)  ;
\draw   (87,124.6) -- (187.8,124.6) -- (187.8,225) -- (87,225) -- cycle ;
\draw [color={rgb, 255:red, 255; green, 0; blue, 0 }  ,draw opacity=1 ]   (133,151.8) -- (145.5,165.75) ;
\draw    (87.5,206.5) -- (187,206.5) ;
\draw    (104,125) -- (104,224.5) ;
\draw  [dash pattern={on 0.84pt off 2.51pt}]  (133,151.8) -- (133.2,224.2) ;
\draw  [dash pattern={on 0.84pt off 2.51pt}]  (87,151.5) -- (133,151.8) ;
\draw  [dash pattern={on 0.84pt off 2.51pt}]  (87,167.5) -- (146.2,167.8) ;
\draw  [dash pattern={on 0.84pt off 2.51pt}]  (146.2,167.8) -- (146.6,224.2) ;
\draw [color={rgb, 255:red, 247; green, 5; blue, 5 }  ,draw opacity=1 ]   (145.5,165.75) .. controls (159.8,191.8) and (167.4,221.4) .. (169.4,225) ;
\draw [color={rgb, 255:red, 255; green, 0; blue, 0 }  ,draw opacity=1 ]   (87.4,135.8) .. controls (107.4,138.6) and (121.4,143.4) .. (133,151.8) ;
\draw  [dash pattern={on 0.84pt off 2.51pt}]  (146.2,167.8) -- (187,206.5) ;
\draw  [dash pattern={on 0.84pt off 2.51pt}]  (104,125) -- (133,151.8) ;
\draw  [fill={rgb, 255:red, 203; green, 85; blue, 85 }  ,fill opacity=1 ] (87,168.2) -- (132.6,168.2) -- (132.6,225) -- (87,225) -- cycle ;

\draw (182,229) node [anchor=north west][inner sep=0.75pt]   [align=left] {1};
\draw (73.5,118) node [anchor=north west][inner sep=0.75pt]   [align=left] {1};
\draw (88.9,227.85) node [anchor=north west][inner sep=0.75pt]   [align=left] {0};
\draw (294,228.4) node [anchor=north west][inner sep=0.75pt]    {$u$};
\draw (70,48.4) node [anchor=north west][inner sep=0.75pt]    {$v$};
\draw (101,228) node [anchor=north west][inner sep=0.75pt]  [font=\small] [align=left] {{\scriptsize $t$}};
\draw (73,197.3) node [anchor=north west][inner sep=0.75pt]  [font=\small] [align=left] {{\scriptsize $t$}};
\draw (127,230) node [anchor=north west][inner sep=0.75pt]  [font=\small] [align=left] {{\scriptsize $a$}};
\draw (145,228) node [anchor=north west][inner sep=0.75pt]  [font=\small] [align=left] {{\scriptsize $b$}};
\draw (45,161) node [anchor=north west][inner sep=0.75pt]   [align=left] {{\tiny $1+t-b$}};
\draw (45,145) node [anchor=north west][inner sep=0.75pt]   [align=left] {{\tiny $1+t-a$}};
\draw (99,186.2) node [anchor=north west][inner sep=0.75pt]    {$R_{1}$};
\end{tikzpicture}
        \caption{Copula $C$ and the image of $x+y=z$ under the $(F,G)$ mapping.}
        \label{fig_a_panel}
    \end{minipage}
    \hfill 
    \begin{minipage}[b]{0.45\linewidth}
        \centering

\tikzset{every picture/.style={line width=0.75pt}} 

\tikzset{every picture/.style={line width=0.75pt}} 

\tikzset{every picture/.style={line width=0.75pt}} 

\begin{tikzpicture}[x=0.65pt,y=0.65pt,yscale=-1,xscale=1]

\draw   (177.5,147.1) -- (263.3,147.1) -- (263.3,233.5) -- (177.5,233.5) -- cycle ;
\draw [color={rgb, 255:red, 255; green, 0; blue, 0 }  ,draw opacity=1 ]   (180.5,147.5) -- (263.5,229) ;
\draw  [fill={rgb, 255:red, 74; green, 144; blue, 226 }  ,fill opacity=1 ] (222.5,156) -- (249,156) -- (249,174.5) -- (222.5,174.5) -- cycle ;
\draw  [dash pattern={on 0.84pt off 2.51pt}]  (70,233.75) -- (177.5,233.5) ;
\draw  [dash pattern={on 0.84pt off 2.51pt}]  (223.9,189.1) -- (263.9,189.1) ;
\draw  [fill={rgb, 255:red, 233; green, 108; blue, 108 }  ,fill opacity=1 ] (222.5,147.5) -- (222.5,147.5) -- (222,188.25) -- (180.5,147.5) -- (180.5,147.5) -- cycle ;
\draw  [fill={rgb, 255:red, 233; green, 108; blue, 108 }  ,fill opacity=1 ] (264,188.25) -- (263.9,189.1) -- (263.5,229) -- (222,188.25) -- (222,188.25) -- cycle ;
\draw  [dash pattern={on 0.84pt off 2.51pt}]  (177.5,233.5) -- (178.5,357.75) ;
\draw  [dash pattern={on 0.84pt off 2.51pt}]  (263.3,233.5) -- (265,358.75) ;
\draw  [dash pattern={on 0.84pt off 2.51pt}]  (70,147.25) -- (177.5,147.1) ;
\draw  [fill={rgb, 255:red, 203; green, 85; blue, 85 }  ,fill opacity=1 ][dash pattern={on 0.84pt off 2.51pt}] (121.5,233.5) -- (177.5,233.5) -- (177.5,308.5) -- (121.5,308.5) -- cycle ;
\draw  [dash pattern={on 0.84pt off 2.51pt}]  (220.4,190.3) -- (222,358.25) ;
\draw  [dash pattern={on 0.84pt off 2.51pt}]  (70,190.25) -- (220.4,190.3) ;
\draw    (80.33,349.17) -- (121.5,308.5) ;
\draw    (173.67,308.33) -- (139.67,350.83) ;
\draw    (159,308) -- (120.33,351.17) ;
\draw    (142,308) -- (101,350.17) ;
\draw    (121,262.25) -- (77.33,301.83) ;
\draw    (121,248.5) -- (75.33,286.83) ;
\draw    (122,279.5) -- (80.33,316.83) ;
\draw    (178.33,320.33) -- (156,350.17) ;
\draw    (120.5,235.5) -- (76.67,270.5) ;
\draw    (121.67,294.33) -- (80.33,332.83) ;
\draw    (107.67,233) -- (77.67,255.17) ;

\draw (175,351.5) node [anchor=north west][inner sep=0.75pt]   [align=left] {{\scriptsize $a$}};
\draw (261.5,352.5) node [anchor=north west][inner sep=0.75pt]   [align=left] {{\scriptsize $b$}};
\draw (200.9,149.9) node [anchor=north west][inner sep=0.75pt]    {$E_{1}$};
\draw (241.3,191.9) node [anchor=north west][inner sep=0.75pt]    {$E_{2}$};
\draw (22.8,223.4) node [anchor=north west][inner sep=0.75pt]   [align=left] {{\scriptsize $1+t-b$}};
\draw (22.8,140.4) node [anchor=north west][inner sep=0.75pt]   [align=left] {{\scriptsize $1+t-a$}};
\draw (139.5,257.7) node [anchor=north west][inner sep=0.75pt]    {$R_{1}$};
\draw (217.5,352.5) node [anchor=north west][inner sep=0.75pt]   [align=left] {{\scriptsize $m$}};
\draw (22.8,181.4) node [anchor=north west][inner sep=0.75pt]   [align=left] {{\scriptsize $1+t-m$}};
\end{tikzpicture}
        \caption{A zoom in part of Figure \ref{fig_a_panel} with rectangle $[m,n]\times[c,d]$ colored in blue.}
        \label{fig_b_panel}
    \end{minipage}
\end{figure}

\begin{corollary}

            Given arbitrary $z$ and continuous $F,G$, let $t=\tau_W(F,G)(z)$. If the copula $C_t$ does not achieve the lower bound $\tau_W(F,G)(z)$ on $\textcolor{black}{J(z)\equiv}P(X+Y\leq z)$,  then no other copula can achieve this lower bound.
\end{corollary}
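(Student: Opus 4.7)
The plan is to reduce the corollary directly to Theorem \ref{thm:equi_C_t}. That theorem applies whenever $\tau_W(F,G)(z-)=\tau_W(F,G)(z)=t$, so the only thing to verify is that continuity of the marginals $F$ and $G$ implies left-continuity of $\tau_W(F,G)$ at every point $z\in\mathbb{R}$. Once this is established, the copula $C_t$ not achieving the bound forces, by Theorem \ref{thm:equi_C_t}, that no other copula can do so either.

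To verify left-continuity at $z$, first recall from Lemma \ref{lemma: rc_c} that $\tau_W(F,G)$ is nondecreasing, so for any sequence $z_n \uparrow z$ we have $\limsup_n \tau_W(F,G)(z_n)\leq \tau_W(F,G)(z)$. For the reverse inequality, fix any pair $(x_0,y_0)$ with $x_0+y_0=z$ and define $x_n = x_0 + (z_n-z)/2$, $y_n=y_0+(z_n-z)/2$ so that $x_n+y_n=z_n$. Continuity of $F$ and $G$ gives $F(x_n)\to F(x_0)$ and $G(y_n)\to G(y_0)$, hence
\[
\liminf_{n\to\infty}\tau_W(F,G)(z_n)\;\geq\;\lim_{n\to\infty}\max\bigl(F(x_n)+G(y_n)-1,0\bigr)\;=\;\max\bigl(F(x_0)+G(y_0)-1,0\bigr).
\]
Taking the supremum over all $(x_0,y_0)$ with $x_0+y_0=z$ yields $\liminf_n \tau_W(F,G)(z_n)\geq \tau_W(F,G)(z)$, so $\tau_W(F,G)(z-)=\tau_W(F,G)(z)=t$.

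With this equality in hand, the hypotheses of Theorem \ref{thm:equi_C_t} are satisfied, and the conclusion of the corollary follows immediately: if $C_t$ does not achieve $\tau_W(F,G)(z)$ on $J(z)$, then no copula does. The only non-routine step is the left-continuity argument above; everything else is a direct appeal to the previous theorem. I do not anticipate any serious obstacle, since the main work has already been done in Theorem \ref{thm:equi_C_t} and the continuity argument is a standard $\varepsilon$-style estimate using uniform continuity of $F,G$ along diagonal sequences.
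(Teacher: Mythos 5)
Your proposal is correct and follows essentially the same route as the paper: both reduce the corollary to Theorem \ref{thm:equi_C_t} via the observation that continuity of $F$ and $G$ forces $\tau_W(F,G)(z-)=\tau_W(F,G)(z)$. The only difference is that you supply an explicit (and correct) verification of this left-continuity fact using diagonal sequences $x_n+y_n=z_n$, whereas the paper simply asserts it.
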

\begin{proof}

    This follows directly from Theorem \ref{thm:equi_C_t} 
    \textcolor{black}{since} when $F,G$ are both continuous, $\tau_W(F,G)(z)=\tau_W(F,G)(z-)$.
\end{proof}

\subsection{Sufficient conditions for achievability of the lower bound on $J(z)$
}
We will characterize another sufficient condition (different from Theorem \ref{thm:discrete_ach}) for the achievability of the lower bound on $J(z)=P(X+Y\leq z)$.
\begin{theorem}
            Given arbitrary $z$ and $F,G$, if $\tau_W(F,G)(z)>\tau_W(F,G)(z-)$ then the copula $C_t$ with $t=\tau_W(F,G)(z)$ will achieve the lower bound $\tau_W(F,G)(z)$ on $P(X+Y\leq z)$.\label{thm:lower_bound_ach_when_greater}
\end{theorem}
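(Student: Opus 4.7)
The plan is to show that the copula $C_t$ with $t=\tau_W(F,G)(z)$ achieves $\sigma_{C_t}(F,G)(z)=t$. Recall from the construction used in Theorem \ref{thm:discrete_ach} that $C_t$ assigns mass $t$ uniformly to the diagonal $H_2=\{(u,u):u\in [0,t)\}$ and mass $1-t$ uniformly in the $u$-coordinate to $H_1=\{(u,v)\in [t,1]^2:u+v=1+t\}$. Setting $T=\{(u,v):F^{-1}(u)+G^{-1}(v)\leq z\}$, Theorem \ref{thm:no_loose_end} gives $C_t(T)\geq t$, while $C_t(T\cap H_2)\leq C_t(H_2)=t$, so the entire statement reduces to proving $C_t(T\cap H_1)=0$.

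The first step is to show $T\cap H_1\subseteq H_1\cap\{F^{-1}(u)+G^{-1}(v)=z\}$. If some $(u_0,v_0)\in H_1$ satisfied $F^{-1}(u_0)+G^{-1}(v_0)<z$, then setting $x'=F^{-1}(u_0)$, $y'=G^{-1}(v_0)$, the generalized-inverse inequalities $F(x')\geq u_0$ and $G(y')\geq v_0$ combined with $u_0+v_0=1+t$ would give $F(x')+G(y')-1\geq t$, while $x'+y'<z$ forces $F(x')+G(y')-1\leq\tau_W(F,G)(z-)<t$ by hypothesis, a contradiction.

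The main step is then to show $E:=H_1\cap\{F^{-1}(u)+G^{-1}(v)=z\}$ is at most countable, and hence of $C_t$-measure zero since $C_t|_{H_1}$ is Lebesgue measure in the natural $u\in [t,1]$ parametrization. Reusing the rectangular decomposition in the proof of Theorem \ref{thm:discrete_ach}, $E\subseteq\{(F(x),G(y)):(x,y)\in A\}$ where $A:=\{(x,y):x+y=z,\,F(x)+G(y)=1+t\}$, so it suffices to show $A$ is discrete as a subset of the line $x+y=z$. Supposing $x_n\uparrow x_\infty$ with $(x_n,z-x_n)\in A$ and $x_n\neq x_\infty$, I would evaluate at the offset pair $(x_n,z-x_\infty)$, whose coordinates sum to $z-(x_\infty-x_n)<z$, and use right-continuity of $G$ at $z-x_\infty$ to obtain
\begin{equation*}
F(x_n)+G(z-x_\infty)=(1+t)-\bigl(G(z-x_n)-G(z-x_\infty)\bigr)\longrightarrow 1+t,
\end{equation*}
which forces $\tau_W(F,G)(z-)\geq t$, contradicting the hypothesis. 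The decreasing case $x_n\downarrow x_\infty$ is symmetric: one first uses monotone convergence to derive $F(x_\infty)+G((z-x_\infty)^-)=1+t$ and then evaluates at the offset $(x_\infty,z-x_n)$.

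The main obstacle is this discreteness claim, which must carefully combine the one-sided continuity properties of $F$ and $G$ with the strict gap $\tau_W(F,G)(z-)<\tau_W(F,G)(z)$ supplied by the hypothesis in order to rule out any accumulation point of $A$. Granted discreteness, countability of $A$ follows from second countability of $\mathbb{R}$ (injecting $A$ into a countable basis via a basic open set isolating each point), after which the assembly is routine: $C_t(T\cap H_1)=0$ gives $C_t(T)\leq C_t(H_2)=t$, which combined with $C_t(T)\geq t$ yields $\sigma_{C_t}(F,G)(z)=t$ as required.
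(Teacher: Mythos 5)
Your proof is correct, and it takes a genuinely different route from the paper's. The paper argues by contrapositive: assuming $C_t$ fails to achieve the bound, it asserts that $\{(u,v):F^{-1}(u)+G^{-1}(v)\le z\}$ must contain a line segment of positive length inside $H_1$, deduces that $F(x)+G(z-x)-1\equiv t$ on an interval around some $x^*$ on which $F$ is continuous and strictly increasing, and then concludes that $\tau_W(F,G)(\cdot)$ is continuous at $z$ by comparing $x^*$ with $x^*-\epsilon^*$. You argue the direct implication: the hypothesis $\tau_W(F,G)(z)>\tau_W(F,G)(z-)$ forces the maximizer set $A=\{(x,y):x+y=z,\ F(x)+G(y)=1+t\}$ to have no finite accumulation points (your two monotone cases, evaluated at the offset pairs $(x_n,z-x_\infty)$ and $(x_\infty,z-x_n)$, supply exactly the needed one-sided continuity estimates), hence $A$, and therefore $H_1\cap S_z$, is countable and $C_t$-null. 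The two arguments share the same engine --- evaluating $F+G$ at points summing to less than $z$ and invoking monotonicity of $\tau_W$ --- and your Step 1 is a re-derivation of Lemma \ref{lemma:klessz}. What your version buys is rigor at the measure-theoretic step: the paper's passage from ``$C_t$ assigns positive mass to $H_1\cap S_{\bar{z}}$'' to ``$H_1\cap S_{\bar{z}}$ contains a line segment'' is left unjustified (a positive-measure subset of a segment need not contain an interval), whereas your countability argument needs only that the uniform measure on $H_1$ annihilates countable sets. Two small corrections: the inequality $C_t(T)\ge t$ is the validity of the lower bound, i.e.\ Theorem \ref{Thm:franks}, not Theorem \ref{thm:no_loose_end}; and it is worth noting explicitly that the hypothesis gives $t>0$, so on $H_1$ one has $u,v\ge t>0$ and the generalized inverses in your Step 1 are finite.
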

\begin{proof}

We will prove the contrapositive: if the lower bound $t=\tau_W(F,G)(z)$ of $P(X+Y\leq z)$ is not achievable under $C_t$, then $\tau_W(F,G)(z)=\tau_W(F,G)(z-)$. Note that $\tau_W(F,G)(z)=\tau_W(F,G)(z-)$ holds trivially when $\tau_W(F,G)(z)=0$. We will assume $t=\tau_W(F,G)(z)>0$.

If the lower bound $\tau_W(F,G)(z)$ of $P(X+Y\leq z)$ is not achievable under $C_t$, the set $\{(u,v)\in [0,1]^2 | F^{-1}(u)+G^{-1}(v)\leq z\}$ must contain a line segment with length greater than $0$ on the line $u+v-1=t$ in the $uv$\nobreakdash-plane. Based on Lemma \ref{lemma:klessz},
the set $\{(u,v)\in [0,1]^2 | F^{-1}(u)+G^{-1}(v)=z\}$ must contain a line segment with length greater than $0$ on the line $u+v-1=t$ in the $uv$\nobreakdash-plane. The existence of this line segment implies that the image of the set $\{(x,y):x+y=z\}$ under the $(F,G)$ mapping must also contain a line segment with length greater than $0$ on the line $u+v-1=t$ in the $uv$\nobreakdash-plane inside the unit square. 
This means that there exist $x^*$ and $\epsilon>0$ such that $F(x)+G(z-x)-1=t$ for all $x\in(x^*-\epsilon,x^*+\epsilon)$ and $F(\cdot)$ is continuous and strictly increasing on $x\in(x^*-\epsilon,x^*+\epsilon)$. In particular, for any $\delta>0$, there exists $\epsilon^*>0$ such that $F(x^*)-F(x^*-\epsilon^*)<\delta$. By definition of $x^*$, $\tau_W(F,G)(z)=F(x^*)+G(z-x^*)-1$.
Then for $\epsilon^*>0$,
\begin{align*}
    \tau_W(F,G)(z-\epsilon^*)&=\sup_{x+y=z-\epsilon^*}\max(F(x)+G(y)-1,0)\geq F(x^*-\epsilon^*)+G(z-x^*)-1.
\end{align*}
And
\begin{align*}
    \tau_W(F,G)(z)-\tau_W(F,G)(z-\epsilon^*)&=F(x^*)+G(z-x^*)-1- \tau_W(F,G)(z-\epsilon^*)\\
    &\leq F(x^*)+G(z-x^*)-1-(F(x^*-\epsilon^*)+G(z-x^*)-1)\\
    &=F(x^*)-F(x^*-\epsilon^*) <\delta.
\end{align*}

Since $\delta$ is arbitrary, $\tau_W(F,G)(\cdot)$ is continuous at $z$ and we must have $\tau_W(F,G)(z)=\tau_W(F,G)(z-)$.
\end{proof}

Thus, it follows from
Theorem \ref{thm:lower_bound_ach_when_greater}
that the {\em only} time when the lower bound on $P(X+Y\leq z)$ is {\em not} achievable is when the best possible point limits for $P(X+Y\leq z)$ and $P(X+Y<z)$ are the same. This result can be quite surprising: it follows that the distribution implied for $X+Y$ via the construction of $C_t$, namely, $\sigma_{C_t}(F, G)(z)=P(X+Y\leq z)$, is {\em discontinuous} at $z$ only when $\tau_W(F,G)(z)$ is {\em continuous} at $z$, i.e. $\tau_W(F,G)(z-)=\tau_W(F,G)(z)$.

We present an example to show that we do not require the margins $F,G$ to have uniform distributions on $[0,1]$  for the lower bound $\tau_W(F,G)(z)$ on $P(X+Y\leq z)$ to be not achievable.

\begin{exmp}
 Let

\[ F(x) = \begin{cases} 
      0 & x< 0, \\
      x^2 & 0\leq x < 1,\\
      1 & x\geq 1,
   \end{cases} \quad \quad  G(y) = \begin{cases} 
      0 & y< 0, \\
      1-(1-y)^2 & 0\leq y < 1,\\
      1 & y\geq 1.
   \end{cases}
\]
$F$ is the distribution for a random variable $X$ following a triangular distribution with $a=0, b=c=1$ (equivalent to Beta$(2,1)$), while $G$ is the distribution for a random variable $Y$ following a triangular distribution with $a=c=0,b=1$ (equivalent to Beta$(1,2)$). \par
Suppose $z=1$. Then 
\begin{align*}
    \tau_W(F,G)(1)&= \sup_{x+y=1}\max\{F(x)+G(y)-1,0\}\\
    &=\sup_{x}\max\{F(x)+G(1-x)-1,0\}\\
    &=0.
\end{align*}
The lower bound of $0$ corresponds to the copula $C_0$ constructed to achieve the lower  Fr\'{e}chet–Hoeffding bound (in other words, $X,Y$ are perfectly negatively correlated). In this example, $X=1-Y$. So under $C_0$, $P(X+Y= 1) = P(X+Y\leq 1)=1$ and $P(X+Y<1)=0$.\par
In contrast, it is not possible to construct a copula such that $P(X+Y\leq 1)=\tau_W(F,G)(1)=0$, so this lower bound is not achievable for $J(1) = P(X+Y\leq 1)$.
\end{exmp}

\subsection{Characterization of achievability of the lower bound on $J(z)$}
Theorem \ref{thm:when_achi} will provide necessary and sufficient conditions for the lower bound on $J(z)=P(X+Y\leq z)$ to be achievable. Theorem \ref{thm:sum_mak} provides a useful summary of the results concerning both the Makarov upper and lower bounds for $J(z) =P(X+Y\leq z)$.
\begin{theorem}
\label{thm:when_achi}
       The Makarov lower bound $t=\tau_W(F,G)(z)$ on $P(X+Y\leq z)$ is not achievable at $z$ if and only if there exist $x^*, y^*$ with $x^*+y^*=z$ such that all following three conditions hold: (i) $F(x^*)+G(y^*)= \sup_{x+y=z}\{F(x)+G(y)\}\geq 1$; (ii) $F(x)+G(z-x)$ is constant for $x$ in a neighborhood $Nr(x^*)$ of $x^*$; (iii) the image of the set $\{x,y:x\in Nr(x^*), y=z-x\}$ under the $(F,G)$ mapping contains an open interval within the line segment $\{(u,v)\in [0,1]^2 \mid  u+v-1=t\}$.\footnote{The conditions can also be defined similarly using the neighborhood of $y^*$.}
\end{theorem}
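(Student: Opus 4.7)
My plan is to reduce the characterization to the two structural results already proved: Theorem \ref{thm:lower_bound_ach_when_greater}, which forces $\tau_W(F,G)(z-) = \tau_W(F,G)(z) = t$ whenever the bound fails to be achievable, and Theorem \ref{thm:equi_C_t}, which says that, in this regime, $C_t$ itself either achieves the bound or else no copula does. Thus the question reduces to characterizing when $C_t$ fails to achieve $t$, and I will prove the two directions of the equivalence separately.

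For the ``only if'' direction, I will first apply Theorem \ref{thm:lower_bound_ach_when_greater} to get $\tau_W(F,G)(z-) = t$, and then invoke Theorem \ref{thm:optimal}(i) to conclude that under $C_t$, $P(X+Y<z) = t$. Since the bound is not achievable, $C_t$ in particular must fail, so $C_t$ assigns strictly positive probability to $\{X+Y=z\}$, i.e., to the set $S_z \cap H_1$ in the notation of the proof of Theorem \ref{thm:discrete_ach}. Because $C_t$ spreads mass $1-t$ uniformly (with respect to $u$) along $H_1 = \{u+v-1 = t\}$, the intersection $S_z \cap H_1$ must contain a nondegenerate sub-interval; this is exactly what the mass-balance geometry in the proof of Theorem \ref{thm:equi_C_t} produces. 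Pulling this sub-interval back through the generalized inverses yields $x^*, y^*$ with $x^*+y^*=z$ and an open neighborhood $Nr(x^*)$ on which $F(x)+G(z-x) \equiv 1+t$ (giving (ii)) and whose image under $(F(\cdot), G(z-\cdot))$ contains the sub-interval of $H_1$ (giving (iii)). Condition (i) then follows since $F(x^*)+G(y^*) = 1+t \geq 1$ and, because $t = \sup_{x+y=z}\max\{F(x)+G(y)-1,0\} \geq 0$, one has $1+t = \sup_{x+y=z}\{F(x)+G(y)\}$.

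For the ``if'' direction, I assume (i)--(iii) and use (iii), via the continuity it forces on $F$ (and $G$) on the sub-interval of $Nr(x^*)$ underlying the open line segment, to send $\epsilon \to 0^+$ in $F(x)+G(z-x-\epsilon)$ and thereby conclude $\tau_W(F,G)(z-) = t$. Hence $C_t$ is the common lower-bound copula for both $P(X+Y<z)$ and $P(X+Y\leq z)$, and Theorem \ref{thm:optimal}(i) gives $P(X+Y<z) = t$ under $C_t$. The uniform mass of $C_t$ along $H_1$ combined with (iii) produces a strictly positive contribution to $P(X+Y=z)$, so $P(X+Y\leq z) > t$ under $C_t$; by Theorem \ref{thm:equi_C_t} this promotes to the statement that no copula achieves the bound, i.e., $t$ is not achievable at $z$.

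The step I expect to be the main obstacle is, in the ``only if'' direction, extracting a genuine nondegenerate line segment (rather than merely a positive-measure subset) of $S_z \cap H_1$ from the failure of $C_t$, and then checking that this segment pulls back to a neighborhood $Nr(x^*)$ on which $F$ and $G$ behave nicely enough for (ii) and (iii) to be stated in terms of $x^*$. This is a topological refinement of the measure-theoretic observation that $C_t$ puts positive mass on $\{X+Y=z\}$, and it relies on $C_t$'s uniform-on-$H_1$ structure, Lemma \ref{lemma:klessz}, and the right-continuity of $F, G$. The remainder is essentially bookkeeping with $\tau_W$ and applications of results already established in the paper.
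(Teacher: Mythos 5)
Your proposal is correct and follows essentially the same route as the paper's proof: reduce achievability to the single copula $C_t$ via Theorems \ref{thm:lower_bound_ach_when_greater} and \ref{thm:equi_C_t}, use Theorem \ref{thm:optimal}(i) to pin $P(X+Y<z)=t$ under $C_t$, and translate the positive $C_t$-mass on $S_z\cap H_1$ (via the uniform distribution of mass along $H_1$) into a nondegenerate segment that pulls back to conditions (i)--(iii), and conversely. The only cosmetic difference is in the converse, where you re-derive $\tau_W(F,G)(z-)=t$ directly from the continuity forced by (iii), while the paper gets the same fact by citing the contrapositive of Theorem \ref{thm:lower_bound_ach_when_greater} after showing $C_t$ fails.
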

\begin{proof}

    If the Makarov lower bound $\tau_W(F,G)(z)$ on $P(X+Y\leq z)$ is not achievable at $z$, then Theorem \ref{thm:lower_bound_ach_when_greater} implies that we have to have 
$\tau_W(F,G)(z)=\tau_W(F,G)(z-)=t$. Furthermore, when the lower bound on $P(X+Y\leq z)$ is not achievable, it is not achievable under any copula, which includes $C_t$. Note that 
    by  (\ref{eq:frank-lower})
in Theorem \ref{thm:optimal},
under $C_t$, $P(X+Y\!<\!z) = t$.
Consequently,
    for $S_z:=\{(u,v)\in [0,1]^2 \mid  F^{-1}(u)+G^{-1}(v)=z\}$ and $H_1:=\{(u,v)\in [0,1]^2 \mid u+v-1=t\}$, $P(X+Y\!=\!z) = \iint_{S_z\cap H_1}dC_t(u,v)>0$, 
    since otherwise we would also have $P(X+Y\leq z)=t$ under $C_t$.
    
    If (i) does not hold, then 
    either $\sup_{x+y=z}\{F(x)+G(y)\}< 1$
    or $\sup_{x+y=z}\{F(x)+G(y)\}\geq 1$ but is not achieved on the set 
    $x+y=z$. Note that by the definition of $S_z$, for any $(u,v)\in S_z$, $u+v \leq \sup_{x+y=z}\{F(x)+G(y)\}$.
    If $\sup_{x+y=z}\{F(x)+G(y)\}<1$, then $t=\tau_W(F,G)(z)=0$ by (\ref{eq:low_valid}) 
    hence $H_1$ is the line $u+v=1$ and thus
    the set $S_z$ does not intersect $H_1$. 
    On the other hand, if $\sup_{x+y=z}\{F(x)+G(y)\}\geq 1$, then again by (\ref{eq:low_valid}), $1+t = \sup_{x+y=z}\{F(x)+G(y)\}$. If there do not exist $x^*, y^*$ such that $F(x^*)+G(y^*)= \sup_{x+y=z}\{F(x)+G(y)\}$, then for any $(u,v)\in S_z$, $u+v < \sup_{x+y=z}\{F(x)+G(y)\}$, and  thus $S_z\cap H_1 =\emptyset$ by definition of $H_1$. Hence in both sub-cases, the lower bound is achieved under $C_t$, which is a contradiction. 
    
    Now suppose that (i) holds. Since $C_t$ assigns mass uniformly to the set $H_1$, 
    and by hypothesis, 
     $\iint_{S_z\cap H_1}dC_t(u,v)>0$,
    there exists a line segment of positive length contained in
    $S_z\cap H_1$. Hence we may choose $u^*,v^*$ in the interior of this segment, such that there exist $x^*,y^*$ with 
    $ (F(x^*), G(y^*)) = (u^*,v^*)$
    and there is a  neighborhood of  $ (u^*,v^*)$ in $H_1$ that is contained in $S_z$, thus establishing (iii). (ii) also needs to hold by definition of $H_1$. 

    \medskip

    For the converse, if (i), (ii), (iii) hold, then $S_z$ contains a non-zero measure set in $H_1$. Since $C_t$ assigns mass uniformly to the set $H_1$, $\iint_{S_z\cap H_1}dC_t(u,v)>0$. This implies $P(X+Y=z)>0$ under $C_t$ and the lower bound $\tau_W(F,G)(z)$ will not be achievable at $z$ under $C_t$. Furthermore, by contrapositive of Theorem \ref{thm:lower_bound_ach_when_greater}, (i), (ii), (iii) imply that $\tau_W(F,G)(z)=\tau_W(F,G)(z-)$. Finally it then follows by Theorem \ref{thm:equi_C_t}, that since $C_t$ does not achieve the lower bound $\tau_W(F,G)(z)$ on $P(X+Y\leq z)$, this bound will not be achievable by any copula.
\end{proof}
\begin{theorem}\label{thm:sum_mak}
    The Makarov upper $\left(\rho_W(F,G)(z)\right)$
    and lower $(\tau_W(F,G)(z))$
    bounds on $P(X+Y\leq z)$ are pointwise best-possible,%
    \footnote{See Definition \ref{def:best_p}.}.
    The Makarov upper bound is achievable for each $z\in \mathbb{R}$, 
    but there may exist $z\in \mathbb{R}$ such that the
    lower bound 
    is not achievable.
  
    Conversely, the Makarov upper $\left(\rho_W(F,G)(z-)\right)$
    and lower $(\tau_W(F,G)(z-))$
    bounds on $P(X+Y< z)$ are pointwise best-possible.
    The Makarov lower bound is achievable for each $z\in \mathbb{R}$, 
    but there may exist $z\in \mathbb{R}$ such that the
    upper bound  is not achievable.
        
    The Makarov bounds are, in general, not uniformly sharp.\footnote{For given $F,G$, there does not exist a single joint distribution that achieves the bounds for all $z$. For example, the construction of the copula $C_t$ depends on the value $z$ in general.}
\end{theorem}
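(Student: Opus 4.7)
The plan is to assemble the preceding results of this section; almost every component has been established, so the only genuinely new work will be a short symmetry argument to handle the left-limit statements. For the bounds on $P(X+Y\leq z)$, pointwise best-possibility of the lower bound $\tau_W(F,G)(z)$ is the content of Theorem \ref{thm:no_loose_end}, while achievability of the upper bound $\rho_W(F,G)(z)$ (and hence its pointwise best-possibility) is Theorem \ref{thm:optimal}(ii). Non-achievability of the lower bound for certain $z$ is witnessed by Example \ref{ex:two} and fully characterized by Theorem \ref{thm:when_achi}.

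For the bounds on $P(X+Y<z)$, I would pass through the reflection $X' = -X$, $Y' = -Y$, whose distribution functions satisfy $F'(x) = 1 - \widetilde{F}(-x)$ and $G'(y) = 1 - \widetilde{G}(-y)$. Combined with the identities $\tau_W(\widetilde{F},\widetilde{G})(z) = \tau_W(F,G)(z-)$ and $\rho_W(\widetilde{F},\widetilde{G})(z) = \rho_W(F,G)(z-)$ from the footnote of Theorem \ref{thm:optimal}, a direct computation will give
\begin{align*}
\tau_W(F',G')(-z) &= 1 - \rho_W(F,G)(z-),\\
\rho_W(F',G')(-z) &= 1 - \tau_W(F,G)(z-).
\end{align*}
Since $P(X+Y<z) = 1 - P(X'+Y' \leq -z)$, applying Theorems \ref{thm:no_loose_end} and \ref{thm:optimal}(ii) to $(X',Y')$ transfers pointwise best-possibility of both bounds and achievability of the lower bound onto the $P(X+Y<z)$ setting. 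The achievability of $\tau_W(F,G)(z-)$ also follows directly from Theorem \ref{thm:optimal}(i). For non-achievability of the upper bound I would take $X,Y$ uniform on $[0,1]$ and $z=1$: then $\rho_W(F,G)(1-)=1$, but $P(X+Y<1)=1$ is impossible because $\E[X+Y]=1$.

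Finally, for non-uniform sharpness I would invoke the fact that the optimal copula $C_t$ supplied by Theorem \ref{thm:optimal} depends on $t=\tau_W(F,G)(z)$ (respectively $r=\rho_W(F,G)(z)$), so the required copula varies with $z$. A concrete obstruction is provided by independent Bernoulli$(1/2)$ margins: simultaneous attainment of $\tau_W$ at $z=0,1,2$ would force $P(X=1,Y=0)=0$ together with $P(Y=0)=1/2$, which is contradictory. Hence no single joint distribution realises the bound for every $z\in\mathbb{R}$.

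The main obstacle is purely bookkeeping in the reflection computation---tracking how left- and right-limits behave under the change of variables, and checking that suprema and infima over $\{x+y=z\}$ translate correctly to those over $\{u+v=-z\}$---rather than any conceptual difficulty, as all the substantive content has been proved in the earlier theorems of this section.
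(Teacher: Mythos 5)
Your proposal is correct and follows essentially the same route as the paper, which presents this theorem as a summary assembled from Theorems \ref{thm:optimal}, \ref{thm:no_loose_end}, and \ref{thm:when_achi} together with Example \ref{ex:two} (the same uniform-margins example you use for the unattainable upper bound on $P(X+Y<1)$, and the footnote's observation that $C_t$ depends on $z$ for non-uniform sharpness). Your reflection argument via $X'=-X$, $Y'=-Y$ and the explicit Bernoulli counterexample are sound, slightly more detailed fillings-in of steps the paper leaves implicit (the left-limit statements are handled there via the identities of Appendix \ref{app:relation_left_right_cdf_bounds} and Theorems \ref{thm:fns_exten_low}--\ref{thm:fns_exten_up}).
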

The following tables summarize when the Makarov bounds are always achievable (for all $z\in\mathbb{R}$ and any $F,G$) under different definitions of distribution functions.

\begin{table}[htbp]
  \centering
  \begin{tabular}{lcc}
    \toprule
    \multirow{2}{*}{\textbf{Bound}} & \multicolumn{2}{c}{$P(X + Y \leq z)$} \\
    \cmidrule(lr){2-3}
                  & Always Achievable & Pointwise Best-Possible \\
    \midrule
    Makarov Upper Bound $\rho_W(F,G)(z)$ & $\checkmark$ & $\checkmark$\\
    Makarov Lower Bound $\tau_W(F,G)(z)$ & \xmark  & $\checkmark$ \\
    \addlinespace
    \multirow{2}{*}{\textbf{Bound}} & \multicolumn{2}{c}{$P(X + Y < z)$} \\
    \cmidrule(lr){2-3}
                  & Always Achievable & Pointwise Best-Possible \\
    \midrule
    Makarov Upper Bound $\rho_W(F,G)(z-)$ & \xmark & $\checkmark$ \\
    Makarov Lower Bound $\tau_W(F,G)(z-)$ & $\checkmark$ & $\checkmark$\\
    \bottomrule
  \end{tabular}
    \caption{The Makarov upper bound $\rho_W(F,G)(z)$ on $P(X+Y\leq z)$ and the Makarov lower bound $\tau_W(F,G)(z-)$ on $P(X+Y< z)$ are always achievable for any given marginals $F,G$ and for all $z\in \mathbb{R}$. The achievabilities of the Makarov upper bound $\rho_W(F,G)(z-)$ on $P(X+Y<z)$ and the Makarov lower bound $\tau_W(F,G)(z)$ on $P(X+Y\leq z)$ are margin specific and depend on $z$. See Example \ref{ex:two} and Theorem \ref{thm:when_achi}. }
\end{table}

\subsection{Summary Theorems on Achievability and Sharpness}

We now add our new results to
the main result of \cite{frank1987best} given in Theorem \ref{thm:optimal}. Theorems \ref{thm:fns_exten_low} and \ref{thm:fns_exten_up} extend Theorem 3.2 in \cite{frank1987best} and provide a holistic picture of the achievability and sharpness of Makarov bounds on the sum of two random variables. 

Parts (i) of Theorems \ref{thm:fns_exten_low} and \ref{thm:fns_exten_up} follow directly from \cite{frank1987best} and \cite{nelsen2006introduction}; parts (ii) are stated in \cite{makarov1982estimates} and \cite{ruschendorf1982random} but not explicitly stated in \cite{frank1987best}; parts (iii) and (iv) follow from our new results in this section.
\begin{theorem}[Lower Makarov bounds]\label{thm:fns_exten_low}
    Let $F$ and $G$ be two fixed distribution functions. For any $z\in(-\infty, \infty)$:\\
(i) There exists a copula $C_{t^*}$\footnote{Previously defined in the proof of Theorem \ref{thm:discrete_ach}.}, dependent only on the value $t^*$ of $\tau_W(F, G)$ at $z-$, such that
\begin{align*}
    \sigma_{C_{t^*}}(F, G)(z-)=\tau_W(F, G)(z-)=t^*.
\end{align*}
In other words, the lower bound on $P(X+Y<z)$ is always achievable.

\noindent(ii) We have
\begin{align*}
    \tau_W(F,G)(z)=\inf_{C}\sigma_{C}(F,G)(z),
\end{align*}
where the infimum is taken over all copulas $C$. In other words, $t=\tau_W(F,G)(z)$ is the infimum of $P(X+Y\leq z)$ for all possible joint distributions of $X,Y$.

\noindent(iii) If $t^*=\tau_W(F,G)(z-)<t=\tau_W(F,G)(z)$, then there exists a copula $C_{t}$ dependent only on $t$ such that
\begin{align*}
    \sigma_{C_t}(F,G)(z)=\tau_W(F,G)(z).
\end{align*}
That is, the lower bound on $P(X+Y\leq z)$ can be achieved by $C_t$ when $\tau_W(F,G)(z-)<\tau_W(F,G)(z)$. We also have $\tau_W(F,G)(z)=\min_{C}\sigma_{C}(F,G)(z)$ where the minimun is taken over all copulas.

\noindent(iv) If there is no copula $C^*$ such that 
\begin{align*}
    \sigma_{C^*}(F,G)(z)=\tau_W(F,G)(z),
\end{align*}
then $\tau_W(F,G)(z)=\tau_W(F,G)(z-)$. (iv) is a contrapositive of (iii) which states that if the lower bound on $P(X+Y\leq z)$ is not achievable by any joint distribution, then the lower bounds for $P(X+Y\leq z)$ and $P(X+Y<z)$ must be the same.

\end{theorem}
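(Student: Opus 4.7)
The plan is to assemble the four parts of Theorem \ref{thm:fns_exten_low} by citing the theorems already proved earlier in Section \ref{sec:sharpness}, since the statement is essentially a consolidated restatement of those results.

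For part (i), I would apply Theorem \ref{thm:optimal}(i) directly with $t^{*}=\tau_W(F,G)(z-)$, which supplies a copula $C_{t^{*}}$ (built explicitly in the proof of Theorem \ref{thm:discrete_ach}) satisfying $\sigma_{C_{t^{*}}}(F,G)(z-)=\tau_W(F,G)(z-)$. No further argument is needed beyond noting that Sklar's theorem lets us translate the copula to a joint distribution with the prescribed marginals.

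For part (ii), I would establish the two inequalities separately. The inequality $\tau_W(F,G)(z)\leq \sigma_C(F,G)(z)$ for every copula $C$ is immediate from Theorem \ref{Thm:franks} (the validity portion of the Makarov lower bound), so $\tau_W(F,G)(z)\leq \inf_C \sigma_C(F,G)(z)$. For the reverse direction, Theorem \ref{thm:no_loose_end} states that for every $\epsilon>0$ there exists a copula $C_{s,\epsilon}$ with $\sigma_{C_{s,\epsilon}}(F,G)(z)<\tau_W(F,G)(z)+\epsilon$, so $\inf_C \sigma_C(F,G)(z)\leq \tau_W(F,G)(z)$. Combining these gives equality.

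Part (iii) is the substantive part and is exactly the content of Theorem \ref{thm:lower_bound_ach_when_greater}, which guarantees that whenever $\tau_W(F,G)(z-)<\tau_W(F,G)(z)$ the copula $C_t$ with $t=\tau_W(F,G)(z)$ achieves the lower bound on $P(X+Y\leq z)$; I would simply invoke it. The final sentence of (iii), asserting $\tau_W(F,G)(z)=\min_{C}\sigma_{C}(F,G)(z)$, follows by combining part (ii) (infimum equals $\tau_W(F,G)(z)$) with the existence of an achieving $C_t$ (so the infimum is attained, hence a minimum). Part (iv) is the logical contrapositive of part (iii) and requires only a one-line reformulation, not a new argument.

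The main obstacle is essentially absent here, since all the heavy lifting has been done in Theorems \ref{thm:optimal}, \ref{thm:no_loose_end}, and \ref{thm:lower_bound_ach_when_greater}; the only care required is bookkeeping about left limits versus values at $z$ (distinctions (ii) and (iii) from the introduction) so that the right version of the Makarov bound is paired with the right probability ($P(X+Y<z)$ in part (i) versus $P(X+Y\leq z)$ in parts (ii)--(iv)), and ensuring that the explicit construction of $C_t$ referenced in parts (i) and (iii) is the same object introduced in the proof of Theorem \ref{thm:discrete_ach}.
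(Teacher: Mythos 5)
Your proposal is correct and matches the paper's own treatment: the paper offers no separate proof of Theorem \ref{thm:fns_exten_low}, instead noting that (i) follows from Theorem \ref{thm:optimal}(i) (via \cite{frank1987best} and \cite{nelsen2006introduction}), (ii) from the validity bound in Theorem \ref{Thm:franks} together with Theorem \ref{thm:no_loose_end}, (iii) from Theorem \ref{thm:lower_bound_ach_when_greater} combined with (ii) to upgrade the infimum to a minimum, and (iv) as the contrapositive of (iii) using monotonicity of $\tau_W$. Your assembly of these pieces, including the two-inequality argument for (ii), is exactly the intended argument.
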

Similarly, we can state the theorems for the upper bounds with a slight asymmetry.
\begin{theorem}[Upper Makarov bounds]\label{thm:fns_exten_up}
    Let $F$ and $G$ be two fixed distribution functions. For any $z\in(-\infty, \infty)$:\\
(i) There exists a copula $C_r$, dependent only on the value $r$ of $\rho_W(F, G)(z)$, such that
\begin{align*}
   \sigma_{C_r}(F, G)(z)=\rho_W(F, G)(z)=r. 
\end{align*}
In other words, the upper bound on $P(X+Y\leq z)$ is always achievable.

\noindent(ii) We have
\begin{align*}
    \rho_W(F,G)(z-)=\sup_{C}\sigma_{C}(F,G)(z-),
\end{align*}
where the supremum is taken over all copulas $C$. In other words, $r^*=\rho_W(F,G)(z-)$ is the supremum of $P(X+Y< z)$ for all possible joint distributions of $X,Y$.

\noindent(iii) If $r^*=\rho_W(F,G)(z-)<r=\rho_W(F,G)(z)$, then there exists a copula $C_{r*}$ dependent only on $r^*$ such that
\begin{align*}
    \sigma_{C_{r^*}}(F,G)(z-)=\rho_W(F,G)(z-).
\end{align*}
That is, the upper bound on $P(X+Y< z)$ can be achieved by $C_{r^*}$ when $\rho_W(F,G)(z-)<\rho_W(F,G)(z)$. We also have $\rho_W(F,G)(z)=\max_{C}\sigma_{C}(F,G)(z)$ where the maximum is taken over all copulas.

\noindent(iv) If there is no copula $C^*$ such that 
\begin{align*}
    \sigma_{C^*}(F,G)(z-)=\rho_W(F,G)(z-),
\end{align*}
then $\rho_W(F,G)(z)=\rho_W(F,G)(z-)$. (iv) is a contrapositive of (iii) which states that if the upper bound on $P(X+Y< z)$ is not achievable by any joint distribution, then the upper bounds for $P(X+Y\leq z)$ and $P(X+Y<z)$ must be the same.

\end{theorem}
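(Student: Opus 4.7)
The plan is to handle each of the four parts separately, leveraging Theorem~\ref{thm:fns_exten_low} and the tools already developed so that only part~(iii) requires substantive new work. Part~(i) is simply Theorem~\ref{thm:optimal}(ii) restated in the current notation. For part~(ii), I would mirror the argument of Theorem~\ref{thm:no_loose_end}: Theorem~\ref{Thm:franks} gives $\sigma_C(F,G)(z')\leq\rho_W(F,G)(z')$ for every copula $C$ and every $z'$, and sending $z'\to z^-$ yields $\sigma_C(F,G)(z-)\leq\rho_W(F,G)(z-)$. For the reverse inequality, given $\epsilon>0$ pick $z'<z$ with $\rho_W(F,G)(z')>\rho_W(F,G)(z-)-\epsilon$ (possible by definition of the left limit), apply part~(i) at $z'$ to obtain a copula $C_{r'}$ with $\sigma_{C_{r'}}(F,G)(z')=\rho_W(F,G)(z')$, and use monotonicity of the cdf $\sigma_{C_{r'}}(F,G)(\cdot)$ to conclude $\sigma_{C_{r'}}(F,G)(z-)\geq\sigma_{C_{r'}}(F,G)(z')>\rho_W(F,G)(z-)-\epsilon$.

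For part~(iii), my plan is a reflection argument that reduces the statement to Theorem~\ref{thm:lower_bound_ach_when_greater}. Set $X'=-X$, $Y'=-Y$, with marginal cdfs $F'(x)=1-\widetilde{F}(-x)$ and $G'(y)=1-\widetilde{G}(-y)$. The identity $P(X'+Y'\leq -z)=1-P(X+Y<z)$ shows that maximizing $P(X+Y<z)$ over joint laws with marginals $(F,G)$ is equivalent to minimizing $P(X'+Y'\leq -z)$ over joint laws with marginals $(F',G')$, and the two families of joint laws (and hence the associated copulas, via the survival-copula transformation) are in bijection. A direct calculation using $-\max(-a,-b)=\min(a,b)$ and the substitution $(x,y)\mapsto(-x,-y)$ gives
\[
\tau_W(F',G')(-z)=1-\rho_W(\widetilde F,\widetilde G)(z)=1-\rho_W(F,G)(z-),
\]
where the second equality is the identity from the footnote in the excerpt. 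An argument analogous to Lemma~\ref{lemma: rc_c} shows that $\rho_W(F,G)(\cdot)$ is non-decreasing and right continuous, and combining this with $\rho_W(\widetilde F,\widetilde G)(s)=\rho_W(F,G)(s-)$ and a sandwich as $s\downarrow z$ yields $\tau_W(F',G')((-z)-)=1-\rho_W(F,G)(z)$.

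Consequently, the hypothesis $r^*=\rho_W(F,G)(z-)<\rho_W(F,G)(z)=r$ is equivalent to $\tau_W(F',G')(-z)>\tau_W(F',G')((-z)-)$, so Theorem~\ref{thm:lower_bound_ach_when_greater} applied to $(X',Y')$ at $-z$ produces a copula $C_{t'}$, depending only on $t'=1-r^*$, with $\sigma_{C_{t'}}(F',G')(-z)=t'$. Pulling back through the reflection (or, equivalently, defining $C_{r^*}(u,v)=u+v-1+C_{t'}(1-u,1-v)$, which one checks is a valid copula) gives the desired copula $C_{r^*}$ on $(X,Y)$, depending only on $r^*$, with $\sigma_{C_{r^*}}(F,G)(z-)=1-t'=r^*$. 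The auxiliary assertion $\rho_W(F,G)(z)=\max_C\sigma_C(F,G)(z)$ is immediate from part~(i), and part~(iv) is the contrapositive of~(iii). The main obstacle is the bookkeeping in part~(iii): verifying right continuity of $\rho_W(F,G)$ and tracking how left- vs.\ right-continuous margins, left vs.\ right limits of $\tau_W$ and $\rho_W$, and the copulas themselves all transform under the reflection are easy to get wrong, even though no single step is deep.
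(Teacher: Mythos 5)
Your proposal is correct and takes essentially the route the paper intends: the paper gives no written proof of Theorem~\ref{thm:fns_exten_up}, asserting only that (i) follows from Theorem~\ref{thm:optimal}(ii), (ii) from Makarov/R\"uschendorf, and (iii)--(iv) ``from our new results in this section,'' and your reflection $(X,Y)\mapsto(-X,-Y)$ is exactly the duality needed to transfer Theorem~\ref{thm:lower_bound_ach_when_greater} to the upper bound. The key identities $\tau_W(F',G')(-z)=1-\rho_W(F,G)(z-)$ and $\tau_W(F',G')((-z)-)=1-\rho_W(F,G)(z)$ both check out (the latter via monotonicity and right-continuity of $\rho_W$, which hold by the same argument as Lemma~\ref{lemma: rc_c}), and the survival-copula pullback $C_{r^*}(u,v)=u+v-1+C_{t'}(1-u,1-v)$ is exact even for discontinuous margins since copulas are Lipschitz, so your bookkeeping worries do not materialize.
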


\subsection{Examples Revisited}

We now illustrate these results by revisiting Examples \ref{ex:one} and \ref{ex:two}. Figure \ref{fig:summary_ex11_12} summarizes the achievability of the Makarov upper and lower bounds in the two examples.


\pgfplotsset{
    soldot/.style={color=blue,only marks,mark=*}, 
    holdot/.style={color=blue,fill=white,only marks,mark=*} 
}
\tikzset{every picture/.style={line width=0.75pt}} 
\begin{figure}[h!]
\centering
\includegraphics[width=\linewidth]{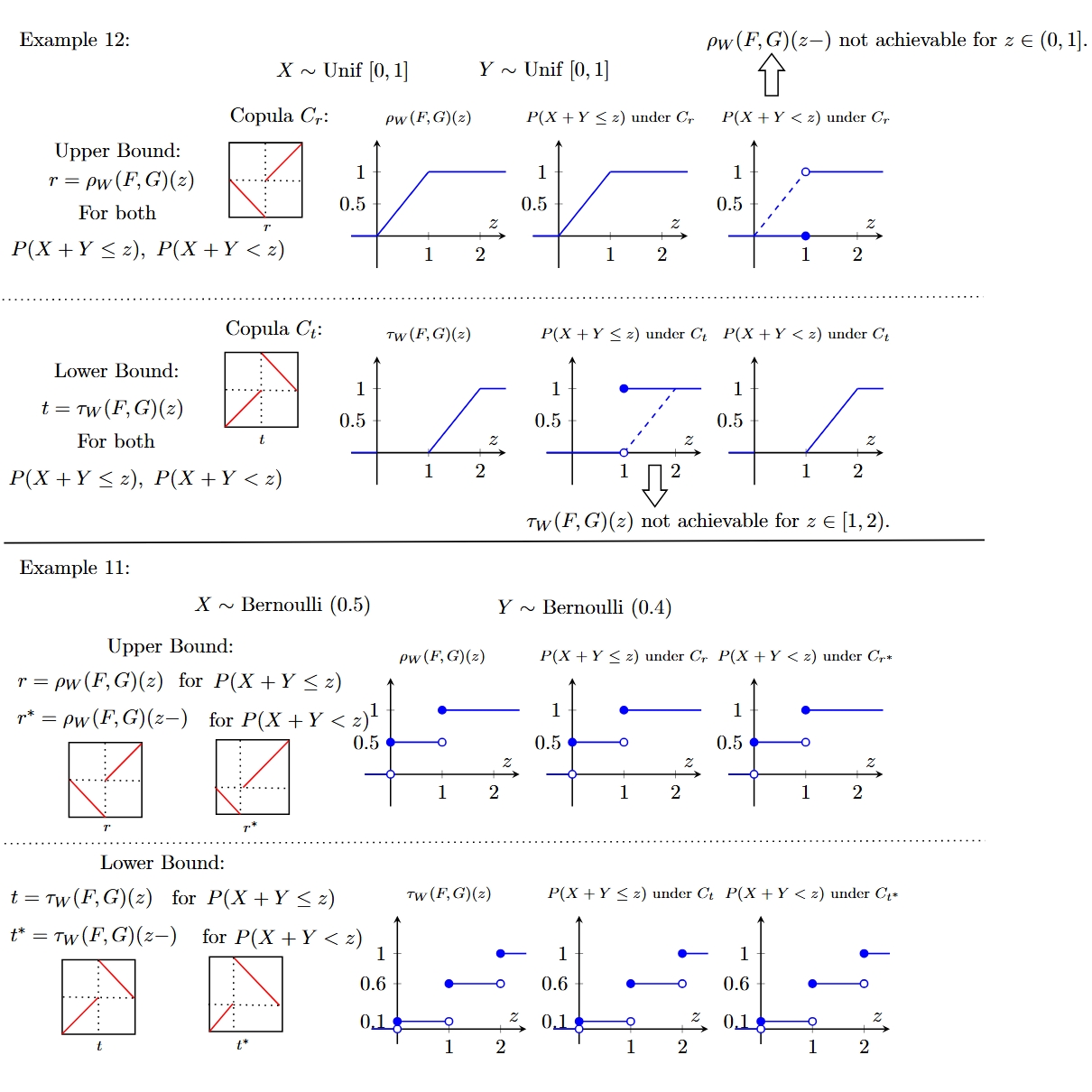}
\caption{In the right two panels of Example \ref{ex:two}, the dashed lines show the bounds $\rho_W(F,G)(z)$ and $\tau_W(F,G)(z)$ and the solid lines show what is achieved under the copulas; the difference indicates the bounds are not achievable. Hence only the upper bound on $P(X+Y\leq z)$ and the lower bound on $P(X+Y\leq  z)$ can be achieved for all $z$. The upper and lower bounds for $P(X+Y< z)$ and $P(X+Y<z)$ are the same, which follows from part (iv) of Theorem \ref{thm:fns_exten_low} and \ref{thm:fns_exten_up}. In Example \ref{ex:one}, the upper and lower bounds for $P(X+Y\leq z)$ and $P(X+Y<z)$ are different, all bounds on $P(X+Y\leq z)$ and $P(X+Y<z)$ are achieved but under different copula constructions.}
\label{fig:summary_ex11_12}
\end{figure}

\cite{makarov1982estimates} introduced the supremum of $P(X + Y \leq z)$, and \cite{frank1987best} showed that the copula $C_r$, where $r = \rho_{W}(F, G)(z)$, attains this supremum. This naturally leads to the association of $C_r$ with the supremum of $P(X + Y < z)$, especially when $F$ and $G$ are continuous, making the suprema of  $P(X + Y \leq z)$ and $P(X + Y < z)$ identical.
However, it is important to note that the supremum for $P(X + Y < z)$ is not always achievable, and the value obtained using the copula  $C_r$—where $r = \rho_{W}(F, G)(z-) = \rho_{W}(F, G)(z)$—can differ from the true (unachievable) supremum by as much as $1$. In other words, using the copula $C_r$ to estimate the upper bound for $P(X + Y < z)$ is suboptimal and may significantly deviate from the optimal bound.
To see this consider Example \ref{ex:two} displayed in the top panel of Figure \ref{fig:summary_ex11_12}. Here we see that when $z=1$, $r=\rho_{W}(F,G)(z-)=\rho_{W}(F,G)(z)=1$. However, in this case $C_{r}$ corresponds to the degenerate joint distribution under which $X+Y=1$, implying that $P(X+Y<z)=0$. In other words, in this example the Makarov upper bound on $P(X+Y<1)$ is $1$, but the copula $C_r$ which achieves the upper bound on $P(X+Y \leq 1)$ has
$P(X+Y< 1)=0$ (!) which is as different as it is possible to be from the supremum value. 
More generally, in this example, for $z\in (0,1]$, the supremum for $P(X+Y<z)$ is $z$, yet under $C_r = C_z$ we have $P(X+Y<z) =0$.

Note that although the copula $C_r$ (entirely) fails to achieve the upper bound on $P(X+Y<z)$, we may use another copula $C_{r^*}$ where $r^*<r$ to obtain a joint distribution under which $P(X+Y<z)$ is arbitrarily close to the unattainable supremum. Specifically, in this example,  
the copula $C_{1-\epsilon}$, for some small $\epsilon>0$, gives a joint distribution under which 
$P(X+Y < 1)$ is arbitrarily close to the (unattainable) upper bound of $1$. 
This is because under $C_{1-\epsilon}$, 
 $1-\epsilon = P(X+Y= 1-\epsilon) =
 P(X+Y\leq 1-\epsilon) < P(X+Y < 1)$.

Conversely, the difference between the (unachievable) infimum of the (achievable) 
lower bounds on $P(X+Y\leq z)$ and the value achieved under the copula $C_t$ where $t=\tau_{W}(F,G)(z)$ can also be $1$.



\section{Sharp bounds on the difference}
\label{sec:frank}

Now we consider the closely related problem concerning bounds on the distribution of the difference of two random variables with fixed marginals. 

As we will describe in the next section \ref{sec:fan} below, bounds on the distribution function of the difference of two random variables with fixed marginals have been widely used for partial identification of individual causal effects (see \citealt{fan2010sharp}, \citealt{imbens2018causal}, \citealt{firpo2019partial}). 

In this section, we will first focus on the theoretical results. Specifically, we will apply our previous results concerning best possible bounds on sums, but replacing one variable by its negation. Although these results are corollaries of the previous results, we include them here to connect various results in the literature and for the convenience of readers.

Let $X$ and $Y$ be random variables with respective distribution functions $F$ and $G$ fixed. Let $\Delta=X-Y$ be the difference of random variables $X,Y$. Let $J_\Delta(\cdot)$ be the distribution function of $\Delta$.
\begin{theorem}\label{bounds}
For any given value $\delta$, best-possible bounds $\underline{J_\Delta}(\delta)\leq J_\Delta(\delta)\leq \overline{J_\Delta}(\delta)$
on $J_\Delta(\delta)$ are given by
 \begin{align*}
\underline{J_\Delta}(\delta)&=\sup_{x-y=\delta}\max\{F(x)-P(Y<y),0\}\nonumber \\
&= \sup_{x-y=\delta}\max\{F(x)-G(y)+P(Y=y),0\};\\[16pt]
\overline{J_\Delta}(\delta)&=1+\inf_{x-y=\delta} \min\{F(x)-P(Y<y),0\}\nonumber \\
&=  1+\inf_{x-y=\delta} \min\{F(x)-G(y)+P(Y=y),0\}.
\end{align*}

\end{theorem}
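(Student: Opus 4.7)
The plan is to reduce the statement to the already-established sum case by the substitution $Y' = -Y$, so that $\Delta = X - Y = X + Y'$ and $J_\Delta(\delta) = P(X + Y' \leq \delta)$. Since negation is a bijection, the set of joint distributions of $(X,Y)$ with marginals $(F,G)$ is in one-to-one correspondence with the set of joint distributions of $(X,Y')$ with marginals $(F, G')$ where $G'$ is the cdf of $Y'$; hence infima and suprema of $P(X+Y' \leq \delta)$ transfer immediately to infima and suprema of $P(X-Y \leq \delta)$.

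First I would compute $G'$ explicitly. Using right-continuity of $G$,
\begin{align*}
G'(y') \;=\; P(-Y \leq y') \;=\; P(Y \geq -y') \;=\; 1 - P(Y < -y') \;=\; 1 - \widetilde{G}(-y').
\end{align*}
The appearance of $\widetilde{G}$ (the left-continuous version of $G$) rather than $G$ itself is the key subtlety of the argument and is ultimately what produces the $P(Y < y) = G(y) - P(Y=y)$ term in the final formulas; this is the main obstacle, since one must resist the temptation to replace $\widetilde{G}(-y')$ with $1 - G(-y')$, which is only correct when $G$ is continuous at $-y'$.

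Next I would apply the Makarov formulas \eqref{eq:low_valid} and \eqref{eq:up_valid} to the pair $(F, G')$. For the lower bound,
\begin{align*}
\tau_W(F, G')(\delta) \;=\; \sup_{x + y' = \delta} \max\{F(x) + G'(y') - 1,\, 0\} \;=\; \sup_{x + y' = \delta} \max\{F(x) - \widetilde{G}(-y'),\, 0\}.
\end{align*}
Substituting $y = -y'$ so that $x + y' = \delta$ becomes $x - y = \delta$ yields exactly $\underline{J_\Delta}(\delta)$. The analogous manipulation with $\rho_W$ gives $\overline{J_\Delta}(\delta)$. Either form in the theorem statement then follows by rewriting $\widetilde{G}(y) = G(y) - P(Y = y)$.

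Finally, best-possibility (in the sense of Definition \ref{def:best_p}) is inherited directly from Theorem \ref{thm:sum_mak} applied to the margins $(F, G')$: for any $\epsilon > 0$ and any $\delta$, a copula driving $P(X + Y' \leq \delta)$ to within $\epsilon$ of the Makarov bound induces, through $Y = -Y'$, a joint distribution of $(X,Y)$ with marginals $(F, G)$ that drives $P(X - Y \leq \delta)$ to within $\epsilon$ of $\underline{J_\Delta}(\delta)$ or $\overline{J_\Delta}(\delta)$. Since nothing in the sum-case argument for pointwise best-possibility requires any continuity hypothesis on the margins, no additional hypotheses on $F$ or $G$ are needed here.
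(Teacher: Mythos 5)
Your proposal is correct and takes essentially the same route as the paper: substitute $Y'=-Y$, compute $G'(y') = 1 - P(Y < -y') = 1 - G(-y') + P(Y=-y')$, apply the Makarov formulas to the pair $(F,G')$, and inherit validity and best-possibility from the sum case. Your explicit warning about not conflating $\widetilde{G}(-y')$ with $G(-y')$ is exactly the point the paper is making (and the error it identifies in Williamson and Downs), so the argument matches in both substance and emphasis.
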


The bounds in Theorem \ref{bounds} differ from those stated in \cite{fan2010sharp} and \cite{williamson1990probabilistic} by the inclusion of the point mass $P(Y=y)$ term in both the upper and lower bounds. As a consequence, the lower bound reported by these authors is not best possible; see
Theorem \ref{upper:equal} and comment below  Theorem \ref{upper:equal} for more details.

\subsection{Proof of Theorem \ref{bounds}}
The proof is a direct application of Theorem \ref{Thm:franks}. Consider a new variable $Y'=-Y$ with cdf $G'$. Then from equation (\ref{eq:low_valid}) and (\ref{eq:up_valid}), for any $\delta$, the bound on $P(\Delta\leq \delta)=P(X-Y\leq \delta)=P(X+Y'\leq \delta)$ is:
\begin{align*}
\underline{J_\Delta}(\delta)&=\sup_{x+y'=\delta}\max(F(x)+G'(y')-1,0), \\ 
\overline{J_\Delta}(\delta)&=1+\inf_{x+y'=\delta} \min(F(x)+G'(y')-1,0). 
\end{align*}
Note that 
\begin{align*}G'(y')&=P(-Y\leq y')\\
&=P(Y\geq -y')\\
&=1-P(Y<-y' )\\
&=1-G(-y') + P(Y  = -y').\end{align*}
Replace $y'$ with $-y$ and $G'(y')$ with $1-G(y)+P(Y=y)$, we get the best-possible bounds in Theorem $\ref{bounds}$. \\

\begin{remark}[]
In the case where $P(Y=y)=0$ for all $y$ (for example, when the distribution function of $Y$ is absolutely continuous with respect to the Lebesgue measure), we recover the best-possible bounds in Theorem $2$ of \cite{williamson1990probabilistic} and in Lemma $2.1$ of \cite{fan2010sharp}. However,  when $G$ is not absolutely continuous, the bounds in \cite{fan2010sharp}  and \cite{williamson1990probabilistic} can be different from the bounds in Theorem \ref{bounds} as the point mass $P(Y=y)$ can be nonzero for some $y\in\mathbb{R}$. In the Appendix \ref{App:A}, we identify the issue in the proof of Theorem 2 in \cite{williamson1990probabilistic}. Though we didn't address it explicitly, the same issue applies to the \cite{williamson1990probabilistic} bounds on the ratio of two random variables.
\end{remark}

\subsection{Implications of the new bounds}

At this point, Theorem \ref{bounds} seems to imply that the lower bound in \cite{williamson1990probabilistic} is valid but not necessarily best-possible and the upper bound might not be valid. The next theorem will establish that in fact of the upper bound on the cdf for the difference $\Delta=X-Y$ in \cite{williamson1990probabilistic} is valid even though the proof used in \cite{williamson1990probabilistic} is not correct.
\begin{theorem}\label{upper:equal}
For any random variables $X$ and $Y$ with respective cdfs $F(\cdot)$ and $G(\cdot)$,
\begin{align*}
    \inf_{x-y=\delta} \min\{F(x)-P(Y<y),0\}
&=  \inf_{x-y=\delta} \min\{F(x)-G(y),0\}.
\end{align*}
\end{theorem}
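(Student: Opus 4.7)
}

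The plan is to show the two inequalities separately. One direction is immediate from pointwise domination; the other direction exploits a right-approximation trick that absorbs any point mass of $Y$.

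\textbf{Easy direction ($\geq$).} Since $G(y)=P(Y\le y) = P(Y<y)+P(Y=y) \geq P(Y<y)$, we have $F(x)-P(Y<y) \geq F(x)-G(y)$ for every $x,y$. The map $u \mapsto \min\{u,0\}$ is nondecreasing, so
\[
\min\{F(x)-P(Y<y),0\} \;\geq\; \min\{F(x)-G(y),0\}
\]
pointwise in $(x,y)$, and in particular over the constraint set $x-y=\delta$. Taking the infimum of both sides over this set yields the $\geq$ inequality.

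\textbf{Harder direction ($\leq$).} I fix an arbitrary pair $(x_0,y_0)$ with $x_0-y_0=\delta$ and construct an approximating sequence. Set $x_n := x_0+1/n$ and $y_n := y_0+1/n$, so that $x_n-y_n=\delta$. Two limits are needed: (a) by right-continuity of $F$ (recall $F$ is defined as $P(X\le \cdot)$ in this paper), $F(x_n) \to F(x_0)$; (b) since $\{Y<y_n\}$ is a decreasing sequence of events with intersection $\{Y\le y_0\}$, countable additivity gives $P(Y<y_n) \to P(Y\le y_0) = G(y_0)$. Hence $F(x_n)-P(Y<y_n) \to F(x_0)-G(y_0)$, and by continuity of $u\mapsto\min\{u,0\}$,
\[
\min\{F(x_n)-P(Y<y_n),0\} \;\longrightarrow\; \min\{F(x_0)-G(y_0),0\}.
\]
Because each $(x_n,y_n)$ lies in the constraint set, the infimum on the left side of the claimed identity is at most the limit, i.e.
\[
\inf_{x-y=\delta}\min\{F(x)-P(Y<y),0\} \;\le\; \min\{F(x_0)-G(y_0),0\}.
\]
Taking the infimum over all $(x_0,y_0)$ with $x_0-y_0=\delta$ on the right side yields the $\leq$ inequality, completing the proof.

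\textbf{Where the subtlety lies.} The only nontrivial step is the right-approximation argument. The key insight is that the gap between $F(x)-P(Y<y)$ and $F(x)-G(y)$ is exactly the point mass $P(Y=y)$, and this point mass can be ``erased'' by nudging $y$ slightly upward: $P(Y<y_0+1/n)$ converges to $P(Y\le y_0)$ rather than $P(Y<y_0)$, precisely because the interval $(-\infty,y_0+1/n)$ contains $y_0$ itself. The simultaneous nudge of $x$ keeps the constraint $x-y=\delta$ intact and causes no loss because $F$ is right-continuous. This is also conceptually why the upper bound stated in \cite{williamson1990probabilistic} remains numerically valid even though the bounds derived via the point-mass-corrected formula in Theorem~\ref{bounds} are the tight ones in general: the infimum form of the upper bound is insensitive to the $P(Y=y)$ correction, whereas the supremum form of the lower bound is not.
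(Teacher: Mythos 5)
Your proof is correct and rests on the same mechanism as the paper's: the trivial pointwise domination $G(y)\geq P(Y<y)$ for one inequality, and for the other a monotone approximating sequence staying on the constraint set, exploiting right-continuity of $F$ together with continuity from above of the law of $Y$. The paper reaches the identity by first proving the sum version $\inf_x(F(x)+G(z-x))=\inf_x(F(x-)+G(z-x))$ (Proposition \ref{prop:eq_ru_makarov}) and then substituting $Y'=-Y$, whereas you run the same approximation directly in the difference coordinates; the two arguments are equivalent up to that change of variables.
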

The proof of Theorem \ref{upper:equal} is left to Appendix \ref{App:B}. Theorem \ref{upper:equal} implies that the upper bounds in \cite{williamson1990probabilistic} and \cite{fan2010sharp} coincide with the upper bounds we proposed in Theorem \ref{bounds}. Since the lower bounds that we propose in Theorem \ref{bounds} are greater than or equal to the lower bounds in \cite{williamson1990probabilistic}, \cite{fan2010sharp}, Theorem $\ref{upper:equal}$ establishes that somewhat surprisingly all these bounds are valid, though the lower bounds are not sharp. In Section \ref{subsec:app_CDF}, we demonstrate through an example that the lower bound proposed by \cite{williamson1990probabilistic} and \cite{fan2010sharp} is not sharp.

\begin{remark}
 Theorem $3$ in \cite{williamson1990probabilistic} states an  optimality result (analogous to Theorem \ref{thm:optimal} of our paper) contains an unnecessary exclusion: specifically, it states that the bounds are only achievable if $F$ and $G$ are not both discontinuous at some $x,y$ such that $x+y=z$. It appears that this additional unnecessary condition was added because Williamson and Downs fail to note that when $F$ and $G$ are discontinuous the bounds only take a strict subset of values in $[0,1]$. Consequently, only a subset of values need to be considered. 
\end{remark}

\section{A causal perspective}
\label{sec:fan}
Throughout this section, we consider a binary treatment $D=0,1$. Let $Y_{1}$ be the potential outcome were an individual to take the treatment and $Y_{0}$ be the potential outcome were an individual to take control. We assume the stable unit
treatment value assumption (SUTVA, \citealt{rubin1978bayesian}) that there is a single version of each treatment/control and no interference among the subjects. We define our parameter of interest  as the individual treatment effect: $\Delta=Y_1-Y_0$. \cite{fan2010sharp} Lemma $2.1$ stated the bounds on the distribution function of the individual treatment effect and claimed that they are sharp. We modify the bounds in \cite{fan2010sharp} based on Theorem \ref{bounds}. Let $F_1, F_0$ be the cumulative distribution function on $Y_1, Y_0$ respectively. Let $F_\Delta(\cdot)$ be the cdf for $\Delta$.
\begin{theorem}\label{causal_bounds}
For any given value $\delta$, best-possible bounds on $F_\Delta(\delta)$ are given by
 \begin{align}
F^L(\delta)&=\sup_y\max\{F_1(y)-P(Y_0<y-\delta),0\}\nonumber \\
&= \sup_y\max\{F_1(y)-F_0(y-\delta)+P(Y_0=y-\delta),0\}; \label{eq:causal_correct_lower}\\[16pt]
F^U(\delta)&=1+\inf_y \min\{F_1(y)-P(Y_0<y-\delta),0\}\nonumber \\
&=  1+\inf_y \min\{F_1(y)-F_0(y-\delta)+P(Y_0=y-\delta),0\}.\label{eq:causal_correct_upper}
\end{align}
\end{theorem}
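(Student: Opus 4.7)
The approach is to obtain Theorem \ref{causal_bounds} as an immediate corollary of Theorem \ref{bounds}, since the individual treatment effect $\Delta = Y_1 - Y_0$ is precisely the difference of two random variables with prescribed marginals $F_1$ and $F_0$. The identification strategy is the standard one in the partial identification literature: under SUTVA and a randomized treatment assignment, the marginals $F_1, F_0$ are point-identified from the observed data, whereas the joint distribution of $(Y_1, Y_0)$ is not; hence sharp bounds on $F_\Delta(\delta)$ are obtained by taking suprema and infima of $P(Y_1 - Y_0 \leq \delta)$ over all joint distributions consistent with the two fixed marginals, which is exactly the object Theorem \ref{bounds} bounds.

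To execute the application, I would set $X = Y_1$ with cdf $F = F_1$ and $Y = Y_0$ with cdf $G = F_0$ in Theorem \ref{bounds}. The constraint set $\{(x,y) : x - y = \delta\}$ appearing in the sup/inf of Theorem \ref{bounds} is then reparameterized by a single scalar $y$ representing a value of $Y_1$, with the corresponding value of $Y_0$ equal to $y - \delta$. Substituting $F_1(y)$ for $F(x)$ and using the identity $P(Y_0 < y - \delta) = F_0(y-\delta) - P(Y_0 = y - \delta)$ in place of $P(Y < y)$ in the expressions for $\underline{J_\Delta}$ and $\overline{J_\Delta}$ yields exactly the two forms displayed in (\ref{eq:causal_correct_lower}) and (\ref{eq:causal_correct_upper}).

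Pointwise best-possibility transfers directly from Theorem \ref{bounds}: for every $\delta$, no strictly larger lower bound nor strictly smaller upper bound is valid across the class of joint distributions of $(Y_1, Y_0)$ compatible with the fixed marginals. Since the argument is a relabeling plus a change of variable, I do not expect any substantive obstacle. The one place I would be careful is in emphasising, in the surrounding text, that the atom term $P(Y_0 = y - \delta)$ is exactly what distinguishes the corrected bounds here from those previously stated by \cite{fan2010sharp} and \cite{williamson1990probabilistic}; the previous expressions coincide with the present ones only when $F_0$ is continuous, and the discrepancy in the lower bound for general $F_0$ is the substantive point that will be illustrated in the example of Section \ref{subsec:app_CDF}.
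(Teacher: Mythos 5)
Your proposal is correct and follows essentially the same route as the paper: Theorem \ref{causal_bounds} is obtained there as a direct relabeling of Theorem \ref{bounds} with $X=Y_1$, $Y=Y_0$, $F=F_1$, $G=F_0$, the constraint $x-y=\delta$ parameterized by the single variable $y$, and the identity $P(Y_0<y-\delta)=F_0(y-\delta)-P(Y_0=y-\delta)$ giving the second displayed form. Your remarks on identification of the marginals under SUTVA/randomization and on the atom term distinguishing these bounds from those of \cite{fan2010sharp} match the paper's surrounding discussion.
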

Let $Y$ denote the observed outcome variable. Under consistency, $Y=Y_{0}$ when $D=0$ and $Y=Y_1$ when $D=1$. In practice, if we are willing to assume ignorability (for example, in randomized clinical trials
(RCTs)) or conditional ignorability, the marginal distributions $F_1(y)$ and $F_0(y)$ can be identified. Theorem \ref{causal_bounds} allows us to conclude best-possible bounds on the distribution function of the individual treatment effect. In the special case where $Y$ is ordinal, Proposition $1$ in \cite{lu2018treatment} can be recovered using Theorem \ref{bounds} and Theorem \ref{upper:equal}. \cite{lu2018treatment} consider a special case where $Y$ is non-negative and prove the bounds using a construction argument instead of the copula theory.
\begin{corollary}
The Fan-Park upper bound is best-possible.
\end{corollary}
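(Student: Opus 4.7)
The plan is to derive this corollary as an immediate consequence of combining Theorem \ref{causal_bounds} with Theorem \ref{upper:equal}, essentially by noting that although the corrected upper bound $F^U(\delta)$ stated in Theorem \ref{causal_bounds} contains a point-mass term $P(Y_0 = y-\delta)$ that is absent from the Fan--Park formulation, the two expressions are in fact identically equal.

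First, I would recall from Theorem \ref{causal_bounds} that
\begin{align*}
F^U(\delta) = 1 + \inf_y \min\{F_1(y) - F_0(y-\delta) + P(Y_0 = y-\delta),\, 0\}
\end{align*}
is a best-possible upper bound on $F_\Delta(\delta)$. Next, I would invoke Theorem \ref{upper:equal} with $X = Y_1$, $Y = Y_0$, $F = F_1$, and $G = F_0$, performing the substitution $y \mapsto y - \delta$ so that the constraint $x - y = \delta$ becomes a free infimum over $y$ (setting $x = y$ in the Fan--Park parameterization). Using the identity $P(Y_0 < y-\delta) = F_0(y-\delta) - P(Y_0 = y-\delta)$, Theorem \ref{upper:equal} then yields
\begin{align*}
\inf_y \min\{F_1(y) - F_0(y-\delta) + P(Y_0 = y-\delta),\, 0\}
= \inf_y \min\{F_1(y) - F_0(y-\delta),\, 0\}.
\end{align*}

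Adding $1$ to both sides shows that $F^U(\delta)$ coincides exactly with the Fan--Park upper bound $1 + \inf_y \min\{F_1(y) - F_0(y-\delta), 0\}$. Since Theorem \ref{causal_bounds} already established that $F^U(\delta)$ is best-possible, so is the Fan--Park upper bound. There is no real obstacle here; the only subtlety is being careful about the change of variables aligning the infimum indexed by $x - y = \delta$ in Theorem \ref{upper:equal} with the infimum over a single variable $y$ in the causal formulation. In contrast, the analogous result would \emph{fail} for the Fan--Park lower bound precisely because the suprema in Theorem \ref{bounds} do not admit an analogue of Theorem \ref{upper:equal}; the point-mass term genuinely matters on the lower side, and Section \ref{subsec:app_CDF} is used to exhibit a gap there.
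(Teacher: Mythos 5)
Your proof is correct and takes essentially the same route as the paper, which simply invokes Theorem \ref{upper:equal} with $X$ replaced by $Y_1$ and $Y$ replaced by $Y_0$ to show the Fan--Park upper bound coincides with the best-possible bound $F^U(\delta)$ of Theorem \ref{causal_bounds}. Your write-up just makes the change of variables and the identity $P(Y_0<y-\delta)=F_0(y-\delta)-P(Y_0=y-\delta)$ explicit, which the paper leaves implicit.
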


\noindent{\it Proof:} This follows directly from Theorem \ref{upper:equal} where $X$ is replace by $Y_1$ and $Y$ is replace by $Y_0$.\hfill $\Box$

\subsection{Application of Theorem $\ref{causal_bounds}$ on cdf bounds of ITE}\label{subsec:app_CDF}
Here we will present a simple example that applies our bounds in Theorem $\ref{causal_bounds}$ and compare them with the bounds in \cite{fan2010sharp}. 

Consider the case where we have a binary treatment variable $(D=0, 1)$ and a ternary response $(Y=0,1,2)$. Under randomization, the relationship between the counterfactual distribution
 $P(Y_0, Y_1)$ and the observed distributions $\left\{P(Y\mid D=0), P(Y\mid D=1)\right\}$ 
  is given by:
 $P(Y\!=\!i\mid D\!=\!j) = P( Y_j\!=\!i)$. Suppose we observe the marginals given in Table $\ref{example_table}$. We can parameterize the joint distribution with $4$ parameters $p,q,t,r$. Note that by the Fr\'{e}chet inequalities, $\max\{ P(Y_0=i)+P(Y_1=j)-1,0\}\leq P(Y_0=i,Y_1=j)\leq \min\{P(Y_0=i), P(Y_1=j)\}$. We get ranges for $p,q,t,r$ by applying Fr\'{e}chet inequalities to each of the quantity.

\begin{center}
\begin{table}[h]
\scalebox{0.75}{
\begin{tabular}{r|ccc}
 & $P(Y\!=\!0\mid D\!=\!0)=0.3$ & $P(Y\!=\!1\mid D\!=\!0)=0.2$ & $P(Y\!=\!2\mid D\!=\!0)=0.5$\\[4pt]
 \hline
 &\\[-5pt]
 $P(Y\!=\!0\mid D\!=\!1)=0.7$ & $P(Y=0\mid D=1)-p-r$ & $p\in[0,0.2]$ & $r\in[0.2,0.5]$\\[4pt]
  $P(Y\!=\!1\mid D\!=\!1)=0.1$ & $P(Y=1\mid D=1)-t-q$ &$t\in[0,0.1]$ &$q\in[0,0.1]$ \\[4pt]
    $P(Y\!=\!2\mid D\!=\!1)=0.2$ & $1-(\hdots)$ &$P(Y=1\mid D=0)-t-p$ &$P(Y=2\mid D=0)-r-q$\\[4pt]
\end{tabular}
}
\caption{Application with binary treatment and ternary outcome}
\label{example_table}
\end{table}
\end{center}

Based on the bounds proposed in (\ref{eq:causal_correct_lower}) and (\ref{eq:causal_correct_upper}) , we note the following alternative expressions for $F^{L}(\delta)$ and $F^{U}(\delta)$ :
$$
\begin{aligned}
&F^{L}(\delta)=\max \left(\sup _{y}\left\{F_{1}(y)-P(Y_0<y-\delta)\right\}, 0\right); \\
&F^{U}(\delta)=1+\min \left(\inf _{y}\left\{F_{1}(y)-P(Y_0<y-\delta)\right\}, 0\right).
\end{aligned}
$$

Note that $\delta=-2$ is only possible when $Y_1=0, Y_0=2$. So this corresponds to the entry in the top right corner of Table $\ref{example_table}$. By the Fr\'{e}chet inequalities, the bounds on $P(Y_1=0, Y_0=2)$ is given by $r\in[0.2,0.5]$. Now consider $F_1(y)-P(Y_0<y-\delta)$ in our example, 

$$F_1(y)-P(Y_0<y+2)=\begin{cases}0 & y\leq-2\\-0.3 &-2< y\leq-1,\\
-0.5 &-1< y<0,\\
0.2 & y=0,\\
-0.3&0< y<1,\\
-0.2 & 1\leq y<2,\\
0 & y\geq 2.\end{cases}$$
This gives $F_\Delta(-2)\in[0.2,0.5]$, which matches the range given by Fr\'{e}chet inequalities. In this case if we consider the bounds proposed in Lemma 2.1 in \cite{fan2010sharp}, which follow from Theorem 2 of \cite{williamson1990probabilistic}, 
$$F_1(y)-F_0(y+2)=\begin{cases}0 & y<-2,\\
-0.3 &-2\leq y<-1,\\
-0.5 &-1\leq y< 0,\\
-0.3&0\leq y<1,\\
-0.2 & 1\leq y<2,\\
0 & y\geq 2.\end{cases}$$
The lower bounds for $F_\Delta(-2)$ is $0$, which is not sharp. This example corresponds to the case where $F_1$ and $F_0$ are both discontinuous at $Y_1=0$ and $Y_0=2$.

As a second illustration of Theorem \ref{causal_bounds} in a context where the Fr\'{e}chet inequalities are not directly applicable, consider the bounds for $F_\Delta(-1)$ given by Theorem \ref{causal_bounds}. The individual treatment effect is less than or equal to $-1$ when $Y_1 = 0, Y_0=1$ or $Y_1 = 1, Y_0 =2$ or $Y_1 =0, Y_0 =2$. Therefore, based on Table \ref{example_table}, $F_\Delta(-1) = p+q+r$. To calculate the bounds, consider $F_1(y)-P(Y_0<y+1)$ in our example, 

$$F_1(y)-P(Y_0<y+1)=\begin{cases}0 & y\leq-1,\\
-0.3 &-1< y<0,\\
0.4&y=0,\\
0.2&0< y<1,\\
0.3&y=1,\\
-0.2 & 1< y<2,\\
0 & y\geq 2.\end{cases}$$

This gives bounds for $F_\Delta(-1)=p+q+r\in[0.4,0.7]$. Again in this case, if we compute the lower bound based on \cite{fan2010sharp} and \cite{williamson1990probabilistic}, it is not sharp. Finally, we complete the example by obtaining the bounds $F_\Delta(0)=1.2+t-r\in[0.7,1]$, $F_\Delta(1)=1.5-p-q-t-r\in[0.8,1]$. $F_\Delta(2)=1$ follows trivially by construction.

Inference on Makarov type bounds used in causal inference can be found in \cite{fan2010sharp} and is discussed in \cite{fan2012confidence} and \cite{imbens2018causal}.

\section{Acknowledgments}
We thank Carlos Cinelli, Yanqin Fan, Giovanni Puccetti, Ludger Rüschendorf, James M. Robins  and the audience of the UW causal reading group as well as the 2024 ACIC poster session for valuable input and discussion.

\newpage
\singlespacing
\bibliography{Bounds_on_the_Distribution_of_a_Sum_of_Two_Random_Variables_arxiv.bib}

\newpage
\doublespacing
\begin{appendix}

\section{Table on prior work and relation to this paper} \label{app:table_prior}
\begin{table}[h]



\begin{tabular}{l c c c c c c}
\hline
& {Def.~of} & {Def.~of} & \multicolumn{2}{c}{Lower Bound on} & \multicolumn{2}{c}{Upper Bound on} \\
& cdf & Sharpness & $P(Z < z)$ & $P(Z \leq z)$ & $P(Z < z)$ & $P(Z \leq z)$ \\
\hline
\hline
\\[-4pt]
\cite{makarov1982estimates} & $\widetilde{F}$ & $\sup/\inf$  & \checkmark & \checkmark & \checkmark & \checkmark \\[2pt]
\cite{ruschendorf1982random}  & $F$ & attainable & \checkmark   &  & & \checkmark \\[2pt]
\cite{frank1987best}& $\widetilde{F}$ & attainable & \checkmark & &  & \checkmark \\[2pt]
\cite{nelsen2006introduction}   & $F$ & attainable & \checkmark & &  & \checkmark \\[6pt]
\multirow{2}{*}{This paper}    & \multirow{2}{*}{$F$} & sup/inf & \checkmark & \checkmark  & \checkmark & \checkmark \\
&& attainable & \checkmark &
Thm.~\ref{thm:fns_exten_low} & Thm.~\ref{thm:fns_exten_up} & \checkmark\\
\hline
\end{tabular}

\caption{Concordance with prior work illustrating the different definitions and bounds provided in relation to the results in this paper. ‘sup/inf’ indicates that the given bound is the best‐possible, i.e. sup or inf over all joint distributions consistent with the prescribed marginals, whereas ‘attainable’ means there is at least one joint distribution achieving that bound. A $\checkmark$ means the property (best‐possible or attainability) holds for all choices of marginals, a blank means it may fail for some marginals, and Theorems~\ref{thm:fns_exten_low}-\ref{thm:fns_exten_up} provide conditions ensuring these bounds are indeed attained.\label{tab:literature}}    
\end{table}

\section{Relationship between bounds defined using left or right continuous cdfs}\label{app:relation_left_right_cdf_bounds}
Note that we previously defined the left‐continuous distribution functions 
$\widetilde{F}(x) := P(X < x)$ and $\widetilde{G}(y) := P(Y < y)$, 
so that indeed $\widetilde{F}(x)=F(x-)$ and $\widetilde{G}(x)=G(x-)$. 
Recall that the lower bound in our setup is given by
\[
  \tau_W(F,G)(z)\;=\;\sup_{x+y=z}\max\bigl(F(x)+G(y)-1,\,0\bigr),
\]
and the upper bound is
\[
  \rho_W(F,G)(z)\;=\;1+\inf_{x+y=z}\min\bigl(0,\;F(x)+G(y)-1\bigr).
\]
We want to prove that for fixed $z$,
\[
  \tau_W(\widetilde{F},\widetilde{G})(z-)\;\le\; 
  \tau_W(\widetilde{F},\widetilde{G})(z)\;=\;
  \tau_W(F,G)(z-)\;\le\;
  \tau_W(F,G)(z),
\]
and
\[
  \rho_W(\widetilde{F},\widetilde{G})(z-)\;\le\; 
  \rho_W(\widetilde{F},\widetilde{G})(z)\;=\;
  \rho_W(F,G)(z-)\;\le\;
  \rho_W(F,G)(z).
\]
The outer inequalities are given by monotonicity arguments. In particular, for fixed $F,G$, the map $z \mapsto \tau_W(F,G)(z)$ is non-decreasing, so $\tau_W(F,G)(z-) \le \tau_W(F,G)(z)$, and similarly for $\tau_W(\widetilde{F},\widetilde{G})$, $\rho_W(F,G)$, and $\rho_W(\widetilde{F},\widetilde{G})$. It remains to show 
\[
  \tau_W(\widetilde{F},\widetilde{G})(z) = \tau_W(F,G)(z-)
  \quad\text{and}\quad 
  \rho_W(\widetilde{F},\widetilde{G})(z) = \rho_W(F,G)(z-).
\] 
First, we focus on $\tau_W(\widetilde{F},\widetilde{G})(z)= \tau_W(F,G)(z-)$. Based on the definition of $\tau_W$, we want to show that
\[\sup_{x+y=z}\max(\widetilde{F}(x)+\widetilde{G}(y)-1,0)=\sup_{x+y=z}\max(F(x-)+G(y-)-1,0)=\sup_{x+y=z-}\max(F(x)+G(y)-1,0)\]
Because adding a constant and taking max with $0$ does not change equality, it suffices to show
\[\sup_{x+y=z}(F(x-)+G(y-))=\sup_{x+y=z-}(F(x)+G(y)).\]
We will prove this equality by showing two inequalities. First, we have
\begin{align}
    \sup_{x+y=z-}(F(x)+G(y)) & := \lim_{h^*\to 0, h^*>0}\sup_{x+y=z-h^*}(F(x)+G(y))\\
    & = \lim_{h^*\to 0, h^*>0}\sup_{x+y=z}(F(x-\frac{h^*}{2})+G(y-\frac{h^*}{2})) .\label{eq:lim_h_sup_z}
\end{align}
Since $F,G$ are non-decreasing, for all $x,y$ and $h^*>0$,
\[F(x-\frac{h^*}{2})+G(y-\frac{h^*}{2})\leq \lim_{h\to 0, h>0}(F(x-\frac{h}{2})+G(y-\frac{h}{2})).\]
Taking the sup on both sides gives:
\[\sup_{x+y=z}F(x-\frac{h^*}{2})+G(y-\frac{h^*}{2})\leq \sup_{x+y=z}\lim_{h\to 0, h>0}(F(x-\frac{h}{2})+G(y-\frac{h}{2})).\]
Therefore, 
\begin{align*}
    \sup_{x+y=z-}(F(x)+G(y)) 
     = \lim_{h^*\to 0, h^*>0}\sup_{x+y=z}(F(x-\frac{h^*}{2})+G(y-\frac{h^*}{2})) 
      \leq  \sup_{x+y=z}(F(x-)+G(y-))
\end{align*}
For the second direction, 
  since \(F\) and \(G\) are non-decreasing, for any \(\epsilon > 0\), there exists \(h^* > 0\) such that for all $h<h^*$:
  \begin{align*}
        F(x - \frac{h}{2}) \geq F(x-) - \frac{\epsilon}{2}, \quad G(y - \frac{h}{2}) \geq G(y-) - \frac{\epsilon}{2}.
  \end{align*}
  This implies for all $h<h^*$:
  \begin{align}
  F(x - \frac{h}{2}) + G(y - \frac{h}{2}) \geq F(x-) + G(y-) - \epsilon.\label{eq:-h_greater_epsilon}
  \end{align}
  Taking the supremum over \((x, y)\) gives:
  \[
  \sup_{x+y=z} (F(x - \frac{h}{2}) + G(y - \frac{h}{2})) \geq \sup_{{x+y=z}} (F(x-) + G(y-)) - \epsilon
  \]
  for all $h<h^*$. The \(\lim_{h \downarrow 0}\) of the left-hand side (which is equal to (\ref{eq:lim_h_sup_z})) is at least the right-hand side. Since $\epsilon>0$ was arbitrary, this gives
  \[\sup_{x+y=z-}(F(x)+G(y))\geq \sup_{{x+y=z}} (F(x-) + G(y-)).\]
Therefore, we have
$$\sup_{x+y=z-}(F(x)+G(y))=\sup_{x+y=z}(F(x-)+G(y-)).$$

A completely analogous argument applies to the upper bound $\rho_W$, just replacing the `$\sup$' by `$\inf$'. We want to prove that 
\[\inf_{x+y=z}\min(1,F(x-)+G(y-))=\inf_{x+y=z-}\min(1,F(x)+G(y)).\]
Again it suffices to show that
\[\inf_{x+y=z}(F(x-)+G(y-))=\inf_{x+y=z-}(F(x)+G(y)).\]
Since $F,G$ are non-decreasing, we note that 
\[\inf_{x+y=z}F(x-\frac{h^*}{2})+G(y-\frac{h^*}{2})\leq \inf_{x+y=z}\lim_{h\to 0, h>0}(F(x-\frac{h}{2})+G(y-\frac{h}{2})).\]
Therefore, 
\begin{align*}
    \inf_{x+y=z-}(F(x)+G(y)) 
     = \lim_{h^*\to 0, h^*>0}\inf_{x+y=z}(F(x-\frac{h^*}{2})+G(y-\frac{h^*}{2})) 
      \leq  \inf_{x+y=z}(F(x-)+G(y-)).
\end{align*}
Using equation (\ref{eq:-h_greater_epsilon}), and taking the infimum over \((x, y)\) gives:
  \[
  \inf_{x+y=z} (F(x - \frac{h}{2}) + G(y - \frac{h}{2})) \geq \inf_{{x+y=z}} (F(x-) + G(y-)) - \epsilon
  \]
  for all $h<h^*$. The \(\lim_{h \downarrow 0}\) of the left-hand side is at least the right-hand side. Since $\epsilon>0$ was arbitrary, this gives
  \[\inf_{x+y=z-}(F(x)+G(y))\geq \inf_{{x+y=z}} (F(x-) + G(y-)),\]
and
$$\inf_{x+y=z-}(F(x)+G(y))=\inf_{x+y=z}(F(x-)+G(y-)).$$

\section{R{\"u}schendorf's approach to achievability of the bounds}\label{App:C}

\cite{puccetti2012computation} comment that a general result in \cite{ruschendorf1983solution} implies the achievability of the infimum and supremum of functionals (lower bound on $P(X+Y<z)$ and upper bound on $P(X+Y\leq z)$). Here for the sake of completeness, we give the argument in detail. This provides a (non-constructive) proof of the result in \cite{frank1987best}. To prove the achievability of the bounds, we start with a definition.
\begin{definition}
    A function \( \varphi: \mathbb{R}^2 \to \mathbb{R} \) is lower semicontinuous if, for all \( (x,y) \in \mathbb{R}^2 \),
\[
\liminf_{(x',y') \to (x,y)} \varphi(x', y') \geq \varphi(x, y).
\]
\end{definition}
Given marginal distribution functions \( F \) and \( G \) for random variables \( X \) and \( Y \) respectively, let \( \mathcal{M}(F,G) \) denote the set of all joint distribution functions on \( (X,Y) \) that have the given marginals. \cite{ruschendorf1983solution} proved that the set \( \mathcal{M}(F,G) \) is convex, tight, and closed. Therefore, by Prokhorov's theorem \citep{prokhorov1956convergence}, \( \mathcal{M}(F,G) \) is compact with respect to the weak topology.

\begin{proposition}\label{prop:achieve_minim_lower_semi}
    For a measurable function \( \varphi: \mathbb{R}^2 \to \mathbb{R} \) and the compact set \( \mathcal{M}(F,G) \),
    \[
    \inf_{H \in \mathcal{M}(F,G)} \int \varphi \, dH
    \]
    achieves its minimum in \( \mathcal{M}(F,G) \) when \( \varphi \) is lower semicontinuous.
\end{proposition}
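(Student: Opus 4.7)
The plan is to argue by a standard compactness-plus-lower-semicontinuity argument: a lower semicontinuous functional on a compact (topological) space attains its infimum. The two ingredients are already (essentially) in place. First, $\mathcal{M}(F,G)$ is compact in the topology of weak convergence, having been shown by \cite{ruschendorf1983solution} to be tight and closed, to which Prokhorov's theorem applies. Second, I need the functional $\Phi \colon H \mapsto \int \varphi\, dH$ to be lower semicontinuous on $\mathcal{M}(F,G)$ with respect to weak convergence.

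For the second ingredient, I would invoke the Portmanteau theorem, in the form: if $H_n \Rightarrow H$ weakly and $\varphi$ is bounded below and lower semicontinuous, then
\[
\liminf_{n\to\infty} \int \varphi \, dH_n \;\ge\; \int \varphi \, dH.
\]
In the application relevant to this paper (bounding $P(X+Y < z)$ from below), $\varphi$ is the indicator of the open set $\{(x,y) : x + y < z\}$, which is indeed bounded and lower semicontinuous, so Portmanteau applies directly. For the general statement, boundedness below of $\varphi$ suffices; one can reduce to the bounded case by truncating $\varphi$ at $N$ and sending $N\to\infty$ using monotone convergence, if needed.

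With these two pieces, the argument is routine. Set $m := \inf_{H \in \mathcal{M}(F,G)} \int \varphi \, dH$ and choose a minimizing sequence $H_n \in \mathcal{M}(F,G)$ with $\int \varphi \, dH_n \to m$. By compactness of $\mathcal{M}(F,G)$, extract a subsequence $H_{n_k}$ converging weakly to some $H^\star$; by closedness of $\mathcal{M}(F,G)$ in the weak topology, $H^\star \in \mathcal{M}(F,G)$. By the lower semicontinuity of $\Phi$ just discussed,
\[
\int \varphi \, dH^\star \;\le\; \liminf_{k\to\infty} \int \varphi \, dH_{n_k} \;=\; m.
\]
Combined with the reverse inequality $\int \varphi \, dH^\star \ge m$ that holds because $H^\star \in \mathcal{M}(F,G)$, this gives $\int \varphi \, dH^\star = m$, so the infimum is attained.

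The only subtle point I would pay attention to is ensuring that Portmanteau is invoked under a hypothesis that actually holds for $\varphi$. For the unbounded lower semicontinuous case, one writes $\varphi = \sup_N \varphi_N$ where $\varphi_N := \min(\varphi, N)$ is bounded and lower semicontinuous, applies Portmanteau to each $\varphi_N$, and then passes $N \to \infty$ via monotone convergence; I do not expect any genuine difficulty here. No other step poses an obstacle: the compactness of $\mathcal{M}(F,G)$ is already supplied, and closedness under weak limits is exactly the statement that the marginal constraints $\int f(x)\, dH = \int f\, dF$ and $\int g(y)\, dH = \int g\, dG$ (for bounded continuous $f,g$) are preserved along weakly convergent sequences.
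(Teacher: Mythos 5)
Your proposal is correct and follows essentially the same route as the paper: compactness of $\mathcal{M}(F,G)$ via Prokhorov, lower semicontinuity of $H \mapsto \int \varphi\, dH$ via Portmanteau, and attainment of the infimum of a lower semicontinuous functional on a compact set. Your added care about boundedness below and the truncation argument for unbounded $\varphi$ is a reasonable refinement but does not change the substance of the argument.
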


\begin{proof}
Since \( \mathcal{M}(F,G) \) is compact with respect to the weak topology and \( \varphi \) is lower semicontinuous, we can apply the Portmanteau theorem. Specifically, for any sequence \( \{H_n\} \subset \mathcal{M}(F,G) \) that converges weakly to some \( H \in \mathcal{M}(F,G) \), we have
\[
\int \varphi \, dH \leq \liminf_{n \to \infty} \int \varphi \, dH_n.
\]
This means the mapping \( H \mapsto \int \varphi \, dH \) is lower semicontinuous on \( \mathcal{M}(F,G) \). Since the infimum of a lower semicontinuous function on a compact set is attained, there exists \( H^* \in \mathcal{M}(F,G) \) such that
\[
\inf_{H \in \mathcal{M}(F,G)} \int \varphi \, dH = \int \varphi \, dH^*.
\]
\end{proof}

\begin{proposition}
    For any given \( z \in \mathbb{R} \), the function \( \varphi(X,Y) = \mathbbm{1}_{\{X+Y<z\}} \) is lower semicontinuous.
\end{proposition}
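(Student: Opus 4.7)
\medskip
\noindent\textbf{Proof plan.} The plan is to exploit the standard equivalence between lower semicontinuity of an indicator function and openness of the underlying set. Specifically, for a set $A\subseteq \mathbb{R}^2$, the indicator $\mathbbm{1}_A$ is lower semicontinuous if and only if $A$ is open: at a point in $A$, values of $\mathbbm{1}_A$ on a neighborhood are all $1$ (provided $A$ is open), matching $\mathbbm{1}_A = 1$; at a point outside $A$, one has $\mathbbm{1}_A = 0$, and the $\liminf$ is automatically $\geq 0$ since $\mathbbm{1}_A$ is non-negative.

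\medskip
\noindent First I would observe that the set $A_z := \{(x,y)\in\mathbb{R}^2 : x+y<z\}$ is the preimage of the open half-line $(-\infty,z)$ under the continuous addition map $s(x,y) = x+y$, and hence is open in $\mathbb{R}^2$. Then I would verify the two cases of the definition directly. If $(x_0,y_0)\in A_z$, openness of $A_z$ yields a neighborhood $U$ of $(x_0,y_0)$ with $U\subseteq A_z$, so $\varphi(x',y') = 1$ for all $(x',y')\in U$, and therefore $\liminf_{(x',y')\to(x_0,y_0)} \varphi(x',y') = 1 = \varphi(x_0,y_0)$. If $(x_0,y_0)\notin A_z$, then $\varphi(x_0,y_0) = 0$, and since $\varphi$ takes only the values $0$ and $1$, trivially $\liminf_{(x',y')\to(x_0,y_0)} \varphi(x',y') \geq 0 = \varphi(x_0,y_0)$.

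\medskip
\noindent There is no serious obstacle: the argument is essentially a one-line observation, relying only on continuity of addition and the elementary characterization of lower semicontinuous indicator functions. The only subtlety worth flagging is why the statement uses the strict inequality $X+Y<z$ rather than $X+Y\leq z$: the latter set $\{x+y\leq z\}$ is closed rather than open, so its indicator is upper semicontinuous (not lower semicontinuous), which is precisely why R\"uschendorf's framework yields achievability of the infimum of $P(X+Y<z)$ and, by symmetry applied to the complement, achievability of the supremum of $P(X+Y\leq z)$, matching the pattern of Theorems \ref{thm:fns_exten_low} and \ref{thm:fns_exten_up}.
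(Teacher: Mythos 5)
Your proof is correct and follows essentially the same argument as the paper's: the paper's Case 1 ("for sufficiently large $n$, $x_n + y_n < z$") is precisely the openness of $\{x+y<z\}$ that you invoke explicitly, and Case 2 is the same trivial nonnegativity observation. The only difference is presentational — you package the argument as "indicator of an open set is lower semicontinuous" rather than working sequence-by-sequence.
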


\begin{proof}
Consider the function \( \varphi(x,y) = \mathbbm{1}_{\{x + y < z\}} \). 

\textit{Case 1}: If \( x + y < z \), then \( \varphi(x,y) = 1 \). For any sequence \( (x_n, y_n) \to (x,y) \), we have \( x_n + y_n < z \) for sufficiently large \( n \). Thus, \( \varphi(x_n, y_n) = 1 \) eventually, and
\[
\liminf_{(x', y') \to (x,y)} \varphi(x', y') = 1 = \varphi(x, y).
\]

\textit{Case 2}: If \( x + y \geq z \), then \( \varphi(x,y) = 0 \). For any sequence \( (x_n, y_n) \to (x,y) \), \( \varphi(x_n, y_n) \geq 0 = \varphi(x,y) \). Therefore,
\[
\liminf_{(x', y') \to (x,y)} \varphi(x', y') \geq \varphi(x, y).
\]

In both cases, the condition for lower semicontinuity is satisfied. Hence, \( \varphi \) is lower semicontinuous.
\end{proof}

Therefore, by Proposition \ref{prop:achieve_minim_lower_semi} for all \( z \in \mathbb{R} \), there exists a joint distribution function \( H_z(x,y) \) such that \( P(X + Y < z) \) under \( H_z \) equals
\[
\inf_{H \in \mathcal{M}(F,G)} P_{H}(X + Y < z),
\]
where the infimum is taken over all joint distribution functions \( H(x,y) \) with marginals \( F(x) \) and \( G(y) \). 

An analog to Proposition \ref{prop:achieve_minim_lower_semi} will give:   $\sup_{H \in \mathcal{M}(F,G)} \int \varphi \, dH$ achieves its maximum in compact set \( \mathcal{M}(F,G) \) when $\varphi$ is upper-semicontinuous. Thus, for all \( z \), the upper bound on \( P(X + Y \leq z) \) is achievable since \( \varphi = \mathbbm{1}_{\{X+Y \leq z\}} \) is upper semicontinuous.\hfill $\Box$

\begin{remark}
    Note this does not address achievability of upper bounds on $P(X+Y<z)$ or lower bounds on $P(X+Y\leq z)$ since their functions are not respectively upper and lower semi-continuous.
\end{remark}

\section{Revisit Theorem 2 in \cite{williamson1990probabilistic}}\label{App:A}
This paper was in part motivated by the observation that authors were using the lower bound given in \cite{williamson1990probabilistic}
 that were claimed to be sharp when this was clearly not true. Here we identify the error present in \cite{williamson1990probabilistic}'s proof. The proof of Theorem $2$ of \cite{williamson1990probabilistic} states ``Let $Y'=-Y$. Then $F_{Y'}(y)=1-F(-y)$". 
Since \cite{williamson1990probabilistic} use the left-continuous version definition of cdf, in our notation, 
  \begin{align*}\Tilde{G'}(y)&=P(Y'< y)\\
    &=P(-Y< y)\\
&=P(Y> -y)\\
&=1-P(Y\leq -y )\\
&=1-\Tilde{G}(-y) - \textcolor{red}{P(Y = -y)},\end{align*}
where $\Tilde{G'}(y)=P(Y'<y)$.
This statement is also not correct if we use the right-continuous version definition of cdf, as we have
    \begin{align*}G'(y)&=P(Y'\leq y)\\
    &=P(-Y\leq y)\\
&=P(Y\geq -y)\\
&=1-P(Y<-y )\\
&=1-G(-y) + \textcolor{red}{P(Y = -y)}.\end{align*}

In fact, it is easy to show that $G'(y)=1-G(-y)$ if and only if $\Tilde{G'}(y)=1-\Tilde{G}(-y)$. 

In this paper we only focus on the sum and difference of two random variables, however, a similar mistake also appears in the argument given for the bounds on the cumulative distribution function for the quotient of two random variables stated in Theorem 2 of \cite{williamson1990probabilistic}.

\section{Relating Rüschendorf and Makarov Bounds}\label{app:ruschendorf_vs_makarov}
In Section 2, Proposition 1 of \cite{ruschendorf1982random}, the upper bound \( M \) (which we denote by \(\rho_W(F,G)(z)\)) on \( A_2(t) \) (corresponding to \( P(X+Y \leq z) \) in our notation) is given by
\[
F_1 \wedge F_2(t) := \inf_x \Bigl(F_1(x-) + F_2(t-x)\Bigr),
\]
which in our notation this bound would be
\[\inf_x(F(x-)+G(z-x)) = \inf_{x+y=z}(F(x-)+G(y)).\]
In Theorem \ref{Thm:franks}, we state the upper bound as
\[
\inf_{x+y=z} \min\bigl(1, F(x)+G(y)\bigr).
\]

There are two notable differences between these formulations. First, the Makarov bound explicitly takes the minimum with 1 to ensure that the upper bound does not exceed 1, thereby avoiding a trivial bound.\footnote{The Makarov lower bound for \(P(X+Y<z)\) differs from the lower bound in \cite{ruschendorf1982random} only by taking the maximum with \(0\); we do not elaborate on this minor distinction here.}
Second, the expression in \cite{ruschendorf1982random} uses the left-hand limit \( F_1(x-) \) instead of \( F(x) \) as used in our bound \( \inf_x \bigl(F(x) + G(z-x)\bigr) \). Although it may not be immediately obvious, one can show that the two formulations are indeed equivalent.

\begin{proposition}\label{prop:eq_ru_makarov}
For any $z\in \mathbb{R}$ and distribution functions $F,G$,
    $$\inf_x(F(x)+G(z-x)) = \inf_x(F(x-)+G(z-x))$$
\end{proposition}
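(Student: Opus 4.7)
My proof plan is to establish the equality by proving two inequalities. The direction
\[
\inf_x\bigl(F(x-)+G(z-x)\bigr) \;\le\; \inf_x\bigl(F(x)+G(z-x)\bigr)
\]
is immediate because $F(x-)\le F(x)$ for every $x$ (since $F$ is non-decreasing), so the integrand on the left dominates termwise.

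For the reverse inequality, the plan is to approach each candidate $x_0$ from the left by a sequence and exploit (a) the very definition of the left-hand limit of $F$ and (b) the right-continuity of $G$. Specifically, fix any $x_0\in\mathbb{R}$ and take a sequence $x_n\uparrow x_0$ with $x_n<x_0$. Since $F$ is non-decreasing, $F(x_n)\to F(x_0-)$. Meanwhile $z-x_n\downarrow z-x_0$, and since $G$ is right-continuous (per Definition~\ref{def:cdf}), $G(z-x_n)\to G(z-x_0)$. Therefore
\[
F(x_n)+G(z-x_n)\;\longrightarrow\;F(x_0-)+G(z-x_0),
\]
which gives $\inf_x\bigl(F(x)+G(z-x)\bigr)\le F(x_0-)+G(z-x_0)$. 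Taking the infimum over $x_0$ on the right yields
\[
\inf_x\bigl(F(x)+G(z-x)\bigr) \;\le\; \inf_x\bigl(F(x-)+G(z-x)\bigr).
\]

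Combining the two inequalities proves the claim. The only subtle point --- and the ``main obstacle'' if one can call it that --- is ensuring that the $G$-term behaves correctly as $x_n\uparrow x_0$: because $x\mapsto z-x$ reverses direction, the relevant limit of $G$ is a \emph{right-hand} limit in its argument, which is exactly where the right-continuity convention of Definition~\ref{def:cdf} is needed. Had $G$ been defined as left-continuous (as in \cite{ruschendorf1982random}), the analogous argument would instead use $x_n\downarrow x_0$, which explains why R\"uschendorf's left-continuous formulation with $F_1(x-)$ matches our right-continuous formulation with $F(x)$ --- the two bounds differ only by a ``side swap'' that the infimum absorbs.
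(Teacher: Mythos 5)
Your proof is correct and follows essentially the same route as the paper's: the easy direction from $F(x-)\le F(x)$, and the reverse direction by taking $x_n\uparrow x_0$ so that $F(x_n)\to F(x_0-)$ while $G(z-x_n)\to G(z-x_0)$ by right-continuity of $G$. The closing remark about the ``side swap'' with left-continuous conventions is a nice observation but not part of the required argument.
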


\begin{proof}
We will prove this equality by showing two inequalities.

Since $F$ is a distribution function, $F(x-) \leq F(x)$ for all $x$, thus
\[
F(x-) + G(z-x) \leq F(x) + G(z-x).
\]
Taking the infimum over all $x$ preserves the inequality:
\[
\inf_x (F(x-) + G(z-x)) \leq \inf_x (F(x) + G(z-x)).
\]

For the second direction, consider a point $x$ and a sequence $\{x_n\}_{n\in\mathbb{N}}$ such that $x_n \uparrow x$ (i.e., $x_n < x$ for all $n$ and $\lim_{n\to\infty} x_n = x$) and $\lim_{n\to\infty} F(x_n) = F(x-)$.
Consider the expression $F(x_n) + G(z - x_n)$. By definition of infimum, for all $n$,
    \[
    \inf_{x^*} (F(x^*) + G(z - x^*)) \leq F(x_n) + G(z - x_n).
    \]
Taking the limit as $n \to \infty$, we get
\[
 \inf_{x^*} (F(x^*) + G(z - x^*)) \leq F(x-) + \lim_{n\to\infty} G(z - x_n).
\]
As $z-x_n$ converges to $z-x$ from the right, we have $\lim_{n \to \infty} G(z-x_n) = G(z-x)$ by right continuity of $G$.
Therefore,
\[
\inf_{x^*} (F(x^*) + G(z - x^*)) \leq F(x-) + G(z-x).
\]
Taking the infimum over $x$ on the right side yields
\[
\inf_{x^*} (F(x^*) + G(z - x^*)) \leq \inf_x (F(x-) + G(z-x)).
\]
Therefore, we have:
\[
\inf_x(F(x)+G(z-x)) = \inf_x(F(x-)+G(z-x)).
\]
where the right hand side here is the upper bound in \cite{ruschendorf1982random} in our notation.
\end{proof}

In fact, there is another equivalent way to represent this upper bound as we will show next.
\begin{corollary}\label{claim:y_y_minus}
For any $z\in \mathbb{R}$ and distribution functions $F,G$,
    $$\inf_{x+y=z}(F(x)+G(y)) = \inf_{x+y=z}(F(x)+G(y-)) = \inf_y(F(z-y)+G(y-)).$$
\end{corollary}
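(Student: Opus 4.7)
The proof is essentially a symmetry argument on top of Proposition \ref{prop:eq_ru_makarov}. My plan is to first reduce the three-way equality to a single non-trivial equality, and then invoke Proposition \ref{prop:eq_ru_makarov} with the roles of $F$ and $G$ swapped.

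First I would observe that the constraint $x+y=z$ is equivalent to $x=z-y$, so $\inf_{x+y=z}(F(x)+G(y-))$ and $\inf_y(F(z-y)+G(y-))$ are literally the same quantity after the substitution $x\mapsto z-y$. Thus the second equality in the statement is just a notational reparametrization, and the real content is the first equality
\[
\inf_{x+y=z}\bigl(F(x)+G(y)\bigr) \;=\; \inf_{x+y=z}\bigl(F(x)+G(y-)\bigr).
\]

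Next I would rewrite both sides in the ``$\inf_y$'' form via the substitution $x=z-y$, obtaining
\[
\inf_y\bigl(F(z-y)+G(y)\bigr) \;=\; \inf_y\bigl(F(z-y)+G(y-)\bigr).
\]
Now apply Proposition \ref{prop:eq_ru_makarov} with the roles of $F$ and $G$ interchanged and the dummy variable relabelled $x\mapsto y$: that proposition, stated as
\[
\inf_x\bigl(F(x)+G(z-x)\bigr)=\inf_x\bigl(F(x-)+G(z-x)\bigr),
\]
becomes, under the swap $(F,G)\mapsto(G,F)$ and relabelling,
\[
\inf_y\bigl(G(y)+F(z-y)\bigr)=\inf_y\bigl(G(y-)+F(z-y)\bigr),
\]
which is exactly what we need.

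There is no genuine obstacle here: Proposition \ref{prop:eq_ru_makarov} already absorbs the only delicate step, namely the use of monotonicity and right-continuity to push $F(x)$ down to $F(x-)$ inside an infimum. The only thing to be careful about is to make clear that the proposition is stated symmetrically in $F$ and $G$ (both are assumed merely to be distribution functions), so the swap is legitimate, and that the third expression in the corollary is obtained by a trivial change of variable rather than any additional argument.
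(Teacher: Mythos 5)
Your proof is correct and follows essentially the same route as the paper's: the paper likewise derives the corollary from Proposition \ref{prop:eq_ru_makarov} by exploiting the symmetry of $F$ and $G$ in the infimum and exchanging $x$ and $y$. Your write-up simply makes explicit the variable relabelling and the observation that the third expression is a trivial reparametrization, which the paper leaves implicit.
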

\begin{proof}
    This follows from the symmetry of $F,G$ in the infimum expression and we can exchange $x,y$ in the sum.
\end{proof}

Note that the conclusions in Proposition~\ref{prop:eq_ru_makarov} and Corollary \ref{claim:y_y_minus} do not hold when the $\inf$ operators are replaced with $\sup$ operators. In addition, Corollary~\ref{claim:y_y_minus} provides intuition to prove Theorem~\ref{upper:equal}, as detailed in Appendix \ref{App:B}.
    

\section{Proof of Theorem \ref{upper:equal}}\label{App:B}
Here we prove, as stated in Theorem \ref{upper:equal}, that the upper bound on the cdf for the difference $\Delta=X-Y$ given by \cite{williamson1990probabilistic} is valid even though the proof in \cite{williamson1990probabilistic} is not correct. 

It is sufficient to show that for any random variables $X$ and $Y$ with respective cdfs $F(\cdot)$ and $G(\cdot)$,
\begin{align}
    \inf_{x-y=\delta} \min\{F(x)-P(Y<y),0\}
&=  \inf_{x-y=\delta} \min\{F(x)-G(y),0\}.\label{eq:inf_upperbounds_equal}
\end{align}
Recall that Proposition~\ref{claim:y_y_minus} shows    
\[
    \inf_{x+y=z}(F(x)+G(y)) = \inf_{x+y=z}(F(x)+G(y-)).
\]
Consider a new variable $Y'=-Y$ with cdf $G'$. Then for any $\delta$, applying Proposition~\ref{claim:y_y_minus} we get:
\[
    \inf_{x+y'=\delta}(F(x)+G'(y')) = \inf_{x+y'=\delta}(F(x)+G'(y'-)).
\]
For $y'=-y$, we see that 
\begin{align*}G'(y'-)&=P(Y'<y')\\
&=P(-Y< y')\\
&=P(Y> -y')\\
&=1-P(Y\leq -y' )\\
&=1-G(-y') \\
&= 1-G(y).\end{align*}
And similarly $G'(y')=P(-Y\leq y')=P(Y\geq -y')=1-P(Y< -y' ) = 1-P(Y<y)$, so 
\[
    \inf_{x-y=\delta}(F(x)-P(Y<y)+1) = \inf_{x-y=\delta}(F(x)+G(y)+1).
\]
Because adding or subtracting a constant and taking minimum do not affect the relevant infimum.
Thus, the two expressions in (\ref{eq:inf_upperbounds_equal}) coincide.

\section{Simulation under the special copula}\label{app:sim_copula}
We note that the special copula $C_t$ which achieves the lower bound on $P(X+Y<z)$ for $t=\sup_{x+y=z} \max({F(x)+G(y)-1},0)$ in \cite{frank1987best,nelsen2006introduction} is degenerate in the sense that it does not have a density with respect to the two‐dimensional Lebesgue measure over the unit square. Recall that it is defined as:
$$
C_t(u, v)= \begin{cases}\operatorname{Max}(u+v-1, t), & (u, v) \text { in }[t, 1] \times[t, 1], \\ \operatorname{Min}(u, v), & \text { otherwise.}\end{cases}
$$
Though possibly not immediately obvious, under this copula, $V = G(Y)$ is almost surely a deterministic (piecewise-defined) function of $U=F(X)$ (and vice versa).\footnote{This does not mean $X$ is a deterministic function of $Y$ unless they are both absolutely continuous with respect to the Lebesgue measure.} Thus, we can easily simulate the joint distribution from the copula with a single draw. We introduce the following algorithm:
\begin{algorithm}[H]
\caption{Simulate Joint Distribution from the Copula $C_t$}\label{alg:copula}
\begin{algorithmic}
\State Compute $t=\sup_{x+y=z} \max({F(x)+G(y)-1},0)$
\For{$i = 1, \dots, n$}
    \State Draw $u_i \sim \mathrm{Unif}[0,1]$.
    \State Compute 
    \[
    v_i = u_i\mathbbm{1}\{u_i \leq t\} + (1+t-u_i) \mathbbm{1}\{u_i > t\}.
    \]
    \State Set $x_i = F^{-1}(u_i)$ and $y_i = G^{-1}(v_i)$, where the generalized inverses are defined in Definition~\ref{def:g_inv}.
    \State Take $(x_i,y_i)$ as a generated sample.
\EndFor
\end{algorithmic}
\end{algorithm}

Similarly, the joint distribution under copula $C_r$, which achieves the upper bound on $P(X+Y\leq z)$ with $r=\inf_{x+y=z} \min\bigl(1, F(x)+G(y)\bigr)$ is defined as:
$$
C_r(u, v)= \begin{cases}\operatorname{Max}(u+v-r, 0), & (u, v) \text { in }[0, r] \times[0, r], \\ \operatorname{Min}(u, v), & \text { otherwise.}\end{cases}
$$
$C_r$ can be simulated similarly using the deterministic function $v_i = (r-u_i)\mathbbm{1}\{u_i \leq r\} +  u_i \mathbbm{1}\{u_i > r\} $. Here, copulas $C_t$ and $C_r$ are ordinal sums \citep{mesiar2010ordinal, nelsen2006introduction} of the singular copulas $W(u,v)=\max(u+v-1,0)$ and $M(u,v)=\min(u,v)$ corresponding to the Fréchet-Hoeffding lower and upper bounds. 

As noted in \cite{frank1987best}, copulas that achieve the bounds in certain scenarios are not unique. For example, $\operatorname{Min}(u, v)$ in $[0,t]\times[0,t]$ of $C_t$ can be replaced by $uv$. Similarly, $\operatorname{Min}(u, v)$ in $[r,1]\times[r,1]$ of $C_r$ can also be replaced by $uv$.
However, each of these copulas is an ordinal sum of two copulas, with at least one being degenerate. As a result, the induced joint distribution may fail to be continuous—even when the marginal distributions are continuous. Consequently, the Makarov bounds need not always be attainable under these copulas (see Theorem~\ref{thm:sum_mak}). In contrast, when at least one of the marginals is discrete, the Makarov bounds are always attained (see Theorem~\ref{thm:discrete_ach}).
\end{appendix}

\end{document}